\documentclass{article}

\usepackage{arxiv}
\usepackage{enumitem}
\usepackage[utf8]{inputenc}
\usepackage[T1]{fontenc}
\usepackage{hyperref}
\usepackage{url}
\usepackage{booktabs}
\usepackage{algorithm}
\usepackage{algorithmic}
\usepackage{amsmath}
\usepackage{amsfonts}
\usepackage{amssymb}
\usepackage{amsthm}
\usepackage{nicefrac}
\usepackage{microtype}
\usepackage{fancyhdr}
\usepackage{graphicx}
\usepackage{xcolor}
\usepackage{tikz}
\usetikzlibrary{arrows.meta,calc}
\usepackage{subcaption}
\usepackage{bbm}
\usepackage{lyu}

\newif\ifusesupplement\usesupplementfalse

\newtheorem{theorem}{Theorem}[section]
\newtheorem{corollary}{Corollary}[theorem]
\newtheorem{lemma}[theorem]{Lemma}
\newtheorem{proposition}[theorem]{Proposition}
\theoremstyle{definition}

\newtheorem{assumption}{Assumption}
\theoremstyle{remark}

\graphicspath{{../figure/}{./figure/}}
\title{Wasserstein Moment Nudging for Vlasov–Poisson Data Assimilation}

\author{
  Liyao Lyu\thanks{Equal contribution.}\\
  Department of Mathematics \\
  University of California, Los Angeles \\
  Los Angeles, CA 90095, USA\\
  \texttt{lyuliyao@math.ucla.edu} \\
  \AND
   Xinyue Yu\footnotemark[1] \\
  Department of Mathematics \\
  University of California, Los Angeles \\
  Los Angeles, CA 90095, USA\\
  \texttt{tracy@math.ucla.edu}\\ \AND David Schneidinger \\ Department of Physics \& Astronomy \\ University of California, Los Angeles \\ Los Angeles, CA 90095, USA\\ \texttt{dschneidinger@ucla.edu} \\
  \AND
   Hayden Schaeffer  \\
  Department of Mathematics \\
  University of California, Los Angeles \\
  Los Angeles, CA 90095, USA\\
  \texttt{hayden@math.ucla.edu} \\
}

\begin{document}
\maketitle

\begin{abstract}
We introduce a continuous data assimilation method for particle-in-cell
simulations of the Vlasov-Poisson equation when only hydrodynamic moments are
observed. The forecast state is an empirical measure on phase space, whereas
the observed fields (density, bulk velocity, and temperature) constrain only
a few velocity moments and leave the velocity-space shape of the distribution
undetermined. We construct the moment feedback as a Wasserstein gradient flow
of a moment-mismatch functional over phase-space measures. The resulting drift
acts directly on particle positions and velocities, couples the density,
momentum, and energy residuals through a single variational structure, and
vanishes on the entire moment-compatible set.  Under the standard Wasserstein metric, the energy residual produces a position correction that grows quadratically with the particle
speed, and the particle system falls outside standard well-posedness theory.
Our primary formulation pairs the quadratic moment mismatch with a
velocity-weighted Wasserstein metric that penalizes spatial transport at large peculiar velocity relative to the observed bulk flow, which removes this growth. A direction-split variant retains the plain metric instead. Under the same
weighted metric, an alternative moment-relative-entropy functional yields an
affine, shape-preserving velocity correction and explicit global moment balances for the space-inhomogeneous system. We prove that the finite-particle scheme with linear Lenard-Bernstein collisions is globally well posed. In 1D1V and 2D2V experiments with several collision models, the nudged formulations reduce bulk-velocity and temperature errors by up to two orders of magnitude relative to an unassimilated run.
\end{abstract}

\keywords{Vlasov--Poisson equation\and particle-in-cell methods \and continuous data assimilation\and Wasserstein gradient flow\and hydrodynamic moments}



\section{Introduction}

Kinetic simulation is central to predicting and designing plasma systems in
regimes where collective effects and phase-space structure govern the
dynamics. A canonical model is the Vlasov-Poisson equation, which evolves a charged-particle distribution function under its self-consistent electrostatic field. Forecasting realistic kinetic dynamics remains difficult for several reasons. First, the unknown is a distribution on six-dimensional phase space. Grid-based Eulerian and semi-Lagrangian solvers resolve it in low
dimensions~\cite{cheng1976integration,sonnendrucker1999semi,filbet2003comparison},
and sparse and low-rank compressions extend their
application~\cite{kormann2015semi,einkemmer2018low,guo2022low}, but resolution on a full
tensor-product grid remains prohibitive.
Particle-in-cell (PIC)
methods~\cite{verboncoeur1995object,birdsall2018plasma,hockney2021computer,dimarco2014numerical,christlieb2006grid}
avoid this cost by representing the distribution with particles and solving
the field on a mesh. Second, a simulated particle ensemble, such as one obtained from approximating the governing system~\cite{lu2019nonparametric,lu2021learning,lyu2026mvnn,chen2021data,du2022discovery,sharrock2021parameter,zhang2024bayesian, schaeffer2020extracting,liu2023random,lu2024learning, schaeffer2017learning, schaeffer2017sparse}, may deviate from the physical or high-fidelity reference system because of compounding error.
Data assimilation approaches aim to reduce this drift by incorporating observations as the simulation or model evolves~\cite{asch2016data,reich2015probabilistic,law2015data}.

In kinetic plasma problems, this assimilation task is intrinsically
multiscale~\cite{lee2015multiscale,harlim2013test,deng2025lemda,deng2024particle}. The forecast state is microscopic; a PIC simulation carries the
distribution on its particles, so the forecast is the set of particle
positions and velocities, an empirical approximation of the phase-space
distribution. The available observations are macroscopic and represent averaged or large scale behavior. Experimental
diagnostics~\cite{hutchinson2002principles} and reduced outputs of
high-fidelity simulations typically provide density, bulk velocity,
temperature, or field quantities. Thus, the model's governing equations follow a phase-space law
through particles, while the data specify only a few hydrodynamic moments.

In this paper, we extend our previous work~\cite{lyu2026multiscale} from
smoothed observations of the full phase-space law to observations of
hydrodynamic moments. These moments give a reduced order description of the law and thus leaves its velocity-space shape undetermined. For example, a two-stream distribution can
match the moments of a Maxwellian while remaining far from it in velocity
space. The observations therefore determine only a \emph{moment-compatible
set} of phase-space distributions. We design the assimilation feedback to drive the particle ensemble onto this set.

Classical data-assimilation methods do not directly provide the particle-level
feedback required in this setting. Ensemble Kalman and variational methods map observation residuals back to the coordinates of a labeled forecast
state~\cite{lorenc1986analysis,courtier1994strategy,evensen2003ensemble,evensen2009data,calvello2025ensemble}. Particle filters reweigh an ensemble of candidate forecast
states~\cite{moral2004feynman,chopin2020introduction}, and their transport and
particle-flow variants move the samples by feedback
drifts~\cite{daum2010exact,yang2011feedback,pulido2019sequential,gregory2016multilevel}.
Continuous data assimilation adds an interpolated observation residual
into the model state
equation~\cite{azouani2014continuous,bessaih2015continuous,albanez2016continuous,farhat2020data}.
By contrast, in a PIC simulation, the macro-particles are not candidate states
but quadrature points of a single phase-space distribution; the state is their
empirical measure, and the particle labels do not have physical meaning. Hydrodynamic moments are collective functionals of the ensemble, so a residual
in density, bulk velocity, and temperature does not lead to a direct particle-wise correction. Nudging the moment fields directly does fix this issue since the moment hierarchy is not closed at the observed level and thus a
moment-level scheme would require an additional closure. The assimilation
therefore needs a lift from the moment residual to a particle drift that
depends on the ensemble only through its empirical measure. The lift must
vanish on the entire moment-compatible set and provide a feedback process that relaxes the ensemble toward the correct representation. 
Constructing this lift is the core methodological problem of this paper.

Two design choices determine the lift: a moment-mismatch functional and a
transport metric on phase-space measures. The functional quantifies the
discrepancy between the predicted and the observed moments. It should be nonnegative and vanish exactly on the moment-compatible set. We choose the Wasserstein metric~\cite{jordan1998variational,ambrosio2005gradient,villani2008optimal}, where
the gradient of the functional is a phase-space velocity field that can be
evaluated at the particle positions and velocities and added directly into
the PIC simulator. We show that this construction meets both requirements, where the drift sees the particles only through the moment residuals,
which are functionals of the empirical measure, and it vanishes on the
moment-compatible set, where the residuals themselves are zero. The density,
momentum, and energy corrections thus arise as three channels of one
variational construction.

The kinetic energy is quadratic in velocity, so the spatial gradient
of the energy mismatch inherits a factor of $|\bv|^2$. The position correction then grows quadratically with the particle speed. Under the standard Wasserstein metric, this drift is not
globally Lipschitz, and the particle system falls outside classical
well-posedness theory. We therefore develop three methods, each resolving this through a different combination of mismatch functional and
metric. Method~A, our default, keeps the quadratic mismatch and its
single-functional gradient-flow structure but replaces the standard metric by a velocity-weighted Wasserstein metric that raises the cost of spatial transport for fast particles~\cite{benamou2000computational,dolbeault2009new}. The weight removes the quadratic growth and restores well-posedness. Method B keeps the standard $W_2$ geometry but splits the mismatch by phase-space direction, driving positions with the density residual and velocities with the momentum and energy residuals, so the quadratic term does not enter the position correction. Method~C replaces the quadratic mismatch by a moment relative entropy under the same weighted metric, yielding an affine, shape-preserving velocity correction and explicit global moment balances. All three compare states at the moment level and correct them at the particle level.

We establish several analytical results. First, we verify that the feedback vanishes on the moment-compatible set, and we prove a two-Maxwellian obstacle, namely, that a residual-driven feedback cannot synchronize homogeneous Maxwellian states unless the observations distinguish bulk velocity and temperature. Second, for Method~A, we prove that the moment mismatch is a Lyapunov functional of the pure feedback flow with frozen observations. Third, for Method~C, we derive
explicit balances for the mass-weighted global moments of the full
inhomogeneous Vlasov-Poisson flow. Lastly, we prove global well-posedness of
the continuous-time finite-particle scheme closed with a Lenard-Bernstein
collision surrogate, for arbitrary scaling parameters. Numerical experiments in 1D1V and
2D2V show that all three methods reduce the bulk-velocity and temperature errors by up to two orders of magnitude relative to unassimilated runs, and a two-stream experiment delimits what moment observations alone can recover.

The remainder of the paper is organized as follows. Section~\ref{sec:method} introduces the
Vlasov-Poisson-collision model, the PIC discretization, and the
collision closures used in the analysis and the experiments. Section~\ref{sec:nudging}
derives the moment-nudging methods, identifies the moment-compatible
zero set and
the global moment balances for Method~C. Section~\ref{sec:well-posed}
establishes global well-posedness of the finite-particle scheme for the
Lenard-Bernstein surrogate. Section~\ref{sec:numerics} presents the numerical experiments.
Technical proofs and implementation details can be found in the
\ifusesupplement supplementary materials\else appendices\fi.
 
\section{Vlasov-Poisson model and particle discretization}
\label{sec:method}
\subsection{Vlasov-Poisson dynamics and hydrodynamic moments}
\label{sec:VP}
We consider the Vlasov-Poisson system with a collision operator:
\begin{equation}
    \partial_t f(\bx,\bv,t)
    + \bv\cdot \nabla_\bx f
    + \bE(\bx,t)\cdot \nabla_\bv f
    = Q[f],
    \label{eq:VPC}
\end{equation}
where \(f(\bx,\bv,t)\ge 0\) denotes the electron distribution function on
\(\mathbb T^d\times \mathbb R^d\), and \(\mathbb T^d\) is a $d\mhyphen$dimensional torus. The self-consistent electric field is given by
\(\bE(\bx,t)=-\nabla_\bx \phi(\bx,t)\), where the electrostatic potential
solves
\begin{equation}
    -\Delta_\bx \phi(\bx,t) = \rho(\bx,t)-1,
    \qquad
    \rho(\bx,t) = \int_{\mathbb R^d} f(\bx,\bv,t) \diff\bv.
    \label{eq:poisson}
\end{equation}
We write
$\field{f}:=-\nabla_\bx(-\Delta_\bx)^{-1}\bigl(\densr{f}-1\bigr)$
for this self-consistent field when its dependence on $f$ matters.
The constant ion background has been normalized to one, and the potential is
fixed by the zero-mean convention. The collision operator \(Q\) is local in $\bx$ and acts only on the 
velocity dependence, that is, \(Q[f]\) evaluated at $\bx$ depends on $f$ only through $f(\bx,\cdot)$. When we require moment-conserving collisions, we
assume the system satisfies the local conservation identities
\begin{equation}
    \int_{\mathbb R^d} Q[f] \diff\bv = 0,
    \qquad
    \int_{\mathbb R^d} \bv Q[f] \diff\bv = 0,
    \qquad
    \int_{\mathbb R^d} \tfrac12 |\bv|^2 Q[f] \diff\bv = 0.
    \label{eq:Q-conservation}
\end{equation}
The integrands in \eqref{eq:Q-conservation} define the local conserved
moments of $f$, i.e. the mass, momentum, and kinetic-energy densities
\begin{equation}
    \densr{f}
    = \int_{\mathbb R^d} f \diff\bv,
    \qquad
    \momr{f}
    = \int_{\mathbb R^d} \bv\, f \diff\bv,
    \qquad
    \kinr{f}
    = \int_{\mathbb R^d} \tfrac12|\bv|^2 f \diff\bv,
    \label{eq:conserved-moments}
\end{equation}
so that $\rho=\densr{f}$ in \eqref{eq:poisson}. Wherever $\densr{f}>0$,
the associated primitive variables are the bulk velocity and temperature
\begin{align}
    \velr{f}(\bx)
    &= \frac{\momr{f}(\bx)}{\densr{f}(\bx)}
     = \frac{1}{\rho(\bx)}
       \int_{\mathbb R^d} \bv f(\bx,\bv) \diff\bv,
       \label{eq:moments}\\
    \tempr{f}(\bx)
    &= \frac{2}{d}\frac{\kinr{f}(\bx)}{\densr{f}(\bx)}
       -\frac{1}{d}\bigl|\velr{f}(\bx)\bigr|^2
     = \frac{1}{d \rho(\bx)}
       \int_{\mathbb R^d}
       |\bv-\velr{f}(\bx)|^2 f(\bx,\bv) \diff\bv .
       \label{eq:primitive-from-conserved}
\end{align}
The conserved triple $(\densr{f},\momr{f},\kinr{f})$ and the primitive
triple $(\densr{f},\velr{f},\tempr{f})$ therefore determine each other,
with the inverse relations $\momr{f}=\densr{f}\,\velr{f}$ and
$\kinr{f}=\tfrac12\densr{f}|\velr{f}|^2+\tfrac d2\densr{f}\,\tempr{f}$.
The corresponding local Maxwellian is
\begin{equation}
    \mathcal M[f](\bx,\bv)
    =
    \frac{\rho(\bx)}{(2\pi \tempr{f}(\bx))^{d/2}}
    \exp\!\left(
        -\frac{|\bv-\velr{f}(\bx)|^2}{2\tempr{f}(\bx)}
    \right).
    \label{eq:local-maxwellian}
\end{equation}
For each fixed \(\bx\), the local Maxwellian
\(\mathcal M[f](\bx,\cdot)\) is the unique minimizer of the Boltzmann
\(H\)-functional:
\[
    H(g):=\int_{\mathbb R^d} g\log g\,\diff v
\]
over the nonnegative velocity distributions \(g\) with the same mass, momentum, and kinetic energy as \(f(\bx,\cdot)\). For collision operators satisfying the corresponding \(H\)-theorem, the collision contribution to \(H\) is nonincreasing, and its entropy production vanishes precisely
at local Maxwellian equilibria
~\cite{cercignani1988boltzmann,villani2002review}.

\subsection{PIC representation and field solve}
\label{sec:PIC}
PIC methods~\cite{birdsall2018plasma,hockney2021computer} are the standard
approach to large-scale Vlasov-Poisson simulation. In a PIC approximation, the
distribution function is represented by a weighted empirical measure of
\(N_p\) macro-particles,
\begin{equation}
    f^{N_p}(\bx,\bv,t)
    =
    \sum_{i=1}^{N_p}
    w_i\,
    \delta\bigl(\bx-\bX_t^i\bigr)
    \delta\bigl(\bv-\bV_t^i\bigr),
    \qquad
    \sum_{i=1}^{N_p} w_i=|\mathbb T^d|,
    \label{eq:empirical-measure}
\end{equation}
so the spatially averaged electron density is normalized to one. 
When the system is collisionless, the particles
\(\{(\bX_t^i,\bV_t^i)\}_{i=1}^{N_p}\) follow the differential equations
\begin{align}
    \diff  \bX_t^i &= \bV_t^i \diff  t,
        \label{eq:pic-x}\\
    \diff  \bV_t^i &= \bE(\bX_t^i,t) \diff  t,
        \label{eq:pic-v}
\end{align}
where the self-consistent field \(\bE\) is obtained by solving
the Poisson equation~\eqref{eq:poisson} on a fixed spatial grid. Specifically, let
\(\{\bx_\ell\}_{\ell=1}^{N_x}\) denote the grid nodes 
\begin{equation}
    \rho(\bx_\ell,t)
    =
    \sum_{i=1}^{N_p}
    w_i\,
    S_{\Delta x}\bigl(\bx_\ell-\bX_t^i\bigr)
    =
    \frac{1}{\Delta x^d}
    \sum_{i=1}^{N_p}
    w_i\,
    W\!\left(\frac{\bx_\ell-\bX_t^i}{\Delta x}\right),
    \label{eq:deposition}
\end{equation}
where \(
    S_ {\Delta x}(\bx)=\Delta x^{-d}W(\bx/\Delta x)
\)
is the deposition kernel at scale \(\Delta x\), \(W\) is a nonnegative compactly
supported shape function with \(\int_{\mathbb R^d}W=1\), so that the
deposition conserves the total mass, and \(\Delta x\) denotes the grid
spacing. 
The discrete potential \(\phi\) is then obtained by solving the discrete
Poisson equation with right-hand side \(\rho-1\), for instance by FFT under
periodic boundary conditions. The grid field \(\bE_\ell\) is interpolated back to
the particle locations through
\begin{equation}
    \bE(\bX_t^i,t)
    \approx
    \sum_{\ell=1}^{N_x}
    \bE_\ell(t) 
    W\!\left(\frac{\bx_\ell-\bX_t^i}{\Delta x}\right).
    \label{eq:interpolation}
\end{equation}
\subsection{Collision Closures and Particle Collision Maps}
\label{sec:collisions}

The realization of the collision substep depends on the collision operator.
This choice is independent of the nudging construction, where the feedback derived below is a particle-level correction added
to the PIC update and can be composed with any collision step. Three collision closures are used: the linear Lenard-Bernstein surrogate is used in the well-posedness analysis
(\S\ref{sec:well-posed}), and the BGK and Dougherty-type closures in the
experiments (\S\ref{sec:numerics}). The collision substep acts
within fixed spatial cells, the \emph{collision cells}, which coincide with
the cells of the deposition grid of \S\ref{sec:PIC}.

\paragraph{Linear Lenard-Bernstein surrogate}
The linear Lenard-Bernstein operator~\cite{lenard1958plasma} is
\begin{equation}
    \label{eq:LB-linear}
    Q_{\mathrm{LB}}(f)
    =
    \nu\nabla_\bv\cdot\bigl(\bv f+\Theta_{\mathrm{LB}}\nabla_\bv f\bigr),
    \qquad \nu>0,\ \Theta_{\mathrm{LB}}>0.
\end{equation}
It conserves mass only; acting on the conserved moments
\eqref{eq:conserved-moments}, it relaxes momentum and energy toward a fixed
thermal bath,
\begin{equation}
    \partial_t \momr{f}\big|_{Q_{\mathrm{LB}}}
    =-\nu\,\momr{f},
    \qquad
    \partial_t \kinr{f}\big|_{Q_{\mathrm{LB}}}
    =-2\nu\,\kinr{f}+d\nu\Theta_{\mathrm{LB}}\,\densr{f},
    \label{eq:LB-defect}
\end{equation}
so it does not satisfy \eqref{eq:Q-conservation}. Its velocity-space
particle dynamics are the fixed-bath Ornstein-Uhlenbeck process
\begin{equation}
    \diff\bV_t^i
    =
    -\nu\bV_t^i \diff t
    +
    \sqrt{2\nu\Theta_{\mathrm{LB}}} \diff\bW_t^i ,
    \label{eq:LB-particle}
\end{equation}
whose exact one-step map over a time step $\Delta t$ is
\begin{equation}
    \bV^{n+1}
    =
    e^{-\nu\Delta t}\,\bV^{n}
    +\sqrt{\Theta_{\mathrm{LB}}\bigl(1-e^{-2\nu\Delta t}\bigr)}\,
     \boldsymbol\xi,
    \qquad \boldsymbol\xi\sim\mathcal N(0,I_d).
    \label{eq:LB-map}
\end{equation}
The collision operator therefore enters the particle scheme as an
additional drift-diffusion term, and the collisional system remains a
single stochastic particle system.

\paragraph{BGK relaxation}
The driven experiments of \S\ref{sec:numerics} use the BGK
operator~\cite{bhatnagar1954model}:
\begin{equation}
    \label{eq:Q-BGK}
    Q_{\mathrm{BGK}}(f)
    =
    \nu\bigl(\mathcal M[f]-f\bigr),
    \qquad \nu\ge0,
\end{equation}
where $\mathcal M[f]$ is the local Maxwellian \eqref{eq:local-maxwellian}.
Since $\mathcal M[f]$ has, by construction, the same mass, momentum, and
energy densities as $f$, the difference $\mathcal M[f]-f$ has vanishing
collision-invariant moments, and the continuum operator satisfies
\eqref{eq:Q-conservation}. Its standard particle
realization is a jump map. Over a time step $\Delta t$, each particle is
redrawn, with probability $1-e^{-\nu\Delta t}$, from the Maxwellian with the
moments of its collision cell.

\paragraph{Conservative Dougherty-type map}
The Dougherty operator~\cite{dougherty1964model} is the nonlinear
Fokker-Planck operator
\begin{equation}
    \label{eq:Q-D}
    Q_{\mathrm D}(f)
    =
    \nu \nabla_\bv\cdot
    \bigl[(\bv-\velr{f}) f+\tempr{f} \nabla_\bv f\bigr],
\end{equation}
which satisfies \eqref{eq:Q-conservation}. Our collision step realizes it in
Langevin form with per-cell conservation corrections, in the manner
of~\cite{manheimer1997langevin,lemons2009small}. It freezes the bulk velocity
and temperature $(\bu_\ell,T_\ell)$ of each collision cell, applies the exact
Ornstein-Uhlenbeck update toward the local Maxwellian,
\begin{equation}
    \widetilde{\bV}_i
    =
    \bu_\ell
    +e^{-\nu\Delta t}\bigl(\bV_i-\bu_\ell\bigr)
    +\sqrt{T_\ell\bigl(1-e^{-2\nu\Delta t}\bigr)}\,
     \boldsymbol\xi_i ,
    \label{eq:D-map}
\end{equation}
and then recenters and rescales the peculiar velocities in each cell, so that
mass, momentum, and kinetic energy are conserved exactly cell by cell.

\section{Moment nudging for Vlasov-Poisson particles}
\label{sec:nudging}
\subsection{Observation operator and compatible set}
\label{sec:obs-operator}
Consistent with the mass convention of \S\ref{sec:PIC}, all phase-space measures in this section
carry the fixed total mass $M:=|\mathbb T^d|$, and we write
\[
  \mathcal P_{2,M}
  :=
  \Bigl\{\mu\ \text{a nonnegative measure on }
  \mathbb T^d\times\mathbb R^d:
  \mu(\mathbb T^d\times\mathbb R^d)=M,\
  \textstyle\int|\bv|^2\diff\mu<\infty\Bigr\},
\]
so that the weighted empirical measure \eqref{eq:empirical-measure}
belongs to $\mathcal P_{2,M}$.
For \(\mu\in\mathcal P_{2,M}\), define
\[
  \mathcal O_h\mu
  :=
  \bigl(\rho_h[\mu],\bj_h[\mu],\mathcal K_h[\mu]\bigr),
\]
the \(K_h\)-smoothed counterparts of the conserved moments
\eqref{eq:conserved-moments},
\begin{equation}
\begin{aligned}
  \rho_h[\mu](\bx)
  &:=
  \int_{\mathbb T^d\times\mathbb R^d}
  K_h(\bx-\by)\,\mu(\diff\by,\diff\bv),\\
  \bj_h[\mu](\bx)
  &:=
  \int_{\mathbb T^d\times\mathbb R^d}
  K_h(\bx-\by)\,\bv\,\mu(\diff\by,\diff\bv),\\
  \mathcal K_h[\mu](\bx)
  &:=
  \int_{\mathbb T^d\times\mathbb R^d}
  K_h(\bx-\by)\,\frac12|\bv|^2\,
  \mu(\diff\by,\diff\bv).
\end{aligned}
\label{eq:smoothed-hydrodynamic-moments}
\end{equation}
Here $K_h(\bx)=K_h(-\bx)$ is a smooth even observation kernel at scale $h$ with
unit mass, $\int_{\mathbb T^d}K_h=1$. The subscript $h$ denotes the smoothed
moments, and when the field does not have a subscript it is a raw local moment. Whenever \(\rho_h[\mu]>0\), the smoothed primitive variables are defined
from the smoothed conserved fields by the relations
\eqref{eq:moments}-\eqref{eq:primitive-from-conserved} of \S\ref{sec:VP},
\begin{equation}
  \bu_h[\mu]
  :=
  \frac{\bj_h[\mu]}{\rho_h[\mu]},
  \qquad
  T_h[\mu]
  :=
  \frac{2}{d}\frac{\mathcal K_h[\mu]}{\rho_h[\mu]}
  -
  \frac{1}{d}|\bu_h[\mu]|^2 ,
  \label{eq:smoothed-derived-fields}
\end{equation}
so the two smoothed triples again determine each other. Because the
conserved moments depend linearly on the law \(f\), we formulate the
residuals in conserved variables. Given observed primitive fields
\((\rho^\obs,\bu^\obs,T^\obs)\), the observed conserved fields are
\(\bj^\obs:=\rho^\obs\bu^\obs\) and
\(\mathcal K^\obs:=\tfrac12\rho^\obs|\bu^\obs|^2
+\tfrac d2\rho^\obs T^\obs\), by the inverse relations of \S\ref{sec:VP}.
For a forecast law \(f\), define the moment residuals
\begin{equation}
  r_0[f]
  :=
  \rho_h[f]-\rho^\obs,
  \qquad
  \br_1[f]
  :=
  \bj_h[f]-\bj^\obs,
  \qquad
  r_2[f]
  :=
  \mathcal K_h[f]-\mathcal K^\obs .
  \label{eq:moment-residuals}
\end{equation}
The states that are indistinguishable from the observations form the smoothed
moment-compatible set
\begin{equation}
  \mathcal S_h
  :=
  \left\{
    f\in\mathcal P_{2,M}:
    \rho_h[f]=\rho^\obs,\ 
    \bj_h[f]=\bj^\obs,\ 
    \mathcal K_h[f]=\mathcal K^\obs
  \right\}.
  \label{eq:Sh-manifold}
\end{equation}
Equivalently, $\mathcal S_h$ is the zero set of $(r_0,\br_1,r_2)$, so any
feedback built from these residuals vanishes on all of $\mathcal S_h$. We
assume throughout that the observations are generated from the reference solution by the
same smoothed moment operator,
\begin{equation}
  \rho^\obs=\rho_h[f^\dagger],
  \qquad
  \bj^\obs=\bj_h[f^\dagger],
  \qquad
  \mathcal K^\obs=\mathcal K_h[f^\dagger],
  \label{eq:consistent-observations}
\end{equation}
so that the compatible set contains the true solution, $f^\dagger\in\mathcal S_h$.

\subsection{Limits of Identifiability}
\label{sec:obstruction}
The limits of identifiability under moment observations depends on the unobserved directions. The usual principle behind continuous data assimilation is that partial observations synchronize a system when the
unobserved directions are stable, dissipated, or otherwise determined by the
observed trajectory~\cite{azouani2014continuous,titi2024inadequacy}. In a
collisional kinetic equation, the intrinsic dissipation in velocity space is supplied by the collision operator, so the relevant question is which directions collisions damp and which they leave untouched.  For
moment-conserving collision models, the collision invariants are the linear
combinations of \(1,\bv,|\bv|^2\), and the zero entropy-production states are the local Maxwellians
\(\mathcal M_{\rho,\bu,T}(\bv)\)
\cite{cercignani1988boltzmann,villani2002review,golse2005boltzmann}.
Linearized around such a Maxwellian, the collision operator annihilates the
span of these invariant directions and, under the usual near-equilibrium
spectral-gap hypotheses, is coercive on the orthogonal
complement~\cite{mouhot2006explicit}. Coercivity lets collisions damp the
non-hydrodynamic velocity-shape error, while the null space leaves errors
in the hydrodynamic variables undamped.

\begin{proposition}[Two-Maxwellian obstruction]
\label{prop:obs-necessary}
On the neutral torus $\mathbb T^d\times\mathbb R^d$, consider the assimilated
Vlasov-Poisson dynamics
\begin{equation}\label{eq:assim}
    \partial_t f + \bv\cdot\nabla_\bx f + \field{f}\cdot\nabla_\bv f
    = Q[f] + \mathcal N\bigl(f,\mathcal O[f]-\mathcal O[f^\dagger]\bigr),
\end{equation}
where $\mathcal O$ is the observation map, $\mathcal N$ is the feedback
operator, which may depend on the current state $f$ but only depends on the reference
$f^\dagger$ through the observation residual, and $f^\dagger$ solves the unassimilated system
\eqref{eq:VPC}-\eqref{eq:poisson}. Assume:
\begin{enumerate}[label=\textup{(A\arabic*)}]
    \item\label{A:coll} $Q[\mathcal M_{1,\bu,T}]=0$ for every homogeneous
        Maxwellian $\mathcal M_{1,\bu,T}$;
    \item\label{A:fb} $\mathcal N(f,0)=0$ for every admissible $f$;
    \item\label{A:deg} there exist $(\bu_1,T_1)\neq(\bu_2,T_2)$ with
        \begin{equation}\label{eq:deg}
            \mathcal O[\mathcal M_{1,\bu_1,T_1}]
            = \mathcal O[\mathcal M_{1,\bu_2,T_2}];
        \end{equation}
    \item\label{A:wp} the Cauchy problems for the unassimilated and the
        assimilated equations admit unique solutions in a class containing
        the homogeneous Maxwellians.
\end{enumerate}
Let $f^\dagger(0)=\mathcal M_{1,\bu_1,T_1}$ and
$f(0)=\mathcal M_{1,\bu_2,T_2}$. Then
\[
    f(t)\equiv\mathcal M_{1,\bu_2,T_2},
    \qquad
    f^\dagger(t)\equiv\mathcal M_{1,\bu_1,T_1},
    \qquad t\ge 0;
\]
in particular,
$d\bigl(f(t),f^\dagger(t)\bigr)
=d\bigl(\mathcal M_{1,\bu_2,T_2},\mathcal M_{1,\bu_1,T_1}\bigr)$
is constant in time for every metric $d$ on phase-space densities.
\end{proposition}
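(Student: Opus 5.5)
The plan is to show that each homogeneous Maxwellian, held constant in time, is an exact solution of the relevant equation, and then to identify these constants with $f^\dagger$ and $f$ using the uniqueness assumption \ref{A:wp}. No estimates are needed; the argument only verifies that the constant profiles satisfy their equations.

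\emph{Step 1: the reference trajectory.} Set $\bar f^\dagger(t):=\mathcal M_{1,\bu_1,T_1}$ for all $t\ge0$. This state does not depend on $\bx$, so $\bv\cdot\nabla_\bx\bar f^\dagger=0$. Its density is $\densr{\bar f^\dagger}\equiv1$, since the Maxwellian has unit mass in $\bv$. The right-hand side of \eqref{eq:poisson} is therefore $\rho-1=0$. With the zero-mean convention this gives $\phi\equiv0$ and $\field{\bar f^\dagger}\equiv0$, so the force term $\field{\bar f^\dagger}\cdot\nabla_\bv\bar f^\dagger$ vanishes. Neutrality of the torus, i.e.\ the unit ion background, is exactly what is used here. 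The time derivative is zero because the state is constant. By \ref{A:coll}, $Q[\bar f^\dagger]=0$. Hence $\bar f^\dagger$ solves \eqref{eq:VPC}--\eqref{eq:poisson} with datum $\mathcal M_{1,\bu_1,T_1}$. Since homogeneous Maxwellians lie in the uniqueness class of \ref{A:wp}, we conclude $f^\dagger=\bar f^\dagger$.

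\emph{Step 2: the assimilated trajectory.} Set $\bar f(t):=\mathcal M_{1,\bu_2,T_2}$. The transport, field and collision terms of \eqref{eq:assim} vanish by the same computation as in Step 1. For the feedback term we use Step 1, which gives $f^\dagger(t)=\mathcal M_{1,\bu_1,T_1}$ for every $t$. Then \eqref{eq:deg} yields
\[
\mathcal O[\bar f(t)]-\mathcal O[f^\dagger(t)]=\mathcal O[\mathcal M_{1,\bu_2,T_2}]-\mathcal O[\mathcal M_{1,\bu_1,T_1}]=0
\]
for every $t$. By \ref{A:fb}, $\mathcal N(\bar f,0)=0$. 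So $\bar f$ solves \eqref{eq:assim} with initial datum $\mathcal M_{1,\bu_2,T_2}$, and uniqueness in \ref{A:wp} gives $f=\bar f$.

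\emph{Step 3: conclusion.} Both trajectories are constant in time, so $d(f(t),f^\dagger(t))$ is constant for any metric $d$. It is nonzero because $(\bu_1,T_1)\neq(\bu_2,T_2)$ gives distinct Maxwellians.

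The only delicate point is the coupling in Step 2. The feedback is driven by the residual along the reference trajectory, so the reference must be determined first. That is why Step 1 has to come before Step 2. Two checks are also needed: the Maxwellian must count as admissible in \ref{A:fb}, and the observation map $\mathcal O$ in \eqref{eq:deg} must be evaluated on the same time-independent states. Both follow directly from the hypotheses as stated.
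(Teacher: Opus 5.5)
Your proposal is correct and matches the paper's proof step for step. You check that the constant homogeneous Maxwellian solves the unassimilated system (the field vanishes by neutrality, transport by homogeneity, and collisions by \ref{A:coll}); uniqueness in \ref{A:wp} then identifies it with $f^\dagger$. Next, the degeneracy \eqref{eq:deg} together with \ref{A:fb} shows that the other constant Maxwellian solves \eqref{eq:assim}, and uniqueness again identifies it with $f$. Your extra remark that the constant distance is strictly positive is not part of the proposition as stated, but it is correct and is what Corollary~\ref{cor:moment-obs} relies on.
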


\begin{corollary}[Necessity of moment observability]
\label{cor:moment-obs}
Under \ref{A:coll}-\ref{A:fb} and \ref{A:wp}, a necessary condition
for the assimilated dynamics \eqref{eq:assim} to synchronize from arbitrary
Maxwellian initializations is that $\mathcal O$ separate the homogeneous
Maxwellians, i.e.\ that the map
$(\bu,T)\mapsto \mathcal O[\mathcal M_{1,\bu,T}]$ be injective. In
particular, an observation map that does not distinguish bulk velocity
and temperature on homogeneous Maxwellians cannot synchronize.
\end{corollary}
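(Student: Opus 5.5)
The statement to prove is Corollary~\ref{cor:moment-obs}. I would derive it as a direct contrapositive of Proposition~\ref{prop:obs-necessary}. Synchronization from arbitrary Maxwellian initializations means the following. For every pair of homogeneous Maxwellian data $f^\dagger(0)=\mathcal M_{1,\bu_1,T_1}$ and $f(0)=\mathcal M_{1,\bu_2,T_2}$, the assimilated solution satisfies $d\bigl(f(t),f^\dagger(t)\bigr)\to0$ as $t\to\infty$ for some metric $d$ on phase-space densities. So I assume the map $(\bu,T)\mapsto\mathcal O[\mathcal M_{1,\bu,T}]$ is not injective and produce one such pair that fails to synchronize.

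Non-injectivity supplies $(\bu_1,T_1)\neq(\bu_2,T_2)$ with equal observations. This is exactly hypothesis \ref{A:deg}. Together with the assumed \ref{A:coll}, \ref{A:fb}, \ref{A:wp}, all hypotheses of Proposition~\ref{prop:obs-necessary} then hold. For the self-containedness of the plan, the proposition's mechanism runs as follows.

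First, a homogeneous Maxwellian $\mathcal M_{1,\bu,T}$ solves the unassimilated equation stationarily. It has no $\bx$-dependence, so $\bv\cdot\nabla_\bx f=0$. Its density is $1$, so $\field{f}=0$ on the neutral torus. By \ref{A:coll} the collision term vanishes. Uniqueness in \ref{A:wp} then gives $f^\dagger(t)\equiv\mathcal M_{1,\bu_1,T_1}$.

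Second, consider the candidate $f(t)\equiv\mathcal M_{1,\bu_2,T_2}$ for the assimilated equation. The transport, field and collision terms vanish as before. The feedback argument is $\mathcal O[\mathcal M_{1,\bu_2,T_2}]-\mathcal O[\mathcal M_{1,\bu_1,T_1}]=0$ by \eqref{eq:deg}. Hence $\mathcal N$ vanishes by \ref{A:fb}, and uniqueness identifies this candidate as the solution.

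The distance between the two solutions is therefore the constant $d\bigl(\mathcal M_{1,\bu_2,T_2},\mathcal M_{1,\bu_1,T_1}\bigr)$. It is strictly positive because $d$ is a metric and the two Maxwellians differ as densities. They do differ, since a Maxwellian's mean is $\bu$ and its variance is $T$, so distinct parameters give distinct functions. The distance therefore never tends to zero, which contradicts synchronization.

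The last sentence of the corollary follows once "does not distinguish bulk velocity and temperature" is read as non-injectivity on homogeneous Maxwellians. For example, $\mathcal O$ might observe only density, or only density and momentum, so that two Maxwellians with the same bulk velocity but different temperatures are indistinguishable.

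The only delicate point is definitional. Synchronization must be fixed as convergence in some metric, and the distance must be nonzero when the parameters differ. I would state this convention explicitly at the start of the proof and treat everything else as a direct citation of Proposition~\ref{prop:obs-necessary}.
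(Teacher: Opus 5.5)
Your proposal is correct and follows the paper's proof. Non-injectivity supplies the degenerate pair of \ref{A:deg}, Proposition~\ref{prop:obs-necessary} freezes both trajectories, and the distance between them is a positive constant; your added remarks that distinct $(\bu,T)$ give distinct Maxwellians (so the constant is nonzero) and that synchronization must be fixed as metric convergence make explicit two points the paper leaves implicit.
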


The proofs are in \ref{app:proof:obs-necessary} and
\ref{app:proof:moment-obs}. The three methods below construct feedback directly from the
density, momentum, and kinetic-energy residuals in
\eqref{eq:moment-residuals}.

\subsection{Quadratic mismatch and Method A}
\label{sec:formA}

Methods~A and~B use the quadratic moment-mismatch functional:
\begin{equation}
  J_M[f]
  :=
  \frac{\gamma_1}{2}
  \|r_0[f]\|_{L^2}^2
  +
  \frac{\gamma_2}{2}
  \|\br_1[f]\|_{L^2}^2
  +
  \frac{\gamma_3}{2}
  \|r_2[f]\|_{L^2}^2,
  \qquad
  \gamma_1,\gamma_2,\gamma_3>0 .
  \label{eq:moment-loss}
\end{equation}
Since the maps
\(f\mapsto\rho_h[f]\), \(f\mapsto\bj_h[f]\), and
\(f\mapsto\mathcal K_h[f]\) are linear, and since \(K_h\) is even, the first
variation of \(J_M\) is given by
\begin{equation}
  \Phi_M[f](\bx,\bv)
  :=
  \frac{\delta J_M}{\delta f}(\bx,\bv)
  =
  \gamma_1 K_h*r_0[f](\bx)
  +
  \gamma_2 \bv\cdot K_h*\br_1[f](\bx)
  +
  \frac{\gamma_3}{2}|\bv|^2 K_h*r_2[f](\bx).
  \label{eq:moment-first-variation}
\end{equation}
 It remains to pick a descent geometry that maps \(\Phi_M[f]\) into a particle-level
correction. 

The most direct way is to set $J_M$ to be a Wasserstein gradient-flow energy and pick the descent direction as the negative phase-space gradient of $\Phi_M$. From the
Jordan-Kinderlehrer-Otto~\cite{jordan1998variational} perspective, this corresponds to the
minimizing-movement method
\[
    f_{k+1}
    =
    \operatorname*{arg min}_{f \in \mathcal P_{2,M}}
    \left\{
        J_M[f]
        +
        \frac{1}{2\tau} W_2^2(f,f_k)
    \right\} .
\]
As $\tau\to 0$, the corresponding particle descent direction is $-\nabla_{\bx,\bv}\Phi_M$, with
\[
\begin{pmatrix}
    \nabla_\bx \Phi_M\\
    \nabla_\bv \Phi_M
\end{pmatrix}
=
\begin{pmatrix}
    &\gamma_1 \nabla_\bx (K_h*r_0)
    +
    \gamma_2
    \bigl(\nabla_\bx(K_h*\br_1)\bigr)^{\!\top} \bv
    +
    \frac{\gamma_3}{2}|\bv|^2
    \nabla_\bx(K_h*r_2)\\
    &\gamma_2  K_h*\br_1
    +
    \gamma_3  \bv  K_h*r_2 .
\end{pmatrix}
\]
However, adding this descent direction directly at the particle level produces an SDE that fails the global Lipschitz assumptions underlying standard well-posedness theory. The issue is the kinetic-energy moment, since the position-component drift carries the term $\frac{\gamma_3}{2}|\bv|^2\nabla_\bx(K_h*r_2)$, which is quadratic in $\bv$ and therefore not globally Lipschitz in $(\bx,\bv)$. To resolve this within the variational framework, we modify the geometry of $\mathcal P_{2,M}$ rather than the functional $J_M$.
Recall the Benamou-Brenier dynamic formulation ~\cite{benamou2000computational}, for
measures \(f_0,f_1\in \mathcal P_{2,M}\),
\[
W_2^2(f_0,f_1)
=
\inf_{(\mu_s,\xi_x,\xi_v)}
\int_0^1\int_{\mathbb T^d\times\mathbb R^d}
\left(|\xi_x|^2+|\xi_v|^2\right) \diff\mu_s(x,v) \diff s,
\]
where the infimum is over narrowly continuous curves
\(\mu_s\in\mathcal P_{2,M}\) and velocity fields
\((\xi_x,\xi_v)\) satisfying
\[
\partial_s\mu_s+\nabla_x\cdot(\mu_s\xi_x)
+\nabla_v\cdot(\mu_s\xi_v)=0
\]
in the sense of distributions, with endpoint conditions
\(\mu_0=f_0\) and \(\mu_1=f_1\).
The formula above suggests a geometric way to control the \(|\bv|^2\)
growth, where it penalizes large velocity by increasing the cost of spatial transport. We gauge the size of a velocity by the peculiar speed relative to the
observed bulk flow, $|\bv-\bu^\obs|$, where the field
$\bu^\obs$ is supplied by the observations. The peculiar speed is Galilean invariant and on the moment-compatible set it
coincides with the intrinsic peculiar speed $|\bv-\vel{f}|$.
At each physical time \(t\), freeze the observed field
\(\bu^\obs(\cdot,t)\) and define the pointwise metric tensor
\[
    G_t(\bx,\bv)
    :=
    \operatorname{diag}\!\left(
        \left(
            1+\frac{|\bv-\bu^\obs(\bx,t)|^2}{V_*^2}
        \right)I_d, 
        I_d
    \right).
\]
Here \(V_*>0\) is a reference thermal speed, for instance chosen
from the observed temperature scale, or \(V_*=1\) in normalized
units.
For tangent velocity fields
\(
    \boldsymbol\xi=(\xi_x,\xi_v),
    \boldsymbol\eta=(\eta_x,\eta_v),
\)
we define
\[
    g_{f,t}(\boldsymbol\xi,\boldsymbol\eta)
    :=
    \int_{\mathbb T^d\times\mathbb R^d}
    \left\langle
        G_t(\bx,\bv)\boldsymbol\xi(\bx,\bv),
        \boldsymbol\eta(\bx,\bv)
    \right\rangle
    f(\bx,\bv) \diff\bx \diff\bv ,
\]
which equips \(\mathcal P_{2,M}\) with a formal time-dependent weighted
Wasserstein metric. 
Therefore, the weighted metric gradient of the moment mismatch is
\[
    \operatorname{grad}_{G_t}J_{M,t}[f]
    =
    G_t^{-1}\nabla_{\bx,\bv}\Phi_{M,t}[f],
\]
and the corresponding steepest-descent velocity is
\begin{equation}
    \begin{aligned}
    \bb_A[f](\bx,\bv,t)
    &:=
    -\operatorname{grad}_{G_t}J_{M,t}[f](\bx,\bv)
    \\[0.3em]
    &=
    -G_t^{-1}(\bx,\bv)
    \nabla_{\bx,\bv}\Phi_{M,t}[f](\bx,\bv)
    \\
    &=
    \left(
        -\frac{
            \nabla_{\bx}\Phi_{M,t}[f](\bx,\bv)
        }{
            1+|\bv-\bu^\obs(\bx,t)|^2/V_*^2
        },
        \;
        -\nabla_{\bv}\Phi_{M,t}[f](\bx,\bv)
    \right).
    \end{aligned}
    \label{eq:weighted-descent-velocity}
\end{equation}
The factor $(1+|\bv-\bu^\obs|^2/V_*^2)^{-1}$ absorbs the quadratic-in-$\bv$ growth of $\nabla_\bx\Phi_M$. In fact, since $|\bv|^2\le 2|\bv-\bu^\obs|^2+2|\bu^\obs|^2$, we have that as long as $\bu^\obs$ is bounded, the destabilizing term remains uniformly controlled.

Denoting $\bb_A=(\bb_{A,\bx},\bb_{A,\bv})$ for the two components of
\eqref{eq:weighted-descent-velocity}, the particle realization of
Method~A evolves the weighted empirical measure $\mu_t^{N_p}$ of
\eqref{eq:empirical-measure} by
\begin{align}
    \diff \bX_t^i
    &=
    \bigl[\bV_t^i
    +\bb_{A,\bx}[\mu_t^{N_p}](\bX_t^i,\bV_t^i,t)\bigr]\diff t,
    \label{eq:A-sde-x}\\
    \diff \bV_t^i
    &=
    \bigl[\field{\mu_t^{N_p}}(\bX_t^i)
    +\bb_{A,\bv}[\mu_t^{N_p}](\bX_t^i,\bV_t^i,t)\bigr]\diff t ,
    \label{eq:A-sde-v}
\end{align}
and the collision step is composed with this update, using
the particle collision maps of \S\ref{sec:collisions}.

\begin{lemma}[Identification of the moment-compatible set]
\label{lem:Sh-identification}
Let $f\in\mathcal P_{2,M}$ be absolutely continuous with respect to
$\diff\bx\diff\bv$, and denote its density again by $f$. Assume that
$\densr{f}(\bx)>0$ for almost every
$\bx\in\mathbb T^d$. Then $f\in\mathcal S_h$ if and only if
$\nabla_{\bx,\bv}\Phi_M[f]$ vanishes almost everywhere with respect to
the phase-space measure $f(\bx,\bv)\diff\bx\diff\bv$.
\end{lemma}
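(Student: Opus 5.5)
The forward implication is immediate, so the work is in the converse. It splits into three steps. First, the velocity gradient kills the momentum and energy coefficients. Second, the position gradient forces the density coefficient to be constant, and mass conservation forces that constant to be zero. Third, the smoothed residuals are recovered from their convolutions with $K_h$. Throughout I write
\[
a:=\gamma_1 K_h*r_0[f],\qquad \mathbf b:=\gamma_2 K_h*\br_1[f],\qquad c:=\tfrac{\gamma_3}{2}K_h*r_2[f].
\]
These are smooth functions of $\bx$ alone, and by \eqref{eq:moment-first-variation}
\[
\Phi_M[f]=a(\bx)+\bv\cdot\mathbf b(\bx)+c(\bx)|\bv|^2,\qquad \nabla_\bv\Phi_M=\mathbf b+2c\,\bv,\qquad \nabla_\bx\Phi_M=\nabla a+(\nabla\mathbf b)^{\top}\bv+|\bv|^2\nabla c .
\]

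\emph{Forward direction.} If $f\in\mathcal S_h$, all three residuals vanish identically. Hence $\Phi_M[f]\equiv0$ and its gradient vanishes everywhere, in particular $f$-a.e.

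\emph{Converse, step 1 (velocity channel).} Suppose $\nabla_{\bx,\bv}\Phi_M[f]=0$ for $f\,\diff\bx\diff\bv$-a.e. $(\bx,\bv)$. By Fubini, for a.e. $\bx$ the slice $f(\bx,\cdot)$ is a nonnegative integrable density with total mass $\densr{f}(\bx)>0$, and $\mathbf b(\bx)+2c(\bx)\bv=0$ for $f(\bx,\cdot)$-a.e. $\bv$. For fixed $\bx$ the zero set of the affine map $\bv\mapsto\mathbf b(\bx)+2c(\bx)\bv$ takes one of three forms:
\begin{itemize}
\item a single point, if $c(\bx)\neq0$;
\item empty, if $c(\bx)=0$ and $\mathbf b(\bx)\neq0$;
\item all of $\mathbb R^d$, if $c(\bx)=0$ and $\mathbf b(\bx)=0$.
\end{itemize}
In the first two cases the zero set is Lebesgue-null. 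An absolutely continuous measure of positive mass cannot be concentrated on such a set. Hence $c(\bx)=0$ and $\mathbf b(\bx)=0$ for a.e. $\bx$, and by continuity for every $\bx$. This is the only place where absolute continuity and $\densr{f}>0$ are used. It is also exactly where the statement would fail for empirical measures: a Dirac slice can sit at the single zero $\bv=-\mathbf b/(2c)$.

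\emph{Step 2 (position channel).} With $\mathbf b\equiv0$ and $c\equiv0$, the remaining condition is $\nabla_\bx\Phi_M=\nabla a(\bx)=0$ for $f$-a.e. $(\bx,\bv)$. Since $\densr{f}>0$ a.e., this gives $\nabla a=0$ for a.e. $\bx$. As $a$ is smooth, $a$ is a constant $C$ on the connected torus. To show $C=0$, integrate over $\mathbb T^d$ and use $\int K_h=1$:
\[
|\mathbb T^d|\,C=\gamma_1\int r_0=\gamma_1\Bigl(\int\rho_h[f]-\int\rho^\obs\Bigr)=\gamma_1(M-M)=0 .
\]
Here $\int\rho^\obs=M$ because of the consistent-observation assumption \eqref{eq:consistent-observations} and the fact that $f^\dagger$ carries mass $M$.

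\emph{Step 3 (deconvolution).} We now know $K_h*r_0=0$, $K_h*\br_1=0$ and $K_h*r_2=0$, and must conclude that the residuals themselves vanish. This is the main obstacle, because it requires the convolution $r\mapsto K_h*r$ to be injective on $L^2(\mathbb T^d)$. Equivalently, every Fourier coefficient $\widehat{K_h}(\mathbf k)$ must be nonzero. The approach is to invoke this as a standing property of the observation kernel; for instance it holds for the periodic heat kernel, or more generally for any kernel with $\widehat{K_h}>0$. Under that property the Parseval identity
\[
0=\langle r,K_h*r\rangle=\sum_{\mathbf k}\widehat{K_h}(\mathbf k)\,|\hat r(\mathbf k)|^2
\]
forces $r=0$ for each of $r_0$, the components of $\br_1$, and $r_2$. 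Hence $f\in\mathcal S_h$. If the kernel were merely even, smooth and of unit mass, the converse would hold only modulo $\ker(K_h*)$. I would therefore state the nondegeneracy of $\widehat{K_h}$ explicitly as the hypothesis under which the lemma is proved.
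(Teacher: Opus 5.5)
Your Steps~1 and~2 are sound and match the paper in substance. The paper kills $K_h*r_2$ and $K_h*\br_1$ by evaluating the affine identity at two distinct velocities in the positive-measure support of $f(\bx,\cdot)$; this is the same observation as your null-zero-set argument. It then shows that $K_h*r_0$ is a constant with zero mean.

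The gap is Step~3. As written, you do not prove the lemma as stated. You prove a weaker version under an added hypothesis $\widehat{K_h}(\mathbf k)\neq0$. Your claim that, for a merely even, smooth, unit-mass kernel, the converse holds only modulo $\ker(K_h*)$ is false under the paper's standing assumption \eqref{eq:consistent-observations}. You used that assumption in Step~2 only to get $\int\rho^\obs=M$, but it gives much more: every residual is itself a smoothed quantity,
\[
r_0=K_h*\bigl(\densr{f}-\densr{f^\dagger}\bigr),\qquad
\br_1=K_h*\bigl(\momr{f}-\momr{f^\dagger}\bigr),\qquad
r_2=K_h*\bigl(\kinr{f}-\kinr{f^\dagger}\bigr).
\]
So each residual lies in the range of $K_h*$. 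Because $K_h$ is even, $K_h*$ is self-adjoint, and its range is orthogonal to its kernel. Concretely, with $D:=\densr{f}-\densr{f^\dagger}$ and $K_h*r_0=0$,
\[
\|r_0\|_{L^2}^2=\langle K_h*D,\,r_0\rangle=\langle D,\,K_h*r_0\rangle=0 .
\]
The middle equality is Fubini plus evenness of $K_h$. The pairing makes sense because $D$ is integrable and $K_h*r_0$ is bounded. The same computation, componentwise, gives $\br_1=0$ and $r_2=0$. This is exactly how the paper concludes.

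In Fourier variables the point is this: $K_h*r_0=0$ means $\widehat{K_h}(\mathbf k)^2\widehat D(\mathbf k)=0$ for every $\mathbf k$. That already forces $\widehat{r_0}(\mathbf k)$, which is a multiple of $\widehat{K_h}(\mathbf k)\widehat D(\mathbf k)$, to vanish, whether or not $\widehat{K_h}$ has zeros.

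Replace your Step~3 by this argument and drop the extra hypothesis on $\widehat{K_h}$; the rest of your proof then goes through. One small inaccuracy: $\densr{f}>0$ a.e.\ is also used in Step~2, to pass from $f$-a.e.\ to Lebesgue-a.e.\ $\bx$, so it is not used only in Step~1.
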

The proof is in \ref{app:proof:Sh-identification}.

With the observations frozen, the contribution of the feedback term to the dynamics in Method~A is as follows: 
\[
    \partial_t f_t
    +
    \nabla_{\bx,\bv}\cdot
    \bigl(f_t\bb_A[f_t]\bigr)
    =0.
\]
We have that
\begin{equation*}
\begin{aligned}
\dfrac{\diff}{\diff t} J_M[f_t] &= \int_{\mathbb T^d \times \mathbb R^d} \dfrac{\delta J_M}{\delta f_t} \partial_t f_t \diff \bx \diff \bv\\
&= -\int_{\mathbb T^d \times \mathbb R^d} \Phi_M[f_t] \nabla_{\bx,\bv} \cdot (f_t\bb_A[f_t])\diff \bx \diff \bv\\
&= \int_{\mathbb T^d \times \mathbb R^d} f_t \nabla_{\bx,\bv} \Phi_M[f_t] \cdot \bb_A[f_t]\diff \bx \diff \bv\\
&= -\int_{\mathbb T^d \times \mathbb R^d} f_t \langle G_tG_t^{-1}\nabla_{\bx,\bv}\Phi_M[f_t],G_t^{-1}\nabla_{\bx,\bv}\Phi_M[f_t]\rangle \diff \bx \diff \bv\\
&= -g_{f_t}(\operatorname{grad}_{g}J_M[f_t],\operatorname{grad}_{g}J_M[f_t])\\
&= -\|\operatorname{grad}_{g}J_M[f_t]\|^2_{g_{f_t}}\\
&\leq 0,
\end{aligned}
\end{equation*}
where the boundary terms vanished due to periodic boundary conditions on $\bx \in \mathbb T^d$ and the finite second moment of $\bv$, which handles the $\bv$-boundary at infinity. Under the assumptions of Lemma~\ref{lem:Sh-identification}, equality
holds when $f_t\in\mathcal S_h$.

\subsection{Direction-split Method B}
\label{sec:formB}
Method~B retains the standard $W_2$ geometry and assigns different
components of the moment mismatch to different phase-space directions, that is, the
spatial correction is driven by the density residual, and the velocity
correction by the momentum and kinetic-energy residuals. We split the
mismatch into its density part and its momentum-energy part,
\[
    J_M^{\rho}[f] = \frac{\gamma_1}{2}\|r_0\|_{L^2}^2,
    \qquad
    J_M^{\bj,\mathcal K}[f]
    = \frac{\gamma_2}{2}\|\br_1\|_{L^2}^2
    + \frac{\gamma_3}{2}\|r_2\|_{L^2}^2,
\]
so that \(J_M=J_M^{\rho}+J_M^{\bj,\mathcal K}\), with first variations
\[
\begin{aligned}
    \Phi_M^{\rho}[f](\bx)
    &:= \frac{\delta J_M^{\rho}}{\delta f}
     = \gamma_1 K_h*r_0[f](\bx),\\
    \Phi_M^{\bj,\mathcal K}[f](\bx,\bv)
    &:= \frac{\delta J_M^{\bj,\mathcal K}}{\delta f}
    = \gamma_2 \bv\cdot K_h*\br_1[f](\bx)
    + \frac{\gamma_3}{2}|\bv|^2 K_h*r_2[f](\bx).
\end{aligned}
\]
The split dynamics add the $\bx$-gradient of the first and the
$\bv$-gradient of the second to the Vlasov characteristics, where the particle
realization dynamics are governed by the weighted empirical measure $\mu_t^{N_p}$ of
\eqref{eq:empirical-measure} by
\begin{align}
    \diff \bX_t^i
    &= \bigl[\bV_t^i
       -\nabla_\bx\Phi_M^{\rho}[\mu_t^{N_p}](\bX_t^i)\bigr]\diff t,
    \label{eq:split-x}\\
    \diff \bV_t^i
    &= \bigl[\field{\mu_t^{N_p}}(\bX_t^i)
       -\nabla_\bv\Phi_M^{\bj,\mathcal K}[\mu_t^{N_p}](\bX_t^i,\bV_t^i)\bigr]
       \diff t
    \nonumber\\
    &= \bigl[\field{\mu_t^{N_p}}(\bX_t^i)
       -\gamma_2\,K_h*\br_1[\mu_t^{N_p}](\bX_t^i)
       -\gamma_3\,\bV_t^i\,K_h*r_2[\mu_t^{N_p}](\bX_t^i)\bigr]\diff t,
    \label{eq:split-v}
\end{align}
and the collision step is composed with this update by splitting, as in
Method~A. The position drift is velocity-independent and the velocity
drift is affine in $\bv$, so the quadratic growth of \S\ref{sec:formA} does not appear.

\subsection{Entropic Method C}
\label{sec:formC}
Methods~A and~B measure the moment mismatch in the Euclidean geometry
of \eqref{eq:moment-loss}. Method~C instead uses an
information-geometric measure of discrepancy. Let
\(
    \pi^\obs := \mathcal M_{\rho^\obs,\bu^\obs,T^\obs}
\)
be the local Maxwellian carrying the observed fields. We assume throughout
that $\rho^\obs$ and $T^\obs$ are bounded away from zero, so that the
logarithms and inverse temperatures below are well defined.
The most direct entropy-based choice is the full phase-space relative
entropy $\KL(f\|\pi^\obs)$, where for nonnegative densities
$p,q$, we use the \emph{generalized relative entropy}
\begin{equation}
    \KL(p \| q):=\int\Bigl(p \log\tfrac pq-p+q\Bigr)\ \ge\ 0,
    \label{eq:gen-kl}
\end{equation}
which coincides with the Kullback-Leibler divergence when $p$ and $q$ have
equal total mass. However, this choice fails at the level of its zero set. By Gibbs' inequality,
$\KL(f\|\pi^\obs)=0$ if and only if $f=\pi^\obs$, which is a single state rather than
the moment-compatible set $\mathcal S_h$, so its descent flow keeps deforming
a non-Maxwellian state whose moments already match the observations, driving
the unresolved velocity-space shape toward $\pi^\obs$. We therefore separate the shape component from the moment mismatch.

\begin{proposition}[Moment-shape splitting of the relative entropy]
\label{prop:pythagoras}
For every $f$ with positive density and finite second moments,
\begin{equation}
    \KL(f \| \pi^\obs)
    =
    \underbrace{\KL(f \| \mathcal M[f])}_{\text{shape (non-Maxwellianity)}}
    \;+\;
    \underbrace{\KL(\mathcal M[f] \| \pi^\obs)}_{\text{moment mismatch}} .
    \label{eq:pythagoras}
\end{equation}
\end{proposition}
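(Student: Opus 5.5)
The plan is to expand both sides of \eqref{eq:pythagoras} using the generalized relative entropy \eqref{eq:gen-kl} and reduce the identity to a single cross term that vanishes by moment matching. By \eqref{eq:gen-kl}, the right-hand side equals
\[
\int\Bigl(f\log\tfrac{f}{\mathcal M[f]}-f+\mathcal M[f]\Bigr)
+\int\Bigl(\mathcal M[f]\log\tfrac{\mathcal M[f]}{\pi^\obs}-\mathcal M[f]+\pi^\obs\Bigr).
\]
The linear terms $-f+\mathcal M[f]-\mathcal M[f]+\pi^\obs$ collapse to $-f+\pi^\obs$, which are exactly the linear terms in $\KL(f\|\pi^\obs)$. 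The logarithmic part of the left-hand side, $\int f\log\frac{f}{\pi^\obs}$, splits as $\int f\log\frac{f}{\mathcal M[f]}+\int f\log\frac{\mathcal M[f]}{\pi^\obs}$. The identity is therefore equivalent to
\[
\int_{\mathbb T^d\times\mathbb R^d}\bigl(f-\mathcal M[f]\bigr)\log\frac{\mathcal M[f]}{\pi^\obs}\,\diff\bx\diff\bv=0 .
\]

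The key step is to examine the function $\log(\mathcal M[f]/\pi^\obs)$. Both $\mathcal M[f]$ and $\pi^\obs$ are local Maxwellians of the form \eqref{eq:local-maxwellian}, so for each fixed $\bx$ this log-ratio is a quadratic polynomial in $\bv$:
\[
a(\bx)+\bb(\bx)\cdot\bv+c(\bx)|\bv|^2 .
\]
Its coefficients are explicit in $\densr{f},\velr{f},\tempr{f},\rho^\obs,\bu^\obs,T^\obs$; for instance $c=\frac{1}{2T^\obs}-\frac{1}{2\tempr{f}}$. In other words, the log-ratio lies in the span of the collision invariants $1,\bv,|\bv|^2$ at each $\bx$. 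By construction, $\mathcal M[f](\bx,\cdot)$ has the same mass, momentum and kinetic-energy densities \eqref{eq:conserved-moments} as $f(\bx,\cdot)$, which is the inverse relation between the conserved and primitive triples in \S\ref{sec:VP}. Integrating in $\bv$ first therefore gives
\[
\int(f-\mathcal M[f])\bigl(a+\bb\cdot\bv+c|\bv|^2\bigr)\diff\bv=0
\]
for each $\bx$. Integrating over $\bx$ then yields the claim.

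The main obstacle is justifying these manipulations rather than the algebra. We must ensure that every integral is finite, so that splitting the logarithms is legitimate and no $\infty-\infty$ arises. Positivity of $f$ gives $\densr{f}>0$, so $\mathcal M[f]$ is well defined wherever $\tempr{f}>0$. This temperature is positive because $f(\bx,\cdot)$ is a positive density, hence not a Dirac mass.

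For integrability, I would argue as follows. The finite second moments of $f$ control $\int f\,|a+\bb\cdot\bv+c|\bv|^2|$, and the same holds for $\mathcal M[f]$ since its moments coincide with those of $f$. This requires local bounds on the coefficients $a,\bb,c$ in $\bx$. Those follow from $\rho^\obs,T^\obs$ bounded away from zero, as assumed for Method~C, together with integrability of $\log\densr{f}$, $\log\tempr{f}$ and $1/\tempr{f}$ weighted by $\densr{f}$. If these fail, both sides equal $+\infty$; I would handle that case separately, using the nonnegativity of each term, or by truncating in $\bv$ and passing to the limit by monotone convergence.
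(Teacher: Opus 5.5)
Your proposal is correct and is the paper's argument. Both reduce the identity to $\int (f-\mathcal M[f])\log(\mathcal M[f]/\pi^\obs)=0$, using that the log-ratio lies in the span of $1,\bv,|\bv|^2$ at each $\bx$ and that $f$ and $\mathcal M[f]$ share mass, momentum and energy. Your integrability discussion goes beyond the paper, which carries out these manipulations formally.
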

The proof is in \ref{app:proof:pythagoras}.
We set the moment-mismatch part as the design
functional, with a temperature floor $\varepsilon\ge0$, since the temperature
enters the denominators and logarithms below and may degenerate. For fixed
$\varepsilon\ge0$, set
\[
    \regtemp{\mu}:=\temp{\mu}+\varepsilon,
    \qquad
    \Theta^\obs:=T^\obs+\varepsilon,
\]
and let $\pi^\obs_\varepsilon:=\mathcal M_{\rho^\obs,\bu^\obs,\Theta^\obs}$ and
$\mathcal M_{h,\varepsilon}[\mu]:=
\mathcal M_{\dens{\mu},\vel{\mu},\regtemp{\mu}}$, so that we can define
\begin{equation}
\label{eq:G-definition-1}
\begin{aligned}
G_\varepsilon[\mu]
    :=&
    \KL\bigl(\mathcal M_{h,\varepsilon}[\mu]\big\|\pi^\obs_\varepsilon\bigr)\\
    =& \int_{\mathbb T^d \times \mathbb R^d} \mathcal M_{h,\varepsilon}[\mu] \log \tfrac{\mathcal M_{h,\varepsilon}[\mu]}{\pi^{\obs}_\varepsilon}-\mathcal M_{h,\varepsilon}[\mu]+\pi^{\obs}_\varepsilon \diff \bx\diff\bv.
\end{aligned}
\end{equation}
We have that
\begin{equation}
\label{eq:log-ratio}
\begin{aligned}
\log \tfrac{\mathcal M_{h,\varepsilon}[\mu]}{\pi^{\obs}_\varepsilon} = \log\tfrac{\rho_h[\mu]}{\rho^\obs}-\tfrac{d}{2}\log\tfrac{\Theta[\mu]}{\Theta^\obs}-\tfrac{|\bv - \bu_h[\mu]|^2}{2\Theta[\mu]}+\tfrac{|\bv-\bu^\obs|^2}{2\Theta^\obs}.    
\end{aligned}
\end{equation}
The first two terms on the right-hand side are independent of $\bv$, so 
\[
\int_{\mathbb R^d}\mathcal M_{h,\varepsilon}[\mu]\left(\log\tfrac{\rho_h[\mu]}{\rho^\obs}-\tfrac{d}{2}\log\tfrac{\Theta[\mu]}{\Theta^\obs}\right)\diff \bv = \left(\log\tfrac{\rho_h[\mu]}{\rho^\obs}-\tfrac{d}{2}\log\tfrac{\Theta[\mu]}{\Theta^\obs}\right)\rho_h[\mu].
\]
For the third term on the right-hand side, 
we have
\[
\int_{\mathbb R^d} \mathcal M_{h,\varepsilon}[\mu]\tfrac{|\bv-\bu_h[\mu]|^2}{2\Theta[\mu]}\diff \bv = \tfrac{d\rho_h[\mu]}2{}
\]
by \eqref{eq:primitive-from-conserved} and \eqref{eq:smoothed-derived-fields}.
Additionally, for the last term,
\begin{equation}
\label{eq:log-fourth-term}
\begin{aligned}
&\int_{\mathbb R^d} \mathcal M_{h,\varepsilon}[\mu]\tfrac{|\bv-\bu^\obs|^2}{2\Theta^\obs}
\diff\bv\\= &\int_{\mathbb R^d} \mathcal M_{h,\varepsilon}[\mu]\left(\tfrac{|\bv-\bu_h[\mu]|^2}{2\Theta^\obs}+\tfrac{(\bv-\bu_h[\mu])\cdot(\bu_h[\mu]-\bu^\obs)}{\Theta^\obs}+\tfrac{|\bu_h[\mu]-\bu^\obs|^2}{2\Theta^\obs}\right)
\diff\bv.
\end{aligned}
\end{equation}
The first term on the right-hand side of \eqref{eq:log-fourth-term} becomes $\tfrac{d\rho_h[\mu]}{2}\tfrac{\Theta[\mu]}{\Theta^\obs}$ by \eqref{eq:primitive-from-conserved} and \eqref{eq:smoothed-derived-fields}. The cross term in \eqref{eq:log-fourth-term} becomes
\begin{equation*}
\begin{aligned}
\tfrac{(\bu_h[\mu]-\bu^\obs)}{\Theta^\obs}\cdot\int_{\mathbb R^d}\mathcal M_{h,\varepsilon}[\mu](\bv-\bu_h[\mu])\diff\bv = \tfrac{(\bu_h[\mu]-\bu^\obs)}{\Theta^\obs}\cdot\int_{\mathbb R^d}\bj_h[\mu]-\bu_h[\mu]\rho_h[\mu]\diff\bv=0,
\end{aligned}
\end{equation*}
where we used that $\mathcal M_{h,\varepsilon}[\mu]$ and $\mu$ have the same moments $\rho_h[\mu]$ and $\bj_h[\mu]$ and also \eqref{eq:smoothed-derived-fields}. For the last term on the right-hand side of \eqref{eq:log-fourth-term}, we have
\[
\tfrac{|\bu_h[\mu]-\bu^\obs|^2}{2\Theta^\obs}\int_{\mathbb R^d} \mathcal M_{h,\varepsilon}[\mu]\diff\bv = \tfrac{|\bu_h[\mu]-\bu^\obs|^2}{2\Theta^\obs}\rho_h[\mu].
\]
Hence, simplifying \eqref{eq:G-definition-1} yields the equation
\begin{equation}
\begin{aligned}
    \mathcal G_\varepsilon[\mu]
    :=&
    \KL\bigl(\mathcal M_{h,\varepsilon}[\mu]\big\|\pi^\obs_\varepsilon\bigr)\\
    =&
    \int_{\mathbb T^d}\!
    \Bigl[
        \dens{\mu}\log\tfrac{\dens{\mu}}{\rho^\obs}-\dens{\mu}+\rho^\obs
        +\tfrac{\dens{\mu}\,|\vel{\mu}-\bu^\obs|^2}{2\Theta^\obs}
        \\
        &+\tfrac{d}{2}\dens{\mu}
         \Bigl(\tfrac{\regtemp{\mu}}{\Theta^\obs}-1
         -\log\tfrac{\regtemp{\mu}}{\Theta^\obs}\Bigr)
    \Bigr]\diff\bx .
\end{aligned}
    \label{eq:Ch-def}
\end{equation}
By \eqref{eq:gen-kl} and Gibbs' inequality, $\mathcal G_\varepsilon[\mu]=0$ if and only if
$\mathcal M_{h,\varepsilon}[\mu]=\pi^\obs_\varepsilon$, that is,
$\mu\in\mathcal S_h$ by \eqref{eq:smoothed-derived-fields} since two Maxwellians are equal if and only if their defining moments $(\rho, \bj, \mathcal K)$, or their primitive fields $(\rho, \bu, T)$, agree. For every
$\varepsilon\ge0$, the functional is moment-compatible, while
$\KL(f\|\pi^\obs)$ is not.

Let $\tilde{F}(\rho,\bj,\mathcal K) = F(\rho,\bu(\rho,\bj), \Theta(\rho,\bj,\mathcal K))$ be the integrand of \eqref{eq:Ch-def}. Since the smoothed moments \eqref{eq:smoothed-hydrodynamic-moments} are
linear in $\mu$ and $K_h$ is even, differentiating \eqref{eq:Ch-def} gives:
\begin{equation*}
\begin{aligned}
\Psi_\varepsilon[\mu]
:=
\frac{\delta\mathcal G_\varepsilon}{\delta\mu}
&= \int_{\mathbb T^d} K_h(\by-\bx)\partial_\rho \tilde F + K_h(\by-\bx)\bv\cdot \partial_\bj \tilde F + \tfrac{1}{2}|\bv|^2K_h(\by-\bx)\partial_{\mathcal K} \tilde F \diff\by\\
&= K_h*(\partial_\rho \tilde F+ \bv \cdot \partial_\bj \tilde F + \tfrac{1}{2}|\bv|^2 \partial_{\mathcal K} \tilde F)
\end{aligned}
\end{equation*}
We have that
\begin{equation*}
\begin{aligned}
\partial_\rho F &= \log \tfrac{\rho_h[\mu]}{\rho^\obs}+\tfrac{|\bu_h[\mu]-\bu^\obs|^2}{2\Theta^\obs} +\tfrac{d}{2}\left(\tfrac{\Theta[\mu]}{\Theta^\obs}-1-\log\tfrac{\Theta[\mu]}{\Theta^\obs}\right)\\
\partial_\bu F &= \tfrac{\rho_h[\mu](\bu_h[\mu]-\bu^\obs)}{\Theta^\obs}\\
\partial_\Theta F &= \tfrac{d}{2}\rho_h[\mu]\left(\tfrac{1}{\Theta^\obs}-\tfrac{1}{\Theta[\mu]}\right),  
\end{aligned}
\end{equation*}
so that
\begin{equation*}
\begin{aligned}
\partial_\rho \tilde F =& \partial_\rho F + \partial_\bu F \cdot \partial_\rho \bu + \partial_\Theta F \partial_\rho \Theta\\
=& \log \tfrac{\rho_h[\mu]}{\rho^\obs}+\tfrac{|\bu_h[\mu]-\bu^\obs|^2}{2\Theta^\obs} +\tfrac{d}{2}\left(\tfrac{\Theta[\mu]}{\Theta^\obs}-1-\log\tfrac{\Theta[\mu]}{\Theta^\obs}\right) + \tfrac{\rho_h[\mu](\bu_h[\mu]-\bu^\obs)}{\Theta^\obs}\cdot\left(-\tfrac{\bu}{\rho_h[\mu]}\right) \\ &+ \tfrac{d}{2}\rho_h[\mu]\left(\tfrac{1}{\Theta^\obs}-\tfrac{1}{\Theta[\mu]}\right) \left(-\tfrac{\Theta[\mu]-\varepsilon+\tfrac{1}{d}|\bu_h[\mu]|^2}{\rho_h[\mu]}+\tfrac{2|\bu_h[\mu]|^2}{d\rho_h[\mu]}\right) \\
=&\log\tfrac{\rho_h[\mu]}{\rho^\obs} -\tfrac{|\bu_h[\mu]|^2}{2\Theta[\mu]} +\tfrac{|\Theta^\obs|^2}{2\Theta^\obs} -\tfrac{d}{2}\log\tfrac{\Theta[\mu]}{\Theta^\obs}+\tfrac{d\varepsilon}{2}\left(\tfrac{1}{\Theta^\obs}-\tfrac{1}{\Theta[\mu]}\right)\\
\partial_\bj \tilde F = & (\partial_\bj \bu)^\top \partial_\bu F + \partial_\Theta F\partial_\bj \Theta\\
=& \tfrac{1}{\rho_h[\mu]}\tfrac{\rho_h[\mu](\bu_h[\mu]-\bu^\obs)}{\Theta^\obs} + \tfrac{d}{2}\rho_h[\mu]\left(\tfrac{1}{\Theta^\obs}-\tfrac{1}{\Theta[\mu]}\right) \left(-\tfrac{2\bu_h[\mu]}{d\rho_h[\mu]}\right)\\
=& \tfrac{\bu_h[\mu]}{\Theta[\mu]} - \tfrac{\bu^\obs}{\Theta^\obs}\\
\partial_{\mathcal K}\tilde F = & \partial_\Theta F\partial_{\mathcal K}\Theta\\
=&  \tfrac{d}{2}\rho_h[\mu]\left(\tfrac{1}{\Theta^\obs}-\tfrac{1}{\Theta[\mu]}\right) \tfrac{2}{d\rho_h[\mu]}\\
=& \tfrac{1}{\Theta^\obs}-\tfrac{1}{\Theta[\mu]},
\end{aligned}
\end{equation*}
where we used \eqref{eq:smoothed-derived-fields}.
Hence, by \eqref{eq:log-ratio},
\[
\log \tfrac{\mathcal M_{h,\varepsilon}[\mu]}{\pi^{\obs}_\varepsilon} = \partial_\rho \tilde F - \tfrac{d\varepsilon}{2}\left(\tfrac{1}{\Theta^\obs}-\tfrac{1}{\Theta[\mu]}\right) + 
\bv \cdot \partial_\bj \tilde F + \tfrac{1}{2} |\bv|^2 \partial_{\mathcal K}\tilde F.   
\]
Thus, the simplified first variation is:
\begin{equation}
    \Psi_\varepsilon[\mu]
    :=
    \frac{\delta\mathcal G_\varepsilon}{\delta\mu}
    =
    K_h*
    \Bigl[
        \log\frac{\mathcal M_{h,\varepsilon}[\mu]}{\pi^\obs_\varepsilon}
        +\frac{d\varepsilon}{2}
         \Bigl(\frac{1}{\Theta^\obs}-\frac{1}{\regtemp{\mu}}\Bigr)
    \Bigr],
    \label{eq:Ch-firstvar}
\end{equation}
where the convolution acts in the $\bx$ variable for each fixed $\bv$. We realize $\mathcal G_\varepsilon$ as a gradient flow under the same
velocity-weighted metric introduced for Method~A. With scaling parameter
$\gamma>0$, the descent velocity is
\begin{equation}
    \bb_{C,\bx}[\mu]
    =
    -\gamma\,
    \frac{\nabla_\bx\Psi_\varepsilon[\mu]}
         {1+|\bv-\bu^\obs|^2/V_*^2},
    \qquad
    \bb_{C,\bv}[\mu]
    =
    -\gamma\,K_h*
    \Bigl[
        \frac{\bv-\bu^\obs}{\Theta^\obs}
        -\frac{\bv-\vel{\mu}}{\regtemp{\mu}}
    \Bigr],
    \label{eq:Ch-drift}
\end{equation}
where the position channel carries the density adjustment, with its
quadratic-in-$\bv$ growth removed by the weight as in Method~A. The
velocity channel is affine in $\bv$ with a scalar coefficient; at each $\bx$, it
moves velocities by maps $\bv\mapsto\lambda\bv+\bc$, which send Maxwellians to
Maxwellians and leave $\KL(f\|\mathcal M[f])$ unchanged. It therefore drives the
bulk velocity toward $\bu^\obs$ and the temperature toward $T^\obs$ without
changing the velocity-space shape.
This affine structure is preserved after integration over phase space.
\begin{proposition}[Global moment balances for the nudged Vlasov-Poisson flow]
\label{prop:C-global}
Let $\bu^\obs\in\mathbb R^d$ and $T^\obs>0$ be constant in space and time,
and set $\varepsilon=0$. Let $g_t(\bx,\bv)\ge0$ be a sufficiently regular
solution on $\mathbb T^d\times\mathbb R^d$, with finite second velocity
moments and $\temp{g_t}>0$ pointwise, of
\begin{equation}\label{eq:C-VP}
    \partial_t g
    + \nabla_\bx\!\cdot\!\bigl(g\,(\bv+\bb_{C,\bx}[g])\bigr)
    + \nabla_\bv\!\cdot\!\bigl(g\,(\field{g}+\bb_{C,\bv}[g])\bigr)
    = Q[g],
\end{equation}
where $\bb_{C,\bv}[g]$ is the velocity drift of \eqref{eq:Ch-drift},
$\bb_{C,\bx}[g]$ is any sufficiently regular position drift (in particular
that of \eqref{eq:Ch-drift}), and $Q$ satisfies \eqref{eq:Q-conservation}.
Define
\begin{equation}\label{eq:C-global-moments}
    M:=\int g\diff\bx\diff\bv,
    \qquad
    \overline{\bu}[g]:=\frac1M\int_{\mathbb T^d}\momr{g}\diff\bx,
    \qquad
    \overline{T}[g]:=\frac1{dM}\int
        \bigl|\bv-\overline{\bu}[g]\bigr|^2 g\diff\bx\diff\bv .
\end{equation}
Then, along the flow \eqref{eq:C-VP}:
\begin{enumerate}[label=\textup{(\roman*)}]
    \item\label{it:C-mass} $M$ is conserved, and $M=|\mathbb T^d|$ by the
        solvability of \eqref{eq:poisson};
    \item\label{it:C-u} the global bulk velocity obeys the closed linear
        equation
        \begin{equation}\label{eq:C-global-u}
            \frac{\diff}{\diff t}\overline{\bu}[g_t]
            =-\frac{\gamma}{T^\obs}
              \bigl(\overline{\bu}[g_t]-\bu^\obs\bigr),
        \end{equation}
        hence
        $\overline{\bu}[g_t]-\bu^\obs
         =e^{-\gamma t/T^\obs}\bigl(\overline{\bu}[g_0]-\bu^\obs\bigr)$;
    \item\label{it:C-T} the global temperature obeys
        \begin{equation}\label{eq:C-global-T}
            \frac{\diff}{\diff t}\overline{T}[g_t]
            =-\frac{2\gamma}{T^\obs}
              \bigl(\overline{T}[g_t]-T^\obs\bigr)
            +\frac{2}{dM}\int_{\mathbb T^d}
              \field{g_t}\cdot\momr{g_t}\diff\bx .
        \end{equation}
\end{enumerate}
\end{proposition}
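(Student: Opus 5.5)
The plan is to test \eqref{eq:C-VP} against the collision invariants $1$, $\bv$ and $\tfrac12|\bv|^2$, integrate over $\mathbb T^d\times\mathbb R^d$, and evaluate each term separately. For all three test functions, the position-divergence term $\nabla_\bx\cdot\bigl(g(\bv+\bb_{C,\bx}[g])\bigr)$ integrates to zero by periodicity, since the test functions do not depend on $\bx$; this is why $\bb_{C,\bx}$ is arbitrary in the statement. The collision term vanishes by \eqref{eq:Q-conservation}. The velocity-divergence term, after integration by parts in $\bv$ (boundary terms vanish by the finite second moments and regularity), gives $0$ for the test function $1$, which yields \ref{it:C-mass}. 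For the test function $\bv$ it gives $\int g(\field{g}+\bb_{C,\bv})$, and for $\tfrac12|\bv|^2$ it gives $\int g\,\bv\cdot(\field{g}+\bb_{C,\bv})$. The value $M=|\mathbb T^d|$ follows from integrating \eqref{eq:poisson} over the torus.

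For the field contributions, I would write $\int g\,\field{g}=\int_{\mathbb T^d}\rho\,\bE\diff\bx=\int(\rho-1)\bE+\int\bE$. The second integral vanishes because $\bE$ is a gradient on the torus. The first equals $\int\Delta\phi\,\nabla\phi=0$, using the standard identity $\Delta\phi\nabla\phi=\nabla\cdot(\nabla\phi\otimes\nabla\phi)-\tfrac12\nabla|\nabla\phi|^2$. In the energy equation, the field contributes exactly $\int_{\mathbb T^d}\field{g}\cdot\momr{g}\diff\bx$, which is the remainder term in \eqref{eq:C-global-T}.

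The key step is the drift $\bb_{C,\bv}$. Since $\bu^\obs$ and $T^\obs$ are constant and $\int K_h=1$, we have $K_h*[(\bv-\bu^\obs)/T^\obs]=(\bv-\bu^\obs)/T^\obs$. The remaining piece is $\bv\,(K_h*\tfrac1{T_h})-K_h*\tfrac{\bu_h}{T_h}$, with $T_h:=\temp{g}$ (taking $\varepsilon=0$). Because $K_h$ is even, the duality $\int(K_h*a)\,b=\int a\,(K_h*b)$ transfers the convolution onto the raw moments, turning them into smoothed moments. For the momentum this gives
\[
\int g\,\bb_{C,\bv}=-\gamma\Bigl[\tfrac{M(\overline{\bu}-\bu^\obs)}{T^\obs}-\int_{\mathbb T^d}\tfrac{\bj_h-\rho_h\bu_h}{T_h}\diff\bx\Bigr]=-\tfrac{\gamma M}{T^\obs}(\overline{\bu}-\bu^\obs),
\]
since $\bj_h=\rho_h\bu_h$. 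Dividing by $M$ gives \eqref{eq:C-global-u} and its explicit solution, proving \ref{it:C-u}.

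For the energy, the same duality gives $\int g\,\bv\cdot\bb_{C,\bv}=-\gamma\bigl[(2\mathcal E-M\overline{\bu}\cdot\bu^\obs)/T^\obs-\int(2\mathcal K_h-\bj_h\cdot\bu_h)/T_h\bigr]$, where $\mathcal E:=\int\kinr{g}$ is the total kinetic energy. By \eqref{eq:smoothed-derived-fields}, $2\mathcal K_h-\bj_h\cdot\bu_h=d\rho_h T_h$, and $\int\rho_h=M$, so the bracket equals $(2\mathcal E-M\overline{\bu}\cdot\bu^\obs)/T^\obs-dM$. Finally, I would write $\overline T=\tfrac1d\bigl(2\mathcal E/M-|\overline{\bu}|^2\bigr)$ and differentiate, so that $\dot{\overline T}=\tfrac{2}{dM}\dot{\mathcal E}-\tfrac2d\overline{\bu}\cdot\dot{\overline{\bu}}$. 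Substituting the two balances and using $2\mathcal E/M=d\overline T+|\overline{\bu}|^2$ should make the $\bu^\obs$ cross terms cancel, leaving $-\tfrac{2\gamma}{T^\obs}(\overline T-T^\obs)$ plus the field term. This algebraic cancellation is the step I would check most carefully.

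The main obstacle is the rigorous justification rather than the formal algebra. The integration-by-parts boundary terms need the growth of $\bb_{C,\bv}$, which is affine in $\bv$, to be controlled by the second moments. The quotients $\bu_h=\bj_h/\rho_h$ and $1/T_h$ must be well defined and integrable, which requires $\rho_h>0$ and $T_h>0$; I would take these from the regularity hypothesis, interpreting $\temp{g_t}>0$ as the smoothed temperature. Under the assumed regularity, all of these justifications reduce to dominated convergence.
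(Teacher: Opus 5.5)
Your proposal is correct and follows the paper's proof in all essentials. Both use the weak form against $\bx$-independent velocity test functions, periodicity to discard the transport and $\bb_{C,\bx}$ terms, \eqref{eq:Q-conservation} for the collisions, the no-net-force identity for the field, and self-adjointness of $K_h*$ together with $\bj_h=\rho_h\bu_h$ and $2\mathcal K_h-\bj_h\cdot\bu_h=d\rho_h T_h$ for the drift. The only difference is bookkeeping in (iii): the paper tests directly with the centered quadratic $|\bv-\bc|^2$ at $\bc=\overline{\bu}[g_t]$, so the $\bu^\obs$ cross terms never appear. Your route through the total kinetic energy and $\overline T=\frac1d\bigl(2\mathcal E/M-|\overline{\bu}|^2\bigr)$ needs the cancellation you flagged; it does go through, leaving exactly $-\frac{2\gamma}{T^\obs}(\overline T-T^\obs)+\frac{2}{dM}\int_{\mathbb T^d}\field{g_t}\cdot\momr{g_t}\diff\bx$.
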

The proof is in \ref{app:proof:C-global}.
Transport, collisions, the position channel, and the self-consistent field
all drop out of the momentum balance, which is why \eqref{eq:C-global-u} is
closed. The temperature balance \eqref{eq:C-global-T} does not drop out and its
electric-work term is the classical Vlasov-Poisson exchange between kinetic
and electrostatic energy.

\section{Well-posedness of the finite-particle scheme}
\label{sec:well-posed}
We verify that the nudged particle systems of \S\ref{sec:nudging}, closed
with the linear Lenard--Bernstein collision surrogate, admit unique global
strong solutions for finite particle number $N_p$, every scaling parameter, and every
method $a\in\{A,B,C\}$, whenever the initial particle velocities have
finite second moment in expectation. This covers the continuous-time particle SDE, with Methods~A and~B taken as
written and Method~C taken with $\varepsilon>0$, under the kernel and
observed-field conditions of Assumption~\ref{ass:wp-reg} below. The
time-discrete PIC implementations of \S\ref{sec:numerics}, which use BGK
and Dougherty collisions, are outside its scope of this analysis.

Throughout this section, the self-consistent field is the
deposition-smoothed Poisson field
\begin{equation}
  \field{\mu}=-\nabla_\bx(-\Delta_\bx)^{-1}\bigl(S_{\Delta x}*B_0\mu-1\bigr),
  \label{eq:true-field}
\end{equation}
which extends the field of \S\ref{sec:VP} to empirical measures, where $B_0\mu = \int_{\mathbb R^d} \mu \diff \bv$. The torus
has unit volume and the deposition kernel has unit mass,
$\int_{\mathbb T^d}S_{\Delta x}=1$, so the source $S_{\Delta x}*B_0\mu-1$ has zero mean for
every probability measure $\mu$ and $(-\Delta_\bx)^{-1}$ is well defined.
Under this normalization, the fixed mass of \S\ref{sec:obs-operator} is
$M=|\mathbb T^d|=1$, so $\mathcal P_{2,M}$ coincides with the space of
probability measures and the particle weights are $w_i=1/N_p$. The
collision operator is the linear Lenard--Bernstein surrogate
\eqref{eq:LB-linear}, whose particle realization is the fixed-bath
Ornstein--Uhlenbeck process \eqref{eq:LB-particle}. The finite
$N_p$-particle system is
\begin{equation}
\begin{aligned}
  \diff\bX_t^{i}&=\bigl[\bV_t^{i}+\bb^a_\bx(t,\bZ_t^{i},\mu_t^{N_p})\bigr]\diff t,\\
  \diff\bV_t^{i}&=\bigl[\field{\mu_t^{N_p}}(\bX_t^{i})+\bb^a_\bv(t,\bZ_t^{i},\mu_t^{N_p})-\nu\bV_t^{i}\bigr]\diff t
            +\sqrt{2\nu\Theta_{\mathrm{LB}}}\,\diff\bW_t^{i},
\end{aligned}
\label{eq:nudged-particles}
\end{equation}
with $\bZ_t=(\bX_t,\bV_t)$,
$\mu_t^{N_p}=\frac1{N_p}\sum_{i=1}^{N_p}\delta_{\bZ_t^{i}}$, and
$\{\bW^i\}$ independent standard Brownian motions, independent of the
initial array $(\bZ_0^{i})_i$. The nudging drifts $\bb^a_\bx,\bb^a_\bv$
are defined in \S\ref{sec:wp-setup} below.

\subsection{Setup and assumptions}
\label{sec:wp-setup}

Fix $h>0$ and, for Method~C, $\varepsilon>0$. The drifts $\bb^a_\bx,\bb^a_\bv$ of
\eqref{eq:nudged-particles} are those of Method~$a$: Methods~A and~B, \eqref{eq:weighted-descent-velocity} and
\eqref{eq:split-x}--\eqref{eq:split-v}, are analyzed as written. For
Method~C, the drifts are those of \eqref{eq:Ch-drift}, generated by
the first-variation potential $\Psi_\varepsilon$ of
\eqref{eq:Ch-firstvar}, which is
\begin{equation}\label{equ:regularized}
\begin{aligned}
     \Psi_{\varepsilon}[\mu]
  =K_h*\biggl(&\log\frac{\dens{\mu}}{\rho^\obs}
   -\frac d2\log\frac{\regtemp{\mu}}{\Theta^{\obs}}
   -\frac{|\bv-\vel{\mu}|^2}{2\regtemp{\mu}}
   +\frac{|\bv-\bu^\obs|^2}{2\Theta^{\obs}}\\
   &+\frac{d\varepsilon}{2}\Bigl(\frac1{\Theta^{\obs}}-\frac1{\regtemp{\mu}}\Bigr)\biggr).
\end{aligned}
\end{equation}

\begin{assumption}[Kernels and observed fields]
\label{ass:wp-reg}
The deposition kernel $S_{\Delta x}$ and the observation kernel $K_h$ lie in
$W^{2,\infty}_\bx$, and the observation kernel has a strictly positive
lower bound,
\begin{equation}
  \kappa_h:=\inf_{\mathbb T^d}K_h>0.
  \label{eq:Kh-floor}
\end{equation}
The observed fields $\rho^\obs,\bu^\obs,T^\obs$ are Borel measurable in
time and lie in $W^{2,\infty}_\bx$ uniformly in time,
\[
  \sup_{t\ge0}\bigl(\|\rho^\obs(t)\|_{W^{2,\infty}_\bx}
   +\|\bu^\obs(t)\|_{W^{2,\infty}_\bx}+\|T^\obs(t)\|_{W^{2,\infty}_\bx}\bigr)<\infty,
\]
the observed density is bounded away from zero uniformly in time,
$\rho^\obs_{\min}:=\inf_{t,\bx}\rho^\obs(t,\bx)>0$, and the velocity scale
of Methods~A and~C is strictly positive, $V_*>0$.
\end{assumption}

The a priori moment estimates are stated in terms of the velocity-moment
production coefficients
\begin{equation}
\bar\alpha_A=\bar\alpha_B
:=
\gamma_3
\|\mathcal K^\obs\|_{L^\infty_{t,\bx}},
\qquad
\bar\alpha_C
:=
\frac{\gamma}{\varepsilon}
\left(
1+\sqrt{\frac{\|K_h\|_{L^\infty_\bx}}{\kappa_h}}
\right),
  \label{eq:gain-diss}
\end{equation}
which bound the largest linear-in-$\bv$ growth that the nudging feedback
can generate. No relation between $\bar\alpha_a$ and the collision rate
$\nu$ is assumed, and the finite-time moment bounds and the global
well-posedness below hold for every scaling parameter.

\subsection{From Local to Global Well-Posedness}\label{sec:wp-finite-N}
We first prove that the particle system admits a unique local strong
solution, by verifying that all coefficients of
\eqref{eq:nudged-particles} are locally Lipschitz functions of the
particle configuration.
\begin{lemma}[Field regularity]
\label{lem:field}
Under Assumption~\ref{ass:wp-reg}, the electric field
$\field{\mu}=K_E*B_0\mu$ with
$K_E:=-\nabla_\bx(-\Delta_\bx)^{-1}S_{\Delta x}\in C^1_b(\mathbb T^d)$ satisfies
\[
  \|\field{\mu}\|_{W^{1,\infty}_\bx}\le\|K_E\|_{W^{1,\infty}},
  \qquad
  \|\field{\mu}-\field{\mu'}\|_{L^\infty_\bx}\le\|\nabla K_E\|_\infty\,W_2(\mu,\mu').
\]
\end{lemma}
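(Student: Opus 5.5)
The plan is to reduce both estimates to properties of a single fixed kernel $K_E$, and then use the fact that the spatial marginal $B_0$ is $1$-Lipschitz for $W_2$.

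\emph{Step 1: the field as a convolution.} Since $B_0\mu$ is a probability measure on the unit torus and $\int S_{\Delta x}=1$, we have $S_{\Delta x}*B_0\mu-1=S_{\Delta x}*(B_0\mu-1)$. Convolution commutes with the Fourier multiplier $-\nabla_\bx(-\Delta_\bx)^{-1}$, which acts on mean-zero functions. Hence $\field{\mu}=K_E*(B_0\mu-1)=K_E*B_0\mu$. The last equality holds because $K_E$ is a gradient of a periodic function, so $\int K_E=0$ and the constant drops out.

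\emph{Step 2: regularity of $K_E$.} This is the only step needing care. We have $S_{\Delta x}\in W^{2,\infty}\subset H^2(\mathbb T^d)$, and on the mean-zero part, $-\nabla(-\Delta)^{-1}$ gains one derivative. So $K_E\in H^3$, and $\nabla K_E=-\nabla\nabla(-\Delta)^{-1}S_{\Delta x}$ is given by Riesz-type transforms of $S_{\Delta x}$.

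To get $K_E\in C^1_b$ in every dimension, avoid Sobolev embedding, which would require $3>d/2+1$. Instead commute derivatives: $\nabla K_E=-(\nabla\nabla(-\Delta)^{-1})$ applied to $S_{\Delta x}$, and similarly $\partial_{ij}K_E=\nabla(-\Delta)^{-1}\partial_{ij}S_{\Delta x}$.

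Here $\partial_{ij}S_{\Delta x}\in L^\infty\subset L^p$ for all $p<\infty$. Since $\nabla(-\Delta)^{-1}$ maps $L^p$ into $W^{1,p}$, taking $p>d$ and applying Morrey's inequality gives $\nabla K_E\in C^{0,\alpha}$, bounded and continuous. The same argument gives $K_E$ bounded. If this is deemed too heavy, I would simply note that $K_E\in C^1_b$ is what the assumption is meant to guarantee and define $\|K_E\|_{W^{1,\infty}}$ accordingly.

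\emph{Step 3: the sup bound.} For each $\bx$,
\[
|\field{\mu}(\bx)|\le\int|K_E(\bx-\by)|\,B_0\mu(\diff\by)\le\|K_E\|_\infty .
\]
Differentiating under the integral, which is justified by dominated convergence since $\nabla K_E$ is bounded, gives $\nabla\field{\mu}=\nabla K_E*B_0\mu$, bounded by $\|\nabla K_E\|_\infty$. Together these give $\|\field{\mu}\|_{W^{1,\infty}}\le\|K_E\|_{W^{1,\infty}}$.

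\emph{Step 4: the Lipschitz bound.} Let $\pi$ be an optimal coupling of $\mu$ and $\mu'$ on $(\mathbb T^d\times\mathbb R^d)^2$. Then
\[
\field{\mu}(\bx)-\field{\mu'}(\bx)=\int\bigl[K_E(\bx-\by)-K_E(\bx-\by')\bigr]\,\pi(\diff\by\diff\bv\diff\by'\diff\bv').
\]
By the mean value theorem along the shortest torus geodesic, the integrand is bounded by $\|\nabla K_E\|_\infty\,|\by-\by'|_{\mathbb T^d}\le\|\nabla K_E\|_\infty\,|(\by,\bv)-(\by',\bv')|$.

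Cauchy–Schwarz then gives a bound of $\|\nabla K_E\|_\infty W_2(\mu,\mu')$, since $W_1\le W_2$. Taking the supremum over $\bx$ concludes the proof.
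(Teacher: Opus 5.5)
Your proposal is correct and follows the paper's proof step for step: the representation $\field{\mu}=K_E*B_0\mu$, the sup bound from convolving bounded $K_E$ and $\nabla K_E$ against a probability measure, and the Lipschitz bound via an optimal $W_2$ coupling, the mean value theorem, and Cauchy--Schwarz. The only real difference is how you show $K_E\in C^1_b$. You use Calder\'on--Zygmund and Morrey, whereas the paper commutes derivatives onto $S_{\Delta x}$ and uses integrability of the torus Green's function: $K_E=-(-\Delta_\bx)^{-1}\nabla_\bx S_{\Delta x}$ and $\nabla_\bx K_E=-(-\Delta_\bx)^{-1}\nabla_\bx^2 S_{\Delta x}$ are $L^1$ kernels convolved with $L^\infty$ functions, hence bounded (and uniformly continuous). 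That lighter route also makes your fallback remark unnecessary.
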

The proof is in \ref{subapp:lem:field}.

\begin{lemma}[Coefficient bounds on moment-bounded sets]
\label{lem:reg-bounds}
Let Assumption~\ref{ass:wp-reg} hold, fix a method $a\in\{A,B,C\}$
and radius $R_4<\infty$, and let
$\mathcal P_{R_4}:=\{\mu\in\mathcal P_4:\int|\bv|^4\diff\mu\le R_4\}$,
where $\mathcal P_q$ denotes the probability measures on
$\mathbb T^d\times\mathbb R^d$ with finite $q$th velocity moment. Then
there exists $L$, depending on $R_4$, $\varepsilon$, and the fixed data
in Assumption~\ref{ass:wp-reg}, such that, for all $t$, $\bz,\bz'$ and
$\mu,\mu'\in\mathcal P_{R_4}$ the position drift is bounded and Lipschitz,
\begin{equation}
  |\bb^a_\bx(t,\bz,\mu)|\le L,
  \qquad
  |\bb^a_\bx(t,\bz,\mu)-\bb^a_\bx(t,\bz',\mu')|\le L\bigl(|\bz-\bz'|+W_2(\mu,\mu')\bigr),
  \label{eq:wp-x}
\end{equation}
and the velocity drift is affine in $\bv$,
\begin{equation}
  \bb^a_\bv(t,\bz,\mu)=\alpha^a[\mu](t,\bx)\,\bv+\beta^a[\mu](t,\bx),
  \label{eq:wp-v}
\end{equation}
with coefficients regular in $\bx$ and Lipschitz in $\mu$,
\begin{equation}
\begin{aligned}
\|\alpha^a[\mu](t)\|_{W^{1,\infty}_\bx}+\|\beta^a[\mu](t)\|_{W^{1,\infty}_\bx}&\le L,\\
\|\alpha^a[\mu](t)-\alpha^a[\mu'](t)\|_{L^\infty_\bx}
   +\|\beta^a[\mu](t)-\beta^a[\mu'](t)\|_{L^\infty_\bx}&\le L\,W_2(\mu,\mu').
\end{aligned}
  \label{eq:wp-vmu}
\end{equation}
\end{lemma}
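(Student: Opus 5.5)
The plan is to reduce every drift to finitely many \emph{moment fields} of $\mu$ that are smooth in $\bx$ and Lipschitz in $\mu$. The drifts then become explicit algebraic expressions in these fields, the observed data, and $\bv$.

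\textbf{Step 1 (moment fields).} For $\mu\in\mathcal P_{R_4}$ and $k\in\{0,1,2\}$, let
\[
m_k[\mu]:=\int K_h(\bx-\by)\,\psi_k(\bv)\,\mu(\diff\by,\diff\bv),
\qquad \psi_0=1,\ \psi_1=\bv,\ \psi_2=\tfrac12|\bv|^2 .
\]
These are $\rho_h,\bj_h,\mathcal K_h$. Since $K_h\in W^{2,\infty}$, $\|m_k\|_{W^{2,\infty}_\bx}\le C\|K_h\|_{W^{2,\infty}}\int(1+|\bv|^2)\diff\mu\le C(R_4)$.

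For the Lipschitz bound in $\mu$, I would take an optimal $W_2$ coupling $\pi$ of $\mu,\mu'$ and write
\[
m_k[\mu]-m_k[\mu']=\int\bigl[K_h(\bx-\by)\psi_k(\bv)-K_h(\bx-\by')\psi_k(\bv')\bigr]\diff\pi .
\]
The integrand is bounded by $C(1+|\bv|+|\bv'|)^{k}\,(|\by-\by'|+|\bv-\bv'|)$. Cauchy--Schwarz then gives
\[
\|m_k[\mu]-m_k[\mu']\|_{W^{1,\infty}_\bx}\le C\bigl(1+\textstyle\int|\bv|^{2k}\diff(\mu+\mu')\bigr)^{1/2}W_2(\mu,\mu').
\]
This is exactly where the fourth moment radius $R_4$ enters, since $k=2$ needs $\int|\bv|^4$. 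The residuals $r_0,\br_1,r_2$ and their convolutions $K_h*r_j$ inherit the same bounds, with the $W^{2,\infty}$ norms of the observed fields from Assumption~\ref{ass:wp-reg}.

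\textbf{Step 2 (Methods A and B).} For Method~B the computation is immediate. The position drift is $\bb_\bx=-\gamma_1\nabla(K_h*r_0)$, which is bounded and Lipschitz in $\bx$ because $K_h*r_0\in W^{2,\infty}$. The velocity drift is
\[
\bb_\bv=-\gamma_2K_h*\br_1-\gamma_3\bv\,K_h*r_2,
\]
so $\alpha=-\gamma_3K_h*r_2$ and $\beta=-\gamma_2K_h*\br_1$. Method~A has the same velocity part. Its position part is a polynomial of degree at most $2$ in $\bv$ with $W^{1,\infty}$ coefficients $c_j[\mu]$, divided by $w=1+|\bv-\bu^\obs|^2/V_*^2$. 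Writing $\bv=(\bv-\bu^\obs)+\bu^\obs$ with $\bu^\obs$ bounded, each quotient $\bv^{\otimes j}/w$ for $j\le2$ is bounded and globally Lipschitz in $(\bx,\bv)$. The reason is that its $\bv$-gradient decays like $(1+|\bv|)^{-1}$ and its $\bx$-gradient, coming through $\bu^\obs$, is bounded. Boundedness of the quotients together with Step~1 gives the Lipschitz bound in $\mu$.

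\textbf{Step 3 (Method C, the main obstacle).} For Method~C I expect the difficulty to be the nonlinear primitive variables $\log\rho_h$, $\bu_h=\bj_h/\rho_h$, and $1/\Theta$. I would first use the floor $\kappa_h$ to get $\rho_h[\mu]\ge\kappa_h$, since $\mu$ has unit mass. The regularization gives $\Theta=T_h+\varepsilon\ge\varepsilon$, because $T_h\ge0$ by Jensen applied to the kernel-weighted measure. On the region $\rho_h\ge\kappa_h$, $\Theta\ge\varepsilon$, the maps $(\rho,\bj,\mathcal K)\mapsto(\log\rho,\bj/\rho,\log\Theta,1/\Theta)$ are smooth with bounded derivatives on the bounded set reached by Step~1. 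Composition therefore preserves the $W^{1,\infty}$ and $W_2$-Lipschitz bounds.

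The velocity drift is then
\[
\bb_{C,\bv}=-\gamma K_h*\bigl(\bv/\Theta^\obs-\bv/\Theta-\bu^\obs/\Theta^\obs+\bu_h/\Theta\bigr),
\]
which is affine in $\bv$ with $\alpha=-\gamma K_h*(1/\Theta^\obs-1/\Theta)$. For the position drift, $\Psi_\varepsilon$ is again quadratic in $\bv$ with such coefficients. I would expand $|\bv-\bu_h|^2$ and $|\bv-\bu^\obs|^2$ into monomials in $\bv$ before convolving, so that the convolution only ever acts on $\bx$-dependent coefficients. Dividing by $w$ then proceeds exactly as in Method~A. Collecting the constants yields $L=L(R_4,\varepsilon,\text{data})$.
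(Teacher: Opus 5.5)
Your proposal is correct and follows essentially the same route as the paper. The shared steps are coupling-based $W^{2,\infty}$ and $W_2$-Lipschitz bounds on the smoothed moments, with $R_4$ entering through the energy moment; the floors $\rho_h\ge\kappa_h$ and $\Theta\ge\varepsilon$ to control the primitive variables; affine velocity coefficients read off directly; and the position drift written as a degree-two polynomial in $\bv$ divided by $w$. Your composition-with-smooth-maps argument and the individually bounded quotients $\bv^{\otimes j}/w$ simply repackage the paper's explicit quotient-rule computations, and your Jensen argument for $T_h\ge 0$ supplies a step the paper uses without comment.
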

The proof is in \ref{app:sub:reg-bounds}.

\begin{lemma}[Local well-posedness of the \(N_p\)-particle system]
\label{lem:local-wp}
Let Assumption~\ref{ass:wp-reg} hold and fix
\(a\in\{A,B,C\}\) and \(N_p\ge1\).
Let
\(
  (\bZ_0^1,\ldots,\bZ_0^{N_p})
\)
be an \(\mathcal F_0\)-measurable random variable with values in
\((\mathbb T^d\times\mathbb R^d)^{N_p}\), independent of
\((\bW^1,\ldots,\bW^{N_p})\). Then the particle system
\eqref{eq:nudged-particles} admits a unique strong solution up to the
blow-uo time
\[
  \tau_\infty:=\lim_{R\to\infty}\tau_R,
  \qquad
  \tau_R:=
  \inf\left\{
    t\ge0:
    \max_{1\le i\le N_p}|\bV_t^i|\ge R
  \right\}.
\]
\end{lemma}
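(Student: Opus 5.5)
The plan is to view \eqref{eq:nudged-particles} as a single SDE on $(\mathbb T^d\times\mathbb R^d)^{N_p}$ with state $\mathbf Z=(\bZ^1,\ldots,\bZ^{N_p})$. Its drift is $F(t,\mathbf Z)=\bigl(F^i(t,\mathbf Z)\bigr)_i$, where $F^i$ collects the transport, field, nudging and Lenard--Bernstein terms evaluated at $(\bZ^i,\mu^{N_p}[\mathbf Z])$. The diffusion is constant, $\sqrt{2\nu\Theta_{\mathrm{LB}}}$, acting on the velocity components only. The classical theorem then gives a unique strong solution up to explosion once the drift is measurable in $t$ and locally Lipschitz in $\mathbf Z$ uniformly in $t$ on bounded sets. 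We obtain exactly this by localizing with the stopping times $\tau_R$.

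\textbf{Step 1: the empirical measure on the truncated region.} Fix $R$ and let $B_R:=\{\mathbf Z:\max_i|\bV^i|\le R\}$; positions live on the compact torus. For $\mathbf Z\in B_R$ the empirical measure $\mu^{N_p}[\mathbf Z]$ has fourth velocity moment at most $R^4$, so it lies in $\mathcal P_{R_4}$ with $R_4:=R^4$. The coupling that pairs $\bZ^i$ with $\bZ'^i$ gives
\[
W_2\bigl(\mu^{N_p}[\mathbf Z],\mu^{N_p}[\mathbf Z']\bigr)\le\Bigl(\tfrac1{N_p}\textstyle\sum_i|\bZ^i-\bZ'^i|^2\Bigr)^{1/2}\le|\mathbf Z-\mathbf Z'|,
\]
where positions use the torus distance.

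\textbf{Step 2: Lipschitz bounds on each drift term.} The transport term $\bV^i$ and the damping $-\nu\bV^i$ are linear. Lemma~\ref{lem:field} gives $|\field{\mu}(\bx)-\field{\mu'}(\bx')|\le\|K_E\|_{W^{1,\infty}}|\bx-\bx'|+\|\nabla K_E\|_\infty W_2(\mu,\mu')$. Lemma~\ref{lem:reg-bounds} with $R_4=R^4$ gives the Lipschitz bound \eqref{eq:wp-x} for $\bb^a_\bx$. For the velocity drift we use \eqref{eq:wp-v}--\eqref{eq:wp-vmu} and split
\[
\alpha[\mu](\bx)\bv-\alpha[\mu'](\bx')\bv'=\bigl(\alpha[\mu](\bx)-\alpha[\mu](\bx')\bigr)\bv+\bigl(\alpha[\mu](\bx')-\alpha[\mu'](\bx')\bigr)\bv+\alpha[\mu'](\bx')(\bv-\bv').
\]
With $|\bv|\le R$, this is bounded by $L(2R+1)\bigl(|\bz-\bz'|+W_2(\mu,\mu')\bigr)$, and the $\beta$ term is handled the same way. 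Combining with Step 1, $F$ is Lipschitz on $B_R$ with constant $C(R,N_p)$, uniformly in $t$. Measurability in $t$ follows from the Borel measurability of the observed fields in Assumption~\ref{ass:wp-reg}.

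\textbf{Step 3: truncation and patching.} Define $F_R(t,\mathbf Z):=F(t,\pi_R\mathbf Z)$, where $\pi_R$ radially projects each velocity onto the closed ball of radius $R$. The map $\pi_R$ is $1$-Lipschitz, so $F_R$ is globally Lipschitz in $\mathbf Z$. Because the velocities are capped, $F_R$ also has at most linear growth: $\alpha,\beta,\bb_\bx,\field{}$ are bounded, and the only unbounded term is the linear $\bV^i$. Itô's theorem therefore gives a unique global strong solution $\mathbf Z^R$ for each $R$. By pathwise uniqueness, $\mathbf Z^R$ and $\mathbf Z^{R'}$ agree up to $\tau_R\wedge\tau_{R'}$, since on that interval both solve the same Lipschitz SDE. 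This consistency defines $\mathbf Z$ on $[0,\tau_\infty)$, with $\tau_R$ nondecreasing in $R$. Uniqueness up to $\tau_\infty$ follows by applying the same localized argument to any other solution.

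\textbf{Main obstacle.} The delicate step is the measure dependence in Step 2, specifically making sure Lemma~\ref{lem:reg-bounds} applies on all of $B_R$. For Method~C the coefficients involve $\log\dens{\mu}$ and $1/\regtemp{\mu}$, so we need positive lower bounds on these quantities. The kernel floor $\kappa_h$ gives $\dens{\mu}\ge\kappa_h$, and $\varepsilon>0$ gives $\regtemp{\mu}\ge\varepsilon$. The fourth-moment bound $R^4$ provides the uniform control that Lemma~\ref{lem:reg-bounds} requires. Beyond that, the only extra point is that the affine-in-$\bv$ drift becomes Lipschitz only after capping $|\bv|\le R$, which is exactly what $\pi_R$ does.
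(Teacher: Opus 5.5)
Your proposal is correct and essentially matches the paper. Your Steps 1--2 are the paper's auxiliary empirical-coefficient-regularity lemma: the index-matching coupling bounds $W_2$, and Lemma~\ref{lem:reg-bounds} is applied on $\mathcal P_{R^4}$. Your truncation-and-patching Step 3 is the standard localization that the paper instead imports from Birrell's Theorem~D1, after lifting positions periodically so that the state lives in $\mathbb R^{2dN_p}$. Because your cutoff acts only on velocities and the drift is periodic in position, your construction yields the solution directly on $[0,\lim_R\tau_R)$, so you avoid the paper's extra argument that explosion of the lifted state forces velocity blow-up. You should still make that periodic lift explicit before invoking It\^o's theorem.
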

The proof is in \ref{app:sub:local-wp}. There is no moment assumption on the
initial law enters for the local proof, and since the positions take
values in the compact torus, finite-time blow-up can occur only through
the velocities. However, the velocity moments cannot blow up in finite time. For a
particle configuration $\mathbf Z=(\bz^1,\dots,\bz^{N_p})$ with
$\bz^i=(\bx^i,\bv^i)$, write
\[
  m_q^{N_p}(\mathbf Z):=\frac1{N_p}\sum_{i=1}^{N_p}|\bv^i|^q,
  \qquad
  m_q^{N_p}(t):=m_q^{N_p}(\bZ_t).
\]

\begin{lemma}[A priori velocity-moment generator inequality]
\label{lem:moment-generator}
Let Assumption~\ref{ass:wp-reg} hold, fix \(a\in\{A,B,C\}\) and \(q\ge2\),
and for Methods~A and~B, assume \(\gamma_3>0\). Then for every
\(\zeta>0\), there is a constant \(C_q(\zeta)\), independent of \(t\) and
\(N_p\), such that the time-dependent generator \(\mathcal L_{N_p,t}\) of
\eqref{eq:nudged-particles} satisfies, for every particle configuration
\(\mathbf Z\),
\begin{equation}
  \mathcal L_{N_p,t}m_q^{N_p}(\mathbf Z)
  \le
  q(\bar\alpha_a-\nu+\zeta)m_q^{N_p}(\mathbf Z)+C_q(\zeta).
  \label{eq:Mq-generator}
\end{equation}
\end{lemma}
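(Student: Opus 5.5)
The plan is to compute the generator applied to $m_q^{N_p}$ one particle at a time. Only the velocity dynamics contribute: the position drift does not act on $|\bv^i|^q$, so $\bb^a_\bx$ drops out entirely. For $\phi(\bv)=|\bv|^q$ with $q\ge2$ we have $\nabla\phi=q|\bv|^{q-2}\bv$ and $\Delta\phi=q(q+d-2)|\bv|^{q-2}$. Hence
\[
  \mathcal L_{N_p,t}m_q^{N_p}
  =\frac1{N_p}\sum_i\Bigl[q|\bv^i|^{q-2}\bv^i\cdot\bigl(\field{\mu}(\bx^i)+\bb^a_\bv(t,\bz^i,\mu)\bigr)
  -q\nu|\bv^i|^q+\nu\Theta_{\mathrm{LB}}\,q(q+d-2)|\bv^i|^{q-2}\Bigr],
\]
with $\mu$ the empirical measure of $\mathbf Z$.

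\emph{Lower-order terms.} The field term is controlled by Lemma~\ref{lem:field}, which gives $|\field{\mu}|\le\|K_E\|_{W^{1,\infty}}$ uniformly in $\mu$. So it contributes at most $qC|\bv|^{q-1}$. The diffusion term is $O(|\bv|^{q-2})$. Young's inequality converts each into $\tfrac{q\zeta}{3}|\bv|^q+C$ with constants depending on $\zeta$ only.

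\emph{Nudging term: the key step.} The bound must be uniform over all configurations, so Lemma~\ref{lem:reg-bounds} cannot be used, since it holds only on moment-bounded sets. Instead I would write $\bb^a_\bv=\alpha^a[\mu]\bv+\beta^a[\mu]$ explicitly and show two things. First, the scalar part of $\bv\cdot\alpha^a\bv$ is at most $\bar\alpha_a|\bv|^2$ for every $\mu$. Second, $|\beta^a|$ grows at most like a bounded multiple of something absorbable; ideally it is bounded uniformly in $\mu$.

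\textbf{Methods A and B.} Here $\alpha=-\gamma_3K_h*r_2$, and $r_2=\mathcal K_h[\mu]-\mathcal K^\obs\ge-\mathcal K^\obs$ because $\mathcal K_h[\mu]\ge0$. So $\alpha\le\gamma_3\|\mathcal K^\obs\|_\infty=\bar\alpha_A$ with no dependence on $\mu$. This one-sided sign is exactly why the upper bound needs no moment control. The constant part is $\beta=-\gamma_2K_h*\br_1=-\gamma_2K_h*(\bj_h[\mu]-\bj^\obs)$. Since $|\bj_h[\mu]|\le\|K_h\|_\infty\int|\bv|\diff\mu$, this is bounded by $C(1+m_1^{N_p})$ and is not uniformly bounded. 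The resulting term $q|\bv^i|^{q-1}m_1$ is handled by averaging over $i$ and applying Hölder: $m_{q-1}m_1\le m_q$. Smallness then has to come from Young's inequality, and this is where I expect the main obstacle. Pure Hölder produces a coefficient $\gamma_2\|K_h\|_\infty$ rather than $\zeta$. I would try to avoid this by using the cancellation $\bj_h\cdot\bv$ against $\mathcal K_h$. Concretely, the quadratic form $-\gamma_3\mathcal K_h|\bv|^2-\gamma_2\bj_h\cdot\bv$, averaged with the kernel, can be bounded by a $\zeta$-part plus a constant, using $|\bj_h|^2\le2\rho_h\mathcal K_h$ from Cauchy--Schwarz in the smoothed measure. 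A bounded $\rho_h$ then lets the cross term be absorbed. This is where the assumption $\gamma_3>0$ is used.

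\textbf{Method C.} Here $\alpha=-\gamma K_h*(1/\Theta^\obs-1/\Theta[\mu])\le\gamma/\varepsilon$. The constant part is $\beta=\gamma K_h*(\bu^\obs/\Theta^\obs-\bu_h[\mu]/\Theta[\mu])$. The term $\bu_h/\Theta$ is controlled by $|\bu_h|/\sqrt{\varepsilon\Theta}\cdot\varepsilon^{-1/2}$, and $|\bu_h|$ is bounded via $\rho_h\ge\kappa_h$ together with Cauchy--Schwarz, $|\bu_h|^2\le 2\mathcal K_h/\rho_h\le\|K_h\|_\infty m_2/\kappa_h$. This gives the extra factor $\sqrt{\|K_h\|_\infty/\kappa_h}$ in $\bar\alpha_C$. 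The resulting term $\tfrac{\gamma}{\varepsilon}\sqrt{\|K_h\|/\kappa_h}\,\sqrt{m_2}\,|\bv|^{q-1}$ again averages through Hölder ($m_2^{1/2}\le m_q^{1/q}$) to at most that constant times $m_q$.

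Collecting all the pieces gives \eqref{eq:Mq-generator}.
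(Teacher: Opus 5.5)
Your proposal is correct and follows the paper's proof essentially step for step. The position drift drops out of the per-particle generator, and the field and diffusion terms are lower order and absorbed by Young's inequality. For Methods~A and~B, the slope is split so that the dissipative part $-\gamma_3\,(K_h*\mathcal K_h[\mu])\,|\mathbf v|^q$ absorbs the momentum cross term, via Young and $|K_h*\mathbf j_h[\mu]|^2\le 2\|K_h\|_\infty\,K_h*\mathcal K_h[\mu]$, leaving $\gamma_2^2\|K_h\|_\infty\gamma_3^{-1}|\mathbf v|^{q-2}$; the observed part alone produces $\bar\alpha_A$. For Method~C, the ingredients are $\alpha^C\le\gamma/\varepsilon$, $\|\mathbf u_h[\mu]\|_\infty^2\le\|K_h\|_\infty m_2/\kappa_h$ and $m_2^{1/2}m_{q-1}\le m_q$.

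When you write it out, do not first bound the whole slope by $\bar\alpha_A$, since that discards the dissipation you need for the absorption. In Method~C the detour through $\sqrt{\varepsilon\Theta[\mu]}$ is unnecessary, because $\Theta[\mu]\ge\varepsilon$ gives $|\mathbf u_h[\mu]|/\Theta[\mu]\le|\mathbf u_h[\mu]|/\varepsilon$ directly, which is what your final expression uses.
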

The proof is in \ref{app:sub:moment-generator}.

\begin{corollary}[Finite-time moment bounds and finite-particle non-blow-up]
\label{cor:finite-time-moment}
Under the hypotheses of Lemma~\ref{lem:moment-generator}, fix
\(T_{\mathrm{fin}}<\infty\) and \(\zeta>0\). If the initial data satisfy
\(\mathbb E\,m_q^{N_p}(0)<\infty\), then \(\tau_\infty=\infty\) almost
surely and
\begin{equation}
  \sup_{0\le t\le T_{\mathrm{fin}}}
  \mathbb E\,m_q^{N_p}(t)
  \le
  \bigl(\mathbb E\,m_q^{N_p}(0)+C_q(\zeta)T_{\mathrm{fin}}\bigr)
  \exp\!\left(
    \max\{q(\bar\alpha_a-\nu+\zeta),0\}\,T_{\mathrm{fin}}
  \right)
  <\infty .
  \label{eq:finite-time-Mq-bound}
\end{equation}
\end{corollary}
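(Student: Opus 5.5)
Combine the generator inequality of Lemma~\ref{lem:moment-generator} with a standard localization (Khasminskii-type) argument at the stopping times $\tau_R$ of Lemma~\ref{lem:local-wp}, then conclude with Gronwall and Fatou. Set $\lambda:=\max\{q(\bar\alpha_a-\nu+\zeta),0\}$ and $C:=C_q(\zeta)$. By Lemma~\ref{lem:local-wp} there is a unique strong solution on $[0,\tau_\infty)$. On $[0,\tau_R]$ all velocities are bounded by $R$ (for $t>0$; at $t=0$ we may first localize on $\{\max_i|\bV_0^i|< R\}\in\mathcal F_0$, which exhausts $\Omega$ as $R\to\infty$). The function $m_q^{N_p}$ is $C^2$ in the velocities since $q\ge2$, and the positions live on the compact torus. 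Hence Itô's formula applied to $m_q^{N_p}(\bZ_{t\wedge\tau_R})$ gives
\[
  m_q^{N_p}(t\wedge\tau_R)=m_q^{N_p}(0)+\int_0^{t\wedge\tau_R}\mathcal L_{N_p,s}m_q^{N_p}(\bZ_s)\diff s+\mathcal M_{t\wedge\tau_R},
\]
where the stochastic integral has integrand $\frac{1}{N_p}\sqrt{2\nu\Theta_{\mathrm{LB}}}\,q|\bV^i|^{q-2}\bV^i$, bounded by a constant depending on $R$ up to $\tau_R$. The stopped martingale $\mathcal M_{t\wedge\tau_R}$ is therefore a true martingale with zero mean.

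Taking expectations and using \eqref{eq:Mq-generator} together with $q(\bar\alpha_a-\nu+\zeta)\le\lambda$ and $m_q^{N_p}\ge0$ yields
\[
  \phi_R(t):=\mathbb E\,m_q^{N_p}(t\wedge\tau_R)\le \mathbb E\,m_q^{N_p}(0)+Ct+\lambda\int_0^t\phi_R(s)\diff s .
\]
Here we bound $\int_0^{t\wedge\tau_R}m_q^{N_p}(s)\,\diff s\le\int_0^t m_q^{N_p}(s\wedge\tau_R)\,\diff s$, which uses nonnegativity. The function $\phi_R$ is finite because it is bounded by $\max(R^q,\mathbb E\,m_q^{N_p}(0))$. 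Gronwall's lemma, with the nondecreasing forcing $\mathbb E\,m_q^{N_p}(0)+Ct$, gives $\phi_R(t)\le(\mathbb E\,m_q^{N_p}(0)+CT_{\mathrm{fin}})e^{\lambda T_{\mathrm{fin}}}=:B$ for $t\le T_{\mathrm{fin}}$, uniformly in $R$.

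Non-blow-up then follows. On $\{\tau_R\le T_{\mathrm{fin}}\}$ some particle has $|\bV^i_{\tau_R}|\ge R$ by path continuity, so $m_q^{N_p}(T_{\mathrm{fin}}\wedge\tau_R)\ge R^q/N_p$. Consequently $\mathbb P(\tau_R\le T_{\mathrm{fin}})\le N_pB/R^q\to0$. Hence $\mathbb P(\tau_\infty\le T_{\mathrm{fin}})=0$ for each $T_{\mathrm{fin}}$, and so $\tau_\infty=\infty$ almost surely. Finally, $m_q^{N_p}(t\wedge\tau_R)\to m_q^{N_p}(t)$ almost surely, and Fatou's lemma yields \eqref{eq:finite-time-Mq-bound}.

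The only delicate point is the justification of the martingale and Itô step. The expectation of the initial data is finite but not bounded, and this is why we localize the initial condition as well as the path. Combining the two localizations, with limits taken by monotone convergence, handles it. The remaining steps are routine.
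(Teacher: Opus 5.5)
Your argument is correct and matches the paper's proof: It\^o's formula for the stopped process $m_q^{N_p}(t\wedge\tau_R)$ with a zero-mean stopped martingale, the generator bound \eqref{eq:Mq-generator} plus nonnegativity to get a Gronwall inequality uniform in $R$, Markov's inequality on $\{\tau_R\le T_{\mathrm{fin}}\}$ via $m_q^{N_p}(\tau_R)\ge R^q/N_p$, and Fatou's lemma. Your extra localization of the initial datum is harmless but unnecessary: on $(0,t\wedge\tau_R]$ the stochastic integrand is already bounded by a constant times $R^{q-1}$, and the integral is empty when $\tau_R=0$. Your crude finiteness bound for $\phi_R$ should read $R^q+\mathbb E\,m_q^{N_p}(0)$ rather than $\max(R^q,\mathbb E\,m_q^{N_p}(0))$, which affects nothing.
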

The proof is in \ref{app:sub:finite-time-moment}.

\begin{proposition}[Global well-posedness of the \(N_p\)-particle system]
\label{prop:wp-finiteN}
Under the hypotheses of Lemma~\ref{lem:moment-generator} with \(q=2\),
assume \(\mathbb E\,m_2^{N_p}(0)<\infty\). Then the \(N_p\)-particle
system \eqref{eq:nudged-particles} has a unique global strong solution on
\([0,\infty)\).
\end{proposition}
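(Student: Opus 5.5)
The plan is to combine Lemma~\ref{lem:local-wp} with Corollary~\ref{cor:finite-time-moment}, taking $q=2$. Lemma~\ref{lem:local-wp} already gives a unique strong solution of \eqref{eq:nudged-particles} on $[0,\tau_\infty)$, where $\tau_\infty=\lim_{R\to\infty}\tau_R$ is the velocity exit time. So only two things remain: show $\tau_\infty=\infty$ almost surely, and make sure uniqueness on $[0,\infty)$ follows from uniqueness up to each $\tau_R$.

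For non-explosion, fix $T_{\mathrm{fin}}<\infty$ and apply It\^o's formula to $m_2^{N_p}(\bZ_{t\wedge\tau_R})$. Up to $\tau_R$ all coefficients are bounded and locally Lipschitz, so the stochastic integral $\frac{2}{N_p}\sum_i\sqrt{2\nu\Theta_{\mathrm{LB}}}\int_0^{t\wedge\tau_R}\bV_s^i\cdot\diff\bW_s^i$ has bounded integrand and is a true martingale with zero mean. The generator inequality \eqref{eq:Mq-generator} with $q=2$ then gives
\[
\mathbb E\,m_2^{N_p}(t\wedge\tau_R)\le \mathbb E\,m_2^{N_p}(0)+\int_0^t\Bigl(2\max\{\bar\alpha_a-\nu+\zeta,0\}\,\mathbb E\,m_2^{N_p}(s\wedge\tau_R)+C_2(\zeta)\Bigr)\diff s .
\]
By Gr\"onwall, $\mathbb E\,m_2^{N_p}(t\wedge\tau_R)$ is bounded by the right-hand side of \eqref{eq:finite-time-Mq-bound}, uniformly in $R$. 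On the event $\{\tau_R\le T_{\mathrm{fin}}\}$, some particle has $|\bV^i_{\tau_R}|=R$, so $m_2^{N_p}(\tau_R)\ge R^2/N_p$. Markov's inequality gives $\mathbb P(\tau_R\le T_{\mathrm{fin}})\le N_p C(T_{\mathrm{fin}})/R^2\to0$. Hence $\tau_\infty>T_{\mathrm{fin}}$ almost surely, and letting $T_{\mathrm{fin}}\to\infty$ along the integers gives $\tau_\infty=\infty$ almost surely. This is exactly the content of Corollary~\ref{cor:finite-time-moment}, which I will simply cite.

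Finally, I will patch the local solutions together. Strong solutions on $[0,\tau_R]$ are consistent for increasing $R$ by pathwise uniqueness from Lemma~\ref{lem:local-wp}. They therefore define a single continuous adapted process on $[0,\tau_\infty)=[0,\infty)$. For uniqueness, take any other global strong solution and stop both at the minimum of their exit times from the ball of radius $R$. There the coefficients are globally Lipschitz: by Lemma~\ref{lem:reg-bounds}, the empirical measures of configurations with velocities in a ball have bounded fourth moments, and $W_2$ between empirical measures with the same labels is bounded by the configuration distance. So the two solutions agree up to that time, and letting $R\to\infty$ they agree for all times.

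The step needing the most care is the martingale and localization argument, specifically justifying the zero-mean stochastic term. Its integrand is bounded only up to $\tau_R$, so the stopping before taking expectations is essential. Beyond that, I expect no obstacle, since the hard estimates are already contained in Lemmas~\ref{lem:local-wp}, \ref{lem:moment-generator} and Corollary~\ref{cor:finite-time-moment}.
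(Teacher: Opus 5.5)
Your proposal is correct and follows the paper's route. Lemma~\ref{lem:local-wp} gives the pathwise-unique maximal solution up to $\tau_\infty$, and Corollary~\ref{cor:finite-time-moment} with $q=2$ (whose stopped It\^o--Gr\"onwall--Markov argument you reproduce) gives $\tau_\infty=\infty$ almost surely, so global uniqueness is inherited from the local one; your extra patching and stopped-uniqueness discussion is harmless but not needed.
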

The proof is in \ref{app:sub:wp-finiteN}.

\section{Numerical experiments}
\label{sec:numerics}
We test the proposed moment-nudging methods on particle-in-cell
discretizations of the Vlasov-Poisson system. 
Each experiment advances five runs with the same PIC structure, the same
collision model, and the same external driver in the driven test: a
true run, an unassimilated run ($S=\mathrm{none}$), and three nudged
runs ($S=A,B,C$). The four runs $S$ start from a common initial
distribution that differs from the true one, so they differ from one
another only in the feedback. In all Method~C runs, we set
$\varepsilon=10^{-3}$.

For each run $S$, let $\rho^S(t,\bx)$, $\bu^S(t,\bx)$, $T^S(t,\bx)$ denote
the hydrodynamic fields deposited from the particles and converted to
primitive variables by
\eqref{eq:moments}-\eqref{eq:primitive-from-conserved}, and let
$(\rho^\dagger,\bu^\dagger,T^\dagger)$ denote the true fields.
Errors are measured in the root-mean-square norm and its
true-density-weighted variant:
\[
  \|g\|_{\mathrm{rms}}
  :=
  \Bigl(
    \frac{1}{|\mathbb T^d|}
    \int_{\mathbb T^d} |g|^2 \diff \bx
  \Bigr)^{1/2},
  \qquad
  \|g\|_{\mathrm{rms},\rho^\dagger}
  :=
\left(
    \frac{\int_{\mathbb T^d} \rho^\dagger\,|g|^2 \diff \bx}
         {\int_{\mathbb T^d} \rho^\dagger \diff \bx}
  \right)^{1/2}.
\]
The moment errors of run $S$ are
\[
\begin{aligned}
    e_\rho^S(t)
  :=&
  \|\rho^S(t)-\rho^\dagger(t)\|_{\mathrm{rms}},\\
  e_u^S(t)
  :=&
  \|\bu^S(t)-\bu^\dagger(t)\|_{\mathrm{rms},\rho^\dagger},\\
  e_T^S(t)
  :=&
  \|T^S(t)-T^\dagger(t)\|_{\mathrm{rms},\rho^\dagger} .
\end{aligned}
\]
Let $f_h^S$ and $f_h^\dagger$ denote the mass-normalized phase-space
histograms of the particles on a common fixed $(\bx,\bv)$ grid. The
phase-space error is
\[
  e_f^S(t)
  :=
  \|f_h^S(t)-f_h^\dagger(t)\|_{L^2_{\bx,\bv}} .
\]
For a comparison window $[\tau,T_{\mathrm{fin}}]$ and each error channel
$*\in\{\rho,u,T,f\}$, define
\[
  \bar e_*^S(\tau,T_{\mathrm{fin}})
  :=
  \frac{1}{T_{\mathrm{fin}}-\tau}
  \int_\tau^{T_{\mathrm{fin}}} e_*^S(t) \diff t,
  \qquad
  R_*^S(\tau,T_{\mathrm{fin}})
  :=
  \frac{\bar e_*^S(\tau,T_{\mathrm{fin}})}
       {\bar e_*^{\mathrm{none}}(\tau,T_{\mathrm{fin}})} .
\]
All tables report $R_*^S(\tau,T_{\mathrm{fin}})$.
\subsection{Driven recovery with BGK collisions}
\label{sec:num-driven}
The first experiment tests recovery from a wrong initial Maxwellian
under external driving.  On the torus of length \(L=4\pi\), the reference solution is initialized as the weakly perturbed Maxwellian
\[
  f^\dagger_0(x,v)
  =
  \bigl(1+\alpha\cos(k_0x)\bigr)\mathcal M_{1,0,1}(v),
  \qquad
  \alpha=10^{-2},\quad k_0=0.5,
\]
whereas the assimilating simulations start from the wrong Maxwellian
\[
  g_0(x,v)=\bigl(1+\beta\cos(k_0x)\bigr)\mathcal M_{1,0.3,1.5}(v),
\]
with wrong bulk velocity and temperature and a mis-specified density
modulation of amplitude \(\beta\), set per the setup below.
All runs are evolved under the external traveling wave
\[
  E_{\mathrm{ext}}(x,t)=E_0\sin(k_d x-\omega_d t),
  \qquad k_d=1,
\]
and the BGK collisions \eqref{eq:Q-BGK} with collision frequency \(\nu\).
We use two driven BGK setups.  Setup~I has a weak external wave,
moderate collisions, and a density-modulated initialization, with
\[
  E_0=0.05,\qquad \omega_d=1.3,\qquad \nu=0.5,\qquad \beta=0.3 .
\]
Setup~II has a stronger resonant wave, weak collisions, and a spatially
uniform initialization, with
\[
  E_0=0.2,\qquad \omega_d=2.0,\qquad \nu=0.05,\qquad \beta=0 .
\]
For each run, we use \(\Delta t=0.05\) and \(1000\) steps, so
\(T_{\mathrm{fin}}=50\). Setup~I is repeated over five independent particle
initializations. All three methods use unit scaling parameters, and
Methods~A and~C use \(V_*=1\).

Table~\ref{tab:e2} reports the window-averaged improvement ratios
\(R_x^S(10,50)\).  In Setup~I, all three methods reduce the bulk-velocity and temperature errors by an order of magnitude (\(R_u\le0.119\), \(R_T\le0.065\)) and the phase-space error by a factor of four (\(R_f\approx0.25\)). Method~A gives the smallest \(R_\rho\), \(R_u\), and \(R_f\), and Method~C the smallest \(R_T\).
In Setup~II, the recovery is smaller, and the same ordering occurs. The
density channel shows little improvement (\(R_\rho\) between \(0.959\)
and \(1.044\)) and with the uniform initialization (\(\beta=0\)), the unassimilated
density already stays close to the true density, so \(R_\rho\) compares two
small errors.
\begin{table}[t]
\centering\small
\begin{tabular}{@{}llcccc@{}}
\toprule
Setup & Method & \(R_\rho\) & \(R_u\) & \(R_T\) & \(R_f\) \\
\midrule
Setup I: weak driver, moderate BGK
& \textbf{A} & \(\mathbf{0.811}\) & \(\mathbf{0.102}\) & \(0.065\) & \(\mathbf{0.246}\) \\

& B          & \(0.830\) & \(0.107\) & \(0.065\) & \(0.248\) \\
& C          & \(0.821\) & \(0.119\) & \(\mathbf{0.049}\) & \(0.247\) \\
\midrule
Setup II: strong driver, weak BGK
& \textbf{A} & \(\mathbf{0.959}\) & \(\mathbf{0.193}\) & \(0.133\) & \(\mathbf{0.371}\) \\

& B          & \(1.044\) & \(0.217\) & \(0.141\) & \(0.401\) \\
& C          & \(0.996\) & \(0.226\) & \(\mathbf{0.126}\) & \(0.379\) \\
\bottomrule
\end{tabular}
\caption{Driven recovery from a wrong initial Maxwellian
(\(N_p=5\times10^5\), \(N_x=128\), \(\Delta t=0.05\), \(T_{\mathrm{fin}}=50\)).
Each entry is \(R_*^S(10,50)\), the mean error of method \(S\) over \(t\in[10,50]\) relative to the unassimilated run.
Setup~I entries are means over five particle initializations
(seed-to-seed standard deviation at most \(0.011\)).  Setup~I uses a weak
external wave, moderate BGK collisions, and a density-modulated initialization,
\((E_0,k_d,\omega_d,\nu,\beta)=(0.05,1,1.3,0.5,0.3)\).  Setup~II uses a
stronger resonant wave, weaker collisions, and a uniform initialization,
\((E_0,k_d,\omega_d,\nu,\beta)=(0.2,1,2.0,0.05,0)\).}
\label{tab:e2}
\end{table}

The error curves (Figure~\ref{fig:e2-errors}) show that the nudged methods reduce the bulk-velocity and temperature errors rapidly in both regimes. The profile snapshots at \(t=25\) (Figure~\ref{fig:e2-profiles}) show that the nudged runs move the bulk velocity and temperature close to the true values, while the unassimilated run remains near its wrong initial Maxwellian.

\begin{figure}
    \centering
    \includegraphics[width=\linewidth]{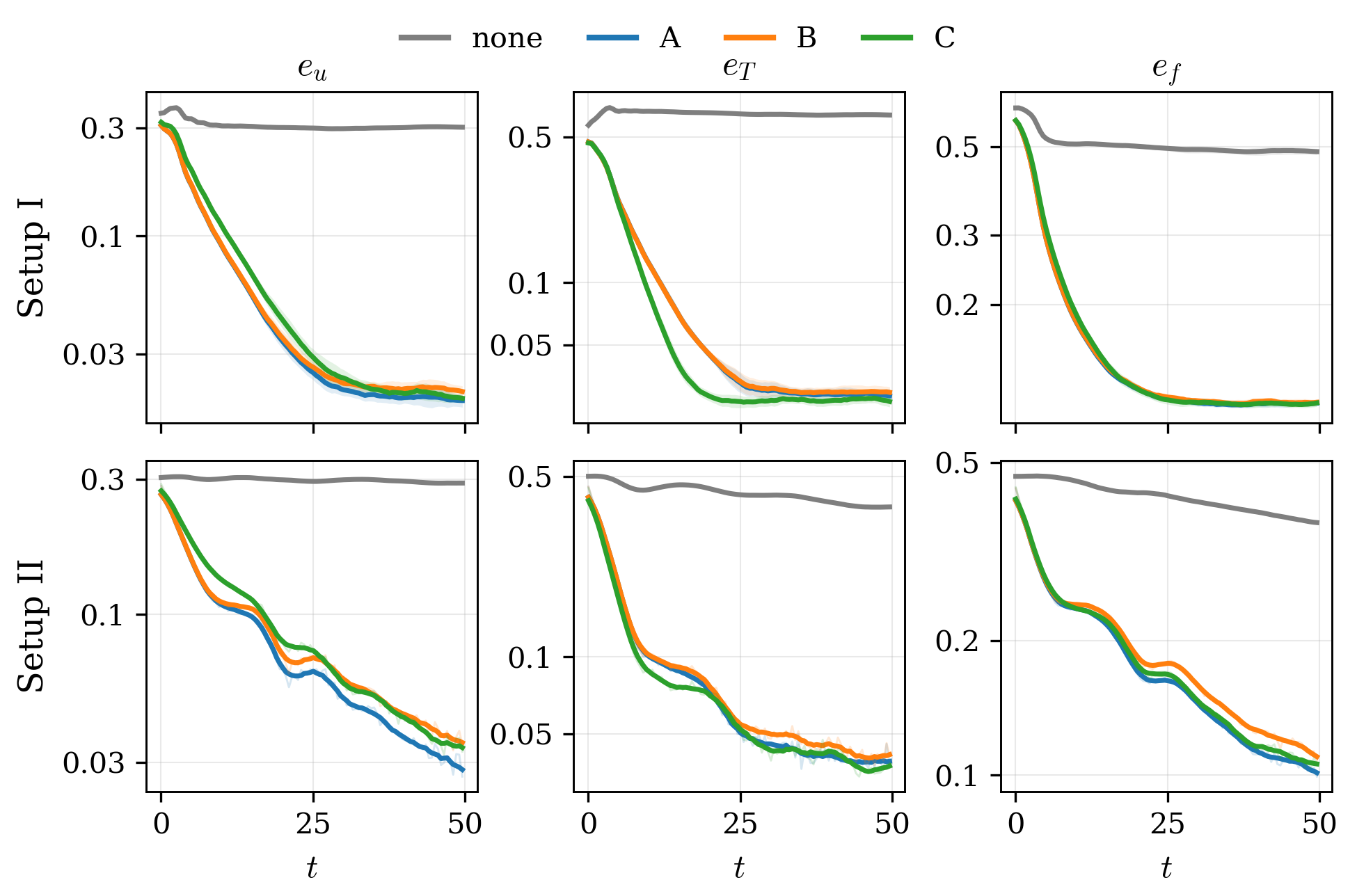}
    \caption{\textbf{Driven recovery error dynamics.}
    Synchronization errors \(e_u\), \(e_T\), and \(e_f\) versus time for the two
    driven BGK setups. The top row shows Setup~I (weak driver, moderate
    collisions), the bottom row Setup~II (stronger near-resonant driver, weak
    collisions). All three methods track one another closely, with
    Method~A attaining the lowest bulk-velocity and phase-space errors and
    Method~C the lowest temperature error.}
    \label{fig:e2-errors}
\end{figure}

\begin{figure}[t]
    \centering
    \includegraphics[width=\linewidth]{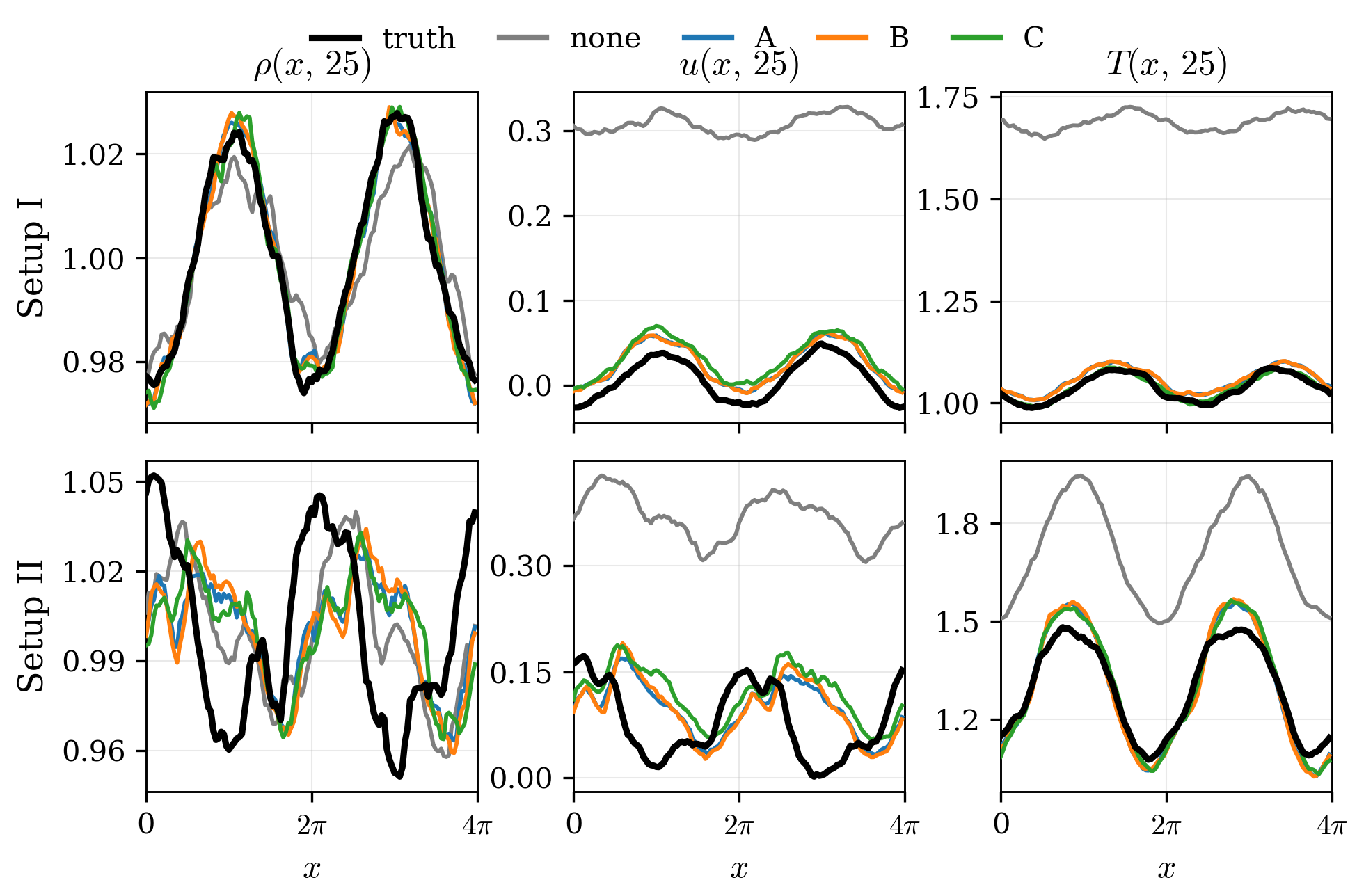}
    \caption{\textbf{Moment-field profiles at \(t=25\).}
    Density \(\rho(x)\), bulk velocity \(u(x)\), and temperature \(T(x)\) for
    the true solution, the unassimilated run, and the three nudged methods. The
    unassimilated solution remains close to the wrong initialization in \(u\) and \(T\),
    while the nudged methods synchronize these hydrodynamic moments with the
    true solution in both driven BGK regimes.}
    \label{fig:e2-profiles}
\end{figure}

\subsection{Undriven relaxation with conservative collisions}
\label{sec:num-conservative}

The second experiment removes the external driver and makes the collision step
exactly conservative, so collisions relax the velocity-space shape but does not touch the collision-invariant moments \eqref{eq:Q-conservation}.

On the torus of length \(L=4\pi\), the reference solution is initialized as the Maxwellian
current wave
\[
  f_0^\dagger(x,v)
  =
  \mathcal M_{1,u_0^\dagger(x),T_0}(v),
  \qquad
  u_0^\dagger(x)=0.25+0.8\sin(x/2),
  \qquad
  T_0=0.5,
\]
and the assimilating simulation starts from a current wave with the wrong bulk
velocity and a bimodal velocity shape,
\[
  g_0(x,v)
  =
  \tfrac12 \mathcal M_{1,u_0(x)-a,\theta}(v)
  +\tfrac12 \mathcal M_{1,u_0(x)+a,\theta}(v),
  \quad
  u_0(x)=-0.25-0.3\sin\bigl(x/2+\pi/3\bigr),
\]
with \(a=0.55\) and \(\theta=T_0-a^2\), so that its local temperature equals
the true temperature \(T_0\), while its bulk velocity and its velocity-space shape are
both wrong.  All runs use the conservative Dougherty-type particle map \eqref{eq:Q-D}-\eqref{eq:D-map} with \(\nu=0.5\), which conserves mass, momentum, and kinetic energy exactly in each collision cell.

Each run uses \(N_p=10^5\) particles, \(N_x=128\), \(\Delta t=0.002\), and
\(T_{\mathrm{fin}}=15\), with feedback at every step, matched scaling parameters
\(\gamma_1=\gamma_2=\gamma_3=3\) for Methods~A and~B and \(\gamma=3\) for
Method~C, and metric scale \(V_*=1\).
Table~\ref{tab:cc1d} reports the improvement ratios \(R_x^S(10,15)\) over
three independent particle initializations.  All three methods drive the
density, bulk-velocity, and temperature errors one to two orders of magnitude
below the unassimilated level and cut the phase-space error by more than half
(\(R_f\approx0.45\)).  Method~C has the smallest bulk-velocity and
temperature errors (\(R_u=0.002\), \(R_T=0.019\)), while Method~B
gives the smallest density error (\(R_\rho=0.008\)).

\begin{table}[t]
\centering\small
\begin{tabular}{@{}lcccc@{}}
\toprule
Method & \(R_\rho\) & \(R_u\) & \(R_T\) & \(R_f\) \\
\midrule
\textbf{A} & \(0.015\) & \(0.005\) & \(0.045\) & \(\mathbf{0.444}\) \\
B          & \(\mathbf{0.008}\) & \(0.016\) & \(0.061\) & \(0.447\) \\
C          & \(0.012\) & \(\mathbf{0.002}\) & \(\mathbf{0.019}\) & \(0.446\) \\
\bottomrule
\end{tabular}
\caption{Undriven relaxation with exactly conservative Dougherty-type
collisions (\(N_p=10^5\), \(N_x=128\), \(\Delta t=0.002\),
\(T_{\mathrm{fin}}=15\), \(\nu=0.5\)).  Each entry is \(R_*^S(10,15)\), the
mean error of method \(S\) over \(t\in[10,15]\) and over three particle
initializations, divided by the corresponding mean error of the unassimilated
run.}
\label{tab:cc1d}
\end{table}

Figure~\ref{fig:cc1d-errors} shows the error dynamics: the nudged runs
synchronize within a few time units and hold that level, while the
unassimilated errors stay at the level set by the wrong equilibrium.
In the final velocity marginal
\(\int f(x,v,T_{\mathrm{fin}})\diff x\) (Figure~\ref{fig:cc1d-velmarg}), the
three nudged methods lie on the true Maxwellian, with excess kurtosis
\(0.24\) against the true value of \(0.20\), whereas the unassimilated marginal is a
Maxwellian centered at the wrong bulk velocity.

\begin{figure}[t]
    \centering
    \includegraphics[width=\linewidth]{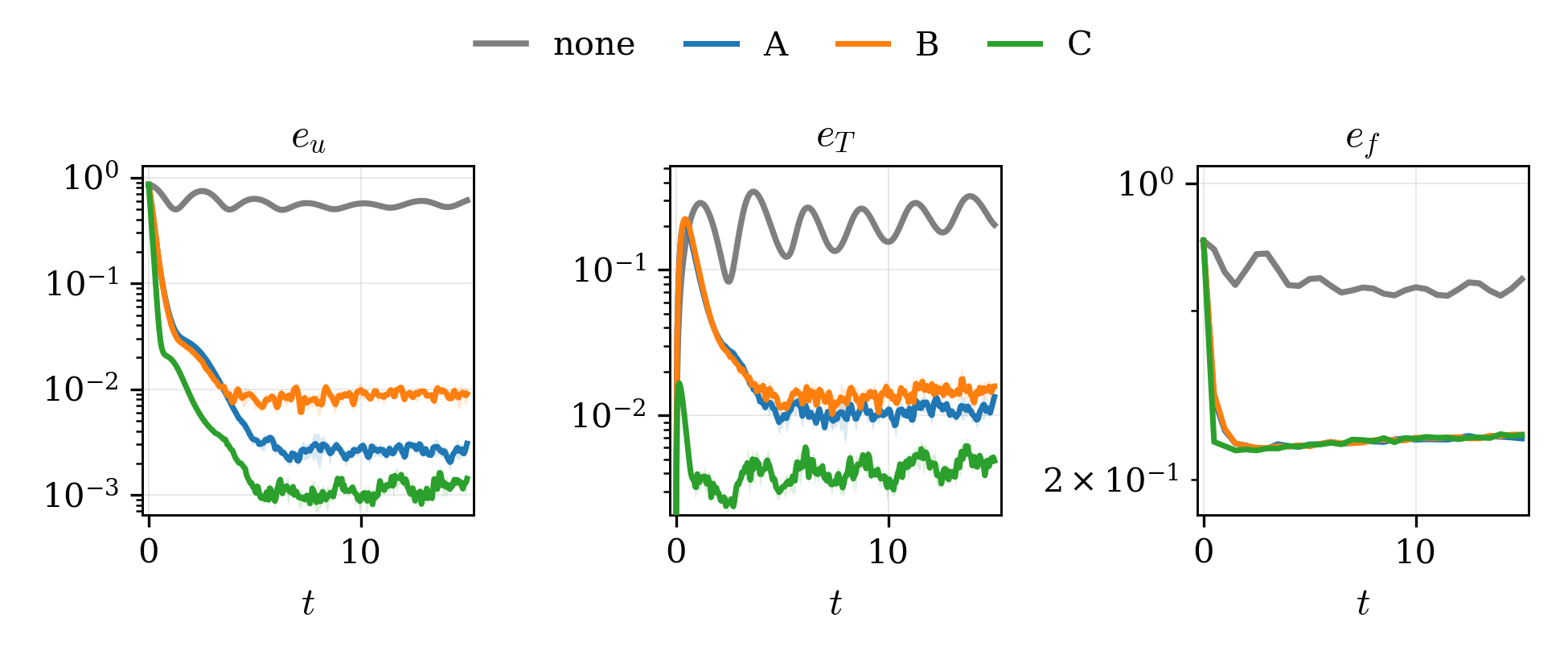}
    \caption{\textbf{Conservative-collision relaxation error dynamics.}
    Synchronization errors \(e_u\), \(e_T\), and \(e_f\) versus time for the
    unassimilated run and the three nudged methods (mean over three
    particle initializations, with a band of one standard deviation).
    The nudged methods drive \(e_u\) and \(e_T\) one
    to two orders of magnitude below the unassimilated level and cut \(e_f\)
    by more than half.}
    \label{fig:cc1d-errors}
\end{figure}

\begin{figure}[t]
    \centering
    \includegraphics[width=0.5\linewidth]{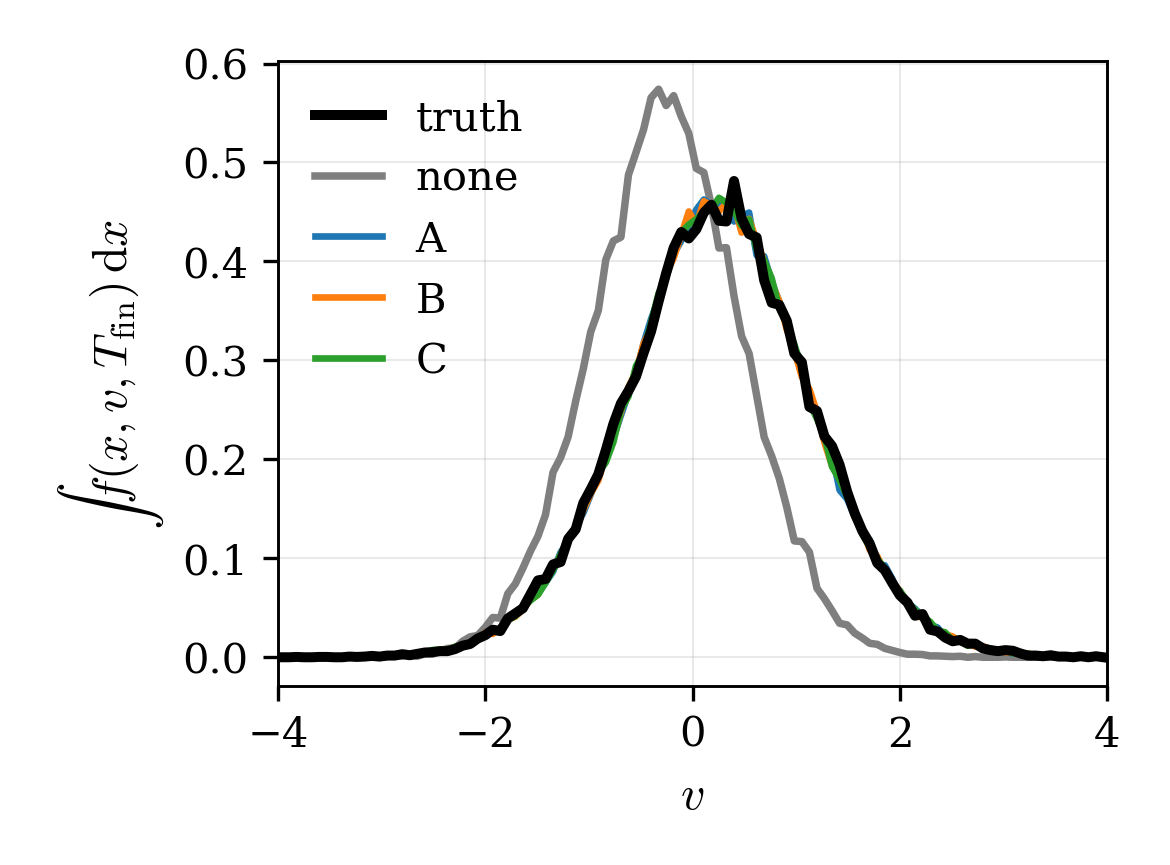}
    \caption{\textbf{Velocity marginal at the final time.}
    \(\int f(x,v,T_{\mathrm{fin}})\diff x\) for the true solution, the unassimilated
    run, and the three nudged methods.  The nudged methods recover the true marginal, while the unassimilated marginal is a Maxwellian with the wrong bulk velocity and temperature.}
    \label{fig:cc1d-velmarg}
\end{figure}

\subsection{Two-dimensional Taylor-Green relaxation}
\label{sec:num-2d}

The third experiment carries the undriven relaxation over to 2D2V, with the current wave replaced by a Taylor-Green vortex.
On \(\mathbb T^2=[0,2\pi)^2\), the reference solution is initialized
as
\[
  f_0^\dagger(\bx,\bv)
  =
  \mathcal M_{1,\bu_0^\dagger(\bx),T_0}(\bv),
  \quad
  \bu_0^\dagger(x,y)
  =
  \bigl(0.25+\sin x\cos y,\; -\cos x\sin y\bigr),
  \quad
  T_0=0.5,
\]
and the assimilating simulation starts from an initial distribution whose
bulk velocity is a weaker, phase-shifted, counter-rotating vortex with the
wrong mean flow,
\[
  \bu_0(x,y)
  =
  \bigl(-0.25-0.45\sin(x+\tfrac{\pi}{3})\cos(y+\tfrac{\pi}{4}),\;
        0.15+0.45\cos(x+\tfrac{\pi}{3})\sin(y+\tfrac{\pi}{4})\bigr),
\]
and whose peculiar velocity is an anisotropic centered Gaussian, with
variances \(0.8\) and \(0.2\) in the two velocity components but the true
temperature \(T_0\).  All runs use the conservative Dougherty-type particle map \eqref{eq:Q-D}-\eqref{eq:D-map} with \(\nu=0.5\), \(N_p=2\times10^5\) particles, a \(32^2\) spatial grid, \(\Delta t=0.002\), \(T_{\mathrm{fin}}=12\), and the same feedback cadence, scaling parameters, and metric scale \(V_*\) as in \S\ref{sec:num-conservative}.  Collisional
relaxation again fixes the long-time behavior, where the true solution relaxes toward
\(\mathcal M_{1,(0.25,0),0.75}\), while the unassimilated run relaxes toward
\(\mathcal M_{1,(-0.25,0.15),0.55}\). The vortex decays on the collisional
time scale, with the spatial standard deviation of \(u_x^\dagger\) falling from
\(0.50\) at \(t=0\) to \(0.28\) at \(t=1\) and \(0.014\) at
\(T_{\mathrm{fin}}\).

Table~\ref{tab:cc2d} reports \(R_*^S(8,12)\) over three particle
initializations, and Figure~\ref{fig:cc2d-errors} the corresponding error
dynamics.  The diagnostics are the moment errors: the four-dimensional
histogram behind \(e_f\) is not resolved at this particle count.  For the
one-dimensional behavior, all three methods synchronize the
bulk velocity and temperature, with \(R_u\le0.027\) and \(R_T\le0.044\), and
reduce the density error to \(R_\rho\) between \(0.266\) and \(0.393\).
Method~A gives the smallest bulk-velocity error, Method~C the smallest temperature error, and Method~B the smallest density error.

\begin{table}[t]
\centering\small
\begin{tabular}{@{}lccc@{}}
\toprule
Method & \(R_\rho\) & \(R_u\) & \(R_T\) \\
\midrule
\textbf{A} & \(0.293\) & \(\mathbf{0.019}\) & \(0.039\) \\
B          & \(\mathbf{0.266}\) & \(0.027\) & \(0.044\) \\
C          & \(0.393\) & \(0.020\) & \(\mathbf{0.032}\) \\
\bottomrule
\end{tabular}
\caption{2D2V Taylor-Green relaxation with exactly conservative
Dougherty-type collisions (\(N_p=2\times10^5\), \(32^2\) grid,
\(\Delta t=0.002\), \(T_{\mathrm{fin}}=12\), \(\nu=0.5\)).  Each entry is
\(R_*^S(8,12)\), the mean error of method \(S\) over \(t\in[8,12]\) and over
three initializations, divided by the corresponding mean error of
the unassimilated run.}
\label{tab:cc2d}
\end{table}

\begin{figure}[t]
\centering
\includegraphics[width=\linewidth]{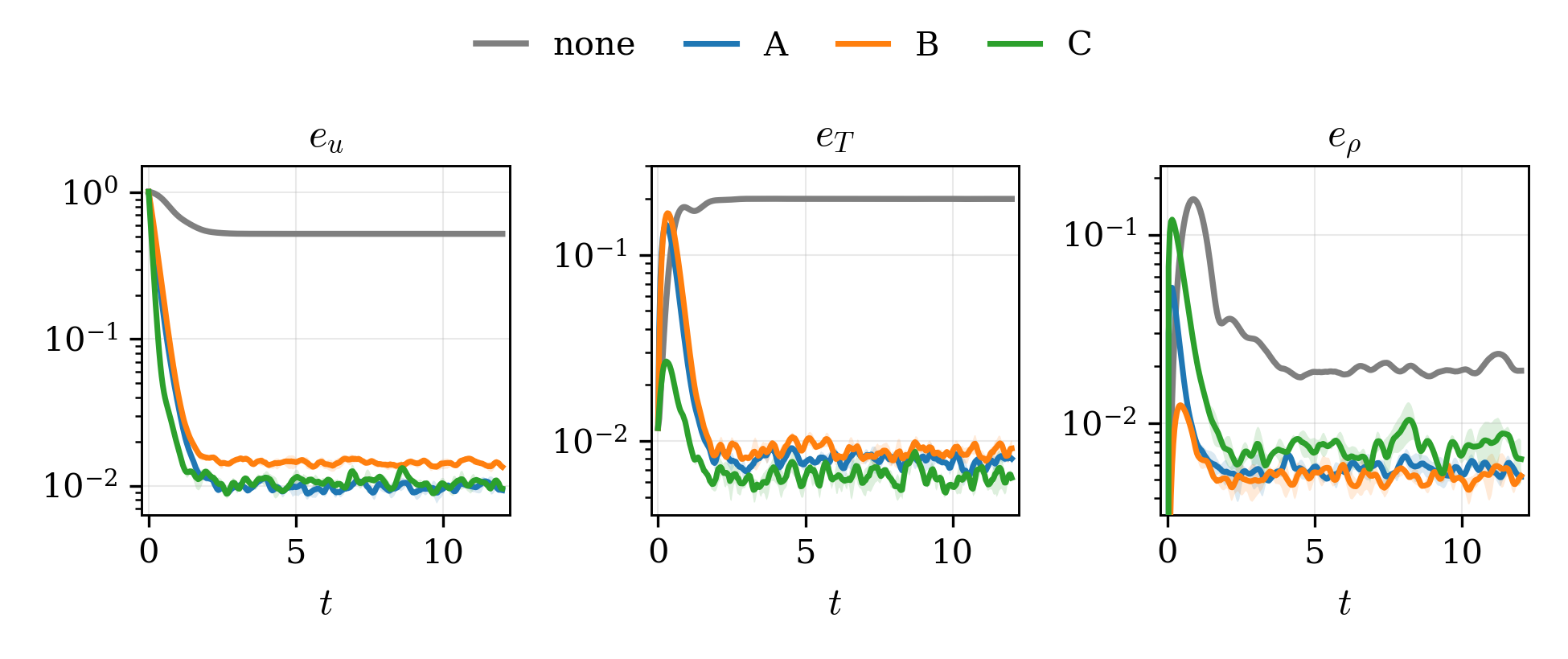}
\caption{\textbf{2D2V Taylor-Green relaxation error dynamics.}
Synchronization errors \(e_u\), \(e_T\), and \(e_\rho\) versus time for the
unassimilated run and the three nudged methods (mean over three
initializations, with a band of one standard deviation).
The nudged methods synchronize the bulk velocity and temperature and
hold the density error well below the unassimilated level.}
\label{fig:cc2d-errors}
\end{figure}

Figure~\ref{fig:cc2d-ufield} shows the bulk-velocity component \(u_x(x,y)\) at
\(t=1\), while the vortex is still strong.  The unassimilated run carries its
counter-rotating, mean-shifted vortex.  The three nudged methods
reproduce the true vortex cells.

\begin{figure}[t]
\centering
\includegraphics[width=\linewidth]{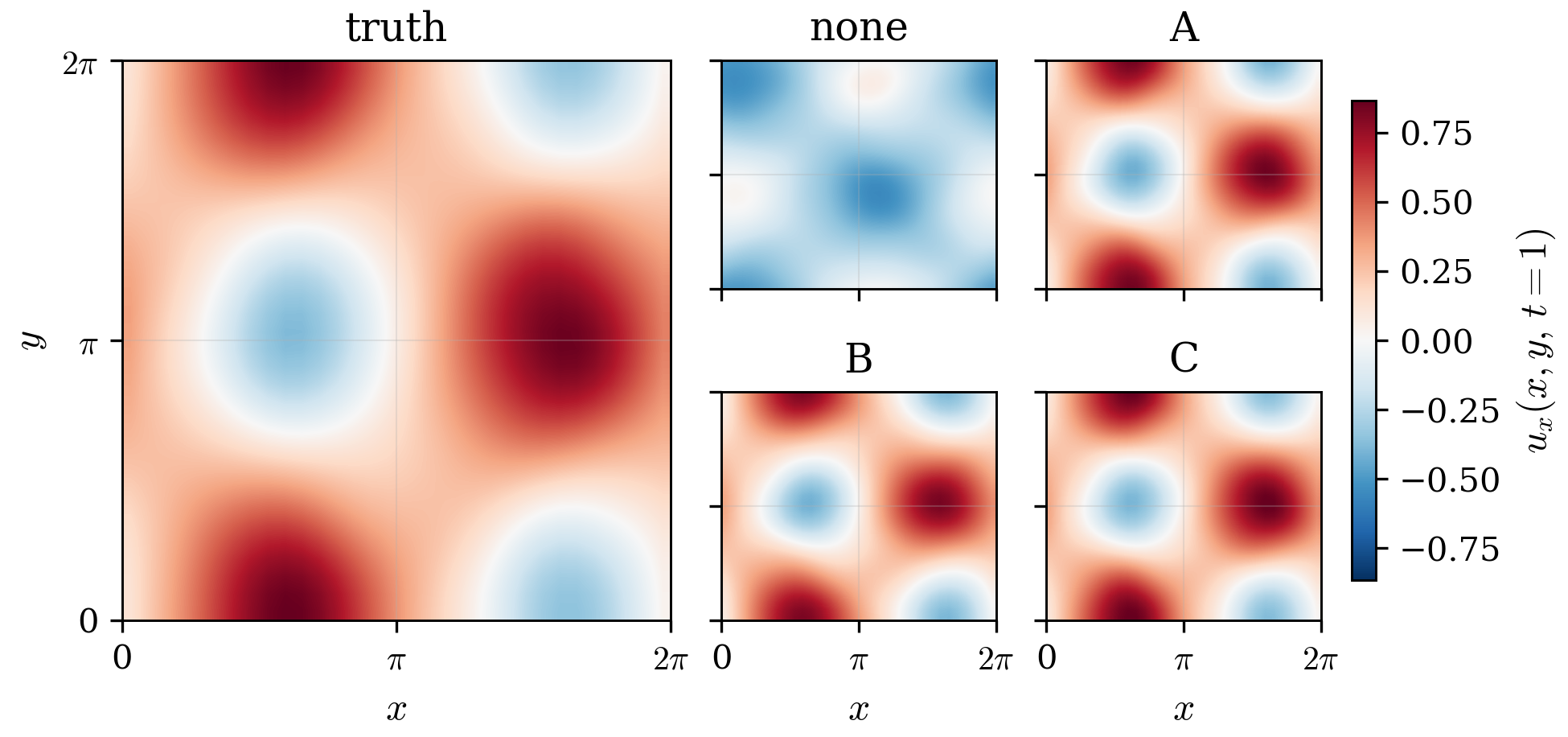}
\caption{\textbf{Bulk-velocity field at \(t=1\).}
The component \(u_x(x,y)\) of the 2D2V Taylor-Green run, shown before the
vortex decays under the collisions.  The unassimilated run carries the wrong
vortex and mean flow.  The nudged methods recover the true vortex
cells.}
\label{fig:cc2d-ufield}
\end{figure}

\subsection{Limitation: two-stream instability}
\label{sec:num-limit}

The final experiment quantifies the limitations of moment feedback.  On the torus of length \(L=4\pi\), the reference solution is initialized as the two-stream state
\[
  f_0^\dagger(x,v)
  =
  \frac{1}{\sqrt{2\pi}}\,v^2 e^{-v^2/2}\,\bigl(1+\alpha\cos(k_0x)\bigr),
  \qquad \alpha=0.05,\quad k_0=0.5,
\]
with moments \((\rho,u,T)=(1,0,3)\). Its dynamics are governed by its own
self-consistent field, with weak BGK collisions (with $\nu=0.05$) and no external driver. In this case,
the two-stream instability grows from the seeded perturbation and
saturates into a phase-space vortex (Figure~\ref{fig:obstruction-fxv},
top row).  All runs
use \(N_p=5\times10^5\), \(N_x=128\), \(\Delta t=0.05\),
\(T_{\mathrm{fin}}=30\), and unit scaling parameters,
with \(V_*=1\) for Methods~A and~C.

The two assimilating initializations carry the same wrong hydrodynamic moments,
$(u,T)=(0.3,2)$ against the true $(0,3)$, and differ only in the
velocity-space shape.  The first prior has the wrong shape, from
the uniform Maxwellian \(\mathcal M_{1,0.3,2}\) (with \(\nu=0.05\)),
Methods~A and~B synchronize the observed bulk velocity and
temperature, with \(R_u=0.21\) and
\(R_T=0.13\), and Method~C does so more
slowly, with \(R_u=0.51\) and \(R_T=0.29\).  The phase-space
error improves modestly for all three, with \(R_f=0.65\)-\(0.71\), but the final velocity
marginals are Maxwellians with the observed bulk velocity and
temperature, which is not the true bimodal profile
(Figure~\ref{fig:obstruction}, left).

Second, using the true two-stream shape but with wrong moments, given by the affine image
\(v\mapsto u_*+\sqrt{T_*/3}\,v\) with \((u_*,T_*)=(0.3,2)\) and
\(\nu=0.01\), the model provides the right form, and the feedback corrects the
moments.  Figure~\ref{fig:obstruction-fxv} shows \(f(x,v)\) at
\(t=0,15,20,30\). All three methods correct the moments while leaving
the two-stream structure intact, and their runs develop the two-stream
instability and track the true phase-space vortex, with
\(R_f=0.48\), \(0.48\), and \(0.72\) for A, B, and~C.
Figure~\ref{fig:obstruction} shows the final velocity marginal, where the nudged
marginals lie on the true bimodal profile, while the unassimilated run
has the wrong moments. 
This experiment delimits, rather than contradicts, the mechanism of \S\ref{sec:obstruction}. The mechanism there relies on either collisions strong enough to
damp the velocity-shape error outright, or states close enough to local
equilibrium that the shape is dissipatively slaved to the observed moments. In the
present regime, weak collisions and a true solution held far from equilibrium by
its self-consistent field, the shape is neither observed nor damped, so
moment feedback synchronizes the moments it observes while the shape must be
supplied by the prior. When the prior does supply the correct shape, the
corrected moments suffice for the assimilated run to develop the true
instability and track the phase-space vortex.
\begin{figure}[t]
\centering
\includegraphics[width=\linewidth]{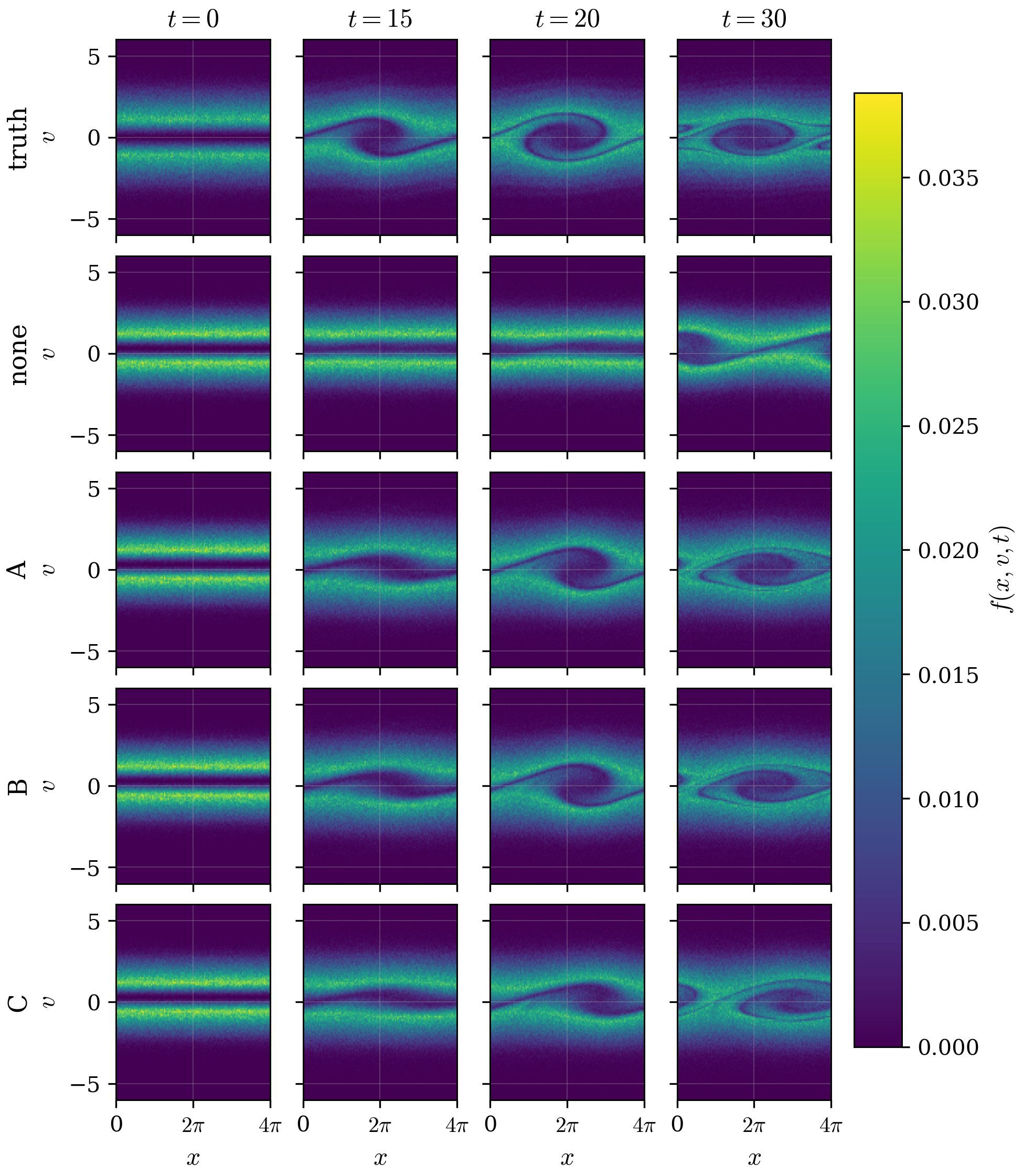}
\caption{\textbf{Two-stream phase space.}  \(f(x,v)\) at \(t=0,15,20,30\)
for the true solution and Methods~A, B, and~C.  Each assimilating run starts
from the true two-stream shape with wrong bulk velocity and temperature.
The true two-stream instability saturates into a phase-space vortex.  The
nudged runs correct the moments, develop the instability, and track the
true vortex.}
\label{fig:obstruction-fxv}
\end{figure}

\begin{figure}[t]
\centering
\includegraphics[width=0.8\linewidth]{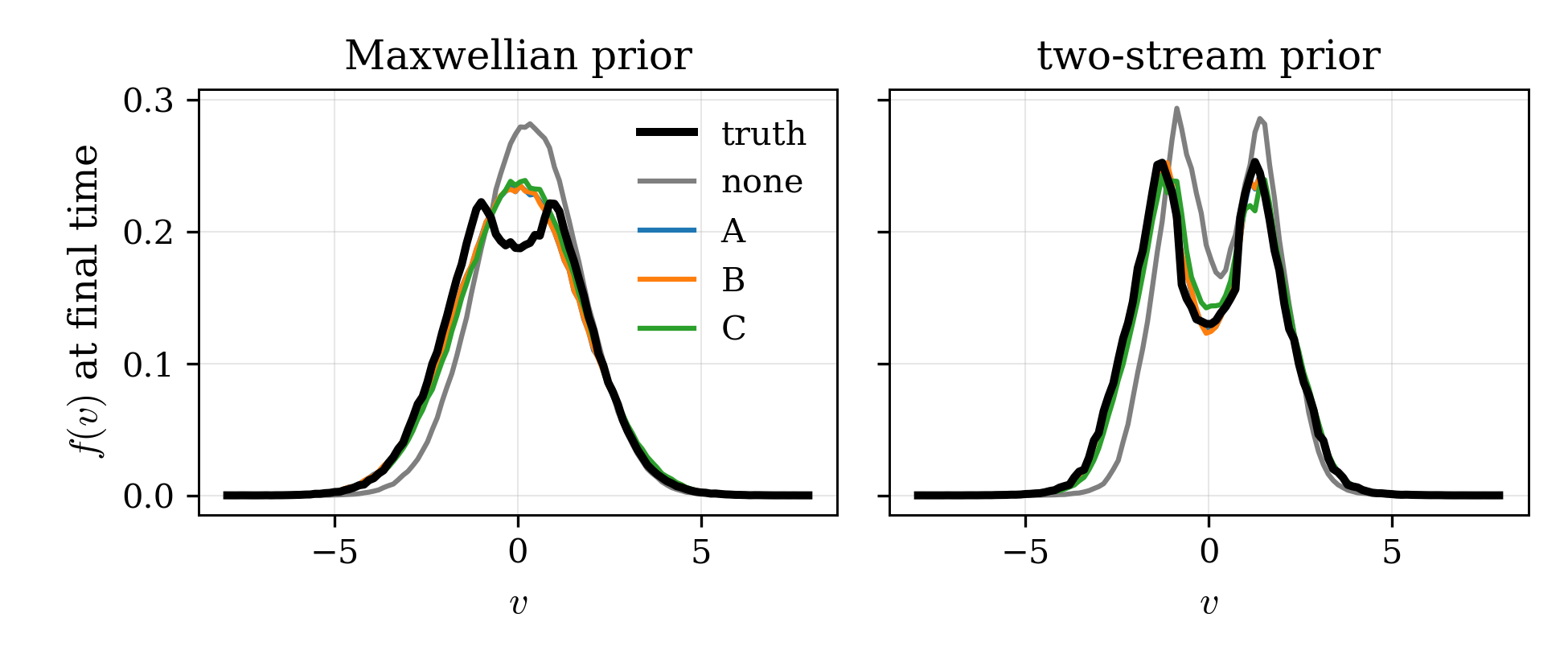}
\caption{\textbf{Final velocity marginals of the two initializations.}
\(\int f(x,v,T_{\mathrm{fin}})\diff x\) for the true solution, the unassimilated
run, and Methods~A, B, and~C.  Left: from the Maxwellian initialization, the
nudged marginals are Maxwellians carrying the observed bulk velocity and
temperature and miss the true bimodal profile.  Right: from the
two-stream initialization, the nudged marginals lie on the true bimodal profile,
whereas the unassimilated marginal keeps the initialization's wrong bulk velocity and
temperature.}
\label{fig:obstruction}
\end{figure}

\section{Conclusion}

We developed a continuous data assimilation framework for PIC simulations
of collisional Vlasov-Poisson systems from hydrodynamic-moment
observations. The main difficulty is that density, bulk velocity, and
temperature are not particle state variables but instead are are functionals of the
empirical phase-space measure. We addressed this by constructing the
feedback as a phase-space gradient flow of a moment mismatch,
so that the density, momentum, and energy channels are realized by
particle operations rather than by adding separate equations for the
moments. The feedback vanishes on the moment-compatible set, and the
two-Maxwellian obstruction shows that synchronization from arbitrary
neutral homogeneous Maxwellian initializations requires observations that
distinguish bulk velocity and temperature. For Method~C, the
mass-weighted global moments of the full inhomogeneous nudged flow obey
explicit balances, with a closed equation for the global bulk velocity.
The continuous-time finite-particle scheme coupled with the
Lenard-Bernstein collision surrogate is globally well posed. 
The numerical experiments show the balance between feedback and model, where in the driven BGK and conservative-collision relaxation tests, the nudged methods reduce the bulk-velocity and temperature errors by up to two orders of magnitude in 1D1V and 2D2V, and the two-stream test shows that the feedback corrects the moments, while the velocity-space shape must be supplied by the model.

There are several possible directions for future work. First, one can establish the synchronization of the nudged dynamics or the
mean-field limit of the finite-particle scheme. Both depend on the
specific method, since the three forms differ in mismatch
functional and metric and thus each require a separate analysis. Second, real problems involve magnetic fields and non-periodic
boundary conditions, whereas the field solve and the well-posedness
theory here rely on the electrostatic neutral torus. Third, practical diagnostics often deliver unstructured and sparse data, with scattered
measurement sites and intermittent observation times, rather than the
spatially resolved, fixed-cadence moments assumed here.

\section*{Acknowledgments}
This work used Anvil at Purdue through allocation MTH260007 from the Advanced Cyberinfrastructure Coordination Ecosystem: Services \& Support (ACCESS) program\cite{boerner2023access}, which is supported by U.S. National Science Foundation grants \#2138259, \#2138286, \#2138307, \#2137603, and \#2138296. The work was support by the NVIDIA Academic Grant Program.

\ifusesupplement\else
\appendix
\section{Proofs for the moment-nudging construction}
\label{app:nudging-proofs}
This appendix includes the proofs of the observability and structure results
of \S\ref{sec:nudging}.

\subsection{Proof of Proposition~\ref{prop:obs-necessary}}
\label{app:proof:obs-necessary}
\begin{proof}[Proof of Proposition~\ref{prop:obs-necessary}]
Let $\mathcal M=\mathcal M_{1,\bu,T}$ be any homogeneous Maxwellian. Its
spatial density is $\densr{\mathcal M}\equiv 1$, so
$\field{\mathcal M} = -\nabla_\bx(-\Delta_\bx)^{-1}\bigl(\densr{\mathcal M}-1\bigr)=0$. Spatial homogeneity gives
$\bv\cdot\nabla_\bx\mathcal M=0$, and $Q[\mathcal M]=0$ by \ref{A:coll}.
So $t\mapsto\mathcal M$ solves the unassimilated equation
\eqref{eq:VPC}--\eqref{eq:poisson}, and the uniqueness in \ref{A:wp} gives
$f^\dagger(t)=\mathcal M_{1,\bu_1,T_1}$ for all $t\ge0$.
We claim that $t\mapsto\mathcal M_{1,\bu_2,T_2}$ solves the assimilated
equation \eqref{eq:assim}. The transport, field, and collision terms vanish
along this candidate as before. For the feedback term, the identity $f^\dagger(t)=\mathcal M_{1,\bu_1,T_1}
$ and the degeneracy \eqref{eq:deg} give
\[
    \mathcal O[\mathcal M_{1,\bu_2,T_2}]-\mathcal O[f^\dagger(t)]
    =\mathcal O[\mathcal M_{1,\bu_2,T_2}]
     -\mathcal O[\mathcal M_{1,\bu_1,T_1}]
    =0,
    \qquad t\ge0,
\]
so $\mathcal N\bigl(\mathcal M_{1,\bu_2,T_2},0\bigr)=0$ by \ref{A:fb}. Thus
the candidate solves \eqref{eq:assim} with initial datum $f(0)$, and the
uniqueness in \ref{A:wp} gives $f(t)=\mathcal M_{1,\bu_2,T_2}$ for all
$t\ge0$. Substituting the two frozen trajectories into $d(\cdot,\cdot)$
completes the proof.
\end{proof}
\subsection{Proof of Corollary~\ref{cor:moment-obs}}
\label{app:proof:moment-obs}
\begin{proof}[Proof of Corollary~\ref{cor:moment-obs}]
If $(\bu,T)\mapsto \mathcal O[\mathcal M_{1,\bu,T}]$ is not injective, a
degenerate pair as in \ref{A:deg} exists, and Proposition~\ref{prop:obs-necessary} exhibits initial data for which
the assimilation error is a positive constant for all time. Since a homogeneous Maxwellian is uniquely
determined by its momentum and energy (equivalently, velocity and temperature)
moments, failure of the observation map to resolve these moments is equivalent to the map not being injective, which implies the map cannot synchronize from arbitrary Maxwellian initializations.
\end{proof}

\subsection{Proof of Lemma~\ref{lem:Sh-identification}}
\label{app:proof:Sh-identification}
\begin{proof}[Proof of Lemma~\ref{lem:Sh-identification}]
Set
\[
    q_0:=K_h*r_0[f],
    \qquad
    \mathbf q_1:=K_h*\br_1[f],
    \qquad
    q_2:=K_h*r_2[f].
\]
Recalling the definition of $\Phi_M$,
\[
    \Phi_M[f](\bx,\bv)
    =
    \gamma_1 q_0(\bx)
    +\gamma_2\bv\cdot\mathbf q_1(\bx)
    +\frac{\gamma_3}{2}|\bv|^2q_2(\bx),
\]
its gradients are thus given by
\[
    \nabla_{\bv}\Phi_M
    =
    \gamma_2\mathbf q_1+\gamma_3\bv q_2,
\]
and
\[
    \nabla_{\bx}\Phi_M
    =
    \gamma_1\nabla q_0
    +\gamma_2(\nabla\mathbf q_1)^{\top}\bv
    +\frac{\gamma_3}{2}|\bv|^2\nabla q_2.
\]
If $f\in\mathcal S_h$, then all three residuals, and hence
$q_0,\mathbf q_1,q_2$, vanish. Therefore,
$\nabla_{\bx,\bv}\Phi_M=0$. Conversely, suppose that
$\nabla_{\bx,\bv}\Phi_M=0$ almost everywhere with respect to measure
$f(\bx,\bv)\diff\bx\diff\bv$. Since
\[
    \nabla_{\bv}\Phi_M
    =
    \gamma_2\mathbf q_1(\bx)
    +\gamma_3\bv q_2(\bx),
\]
we have
\[
    0
    =
    \int_{\mathbb T^d}
    \left[
    \int_{\mathbb R^d}
    \left|
        \gamma_2\mathbf q_1(\bx)
        +\gamma_3\bv q_2(\bx)
    \right|^2
    f(\bx,\bv)\,\diff\bv
    \right]\diff\bx .
\]
The inner integral is nonnegative and  vanishes for almost
every $\bx\in\mathbb T^d$.  For each such $\bx$, the equality
\begin{equation}\label{eq:equality}
\gamma_2\mathbf q_1(\bx)+\gamma_3\bv q_2(\bx)=0
\end{equation}
holds for almost every $\bv$ with respect to the measure $f(\bx,\bv)\diff\bv$. Since
$\densr{f}(\bx)>0$ for almost every $\bx \in \mathbb T^d$ and $f(\bx,\cdot)$ is a density, which means $f(\bx,\cdot) \geq 0$, we have that $f(\bx,\cdot)\neq 0 $ for almost every $\bv \in \mathbb R^d$, otherwise $\densr{f}(\bx)=0$. So the set $S_\bx := \{\bv: f(\bx,\bv)>0\}$ has positive Lebesgue measure. If we define $G_\bx := \{\bv: \gamma_2\bq_1(\bx)+\gamma_3\bv\bq_2(\bx)=0\}$ and $B_\bx := \{\bv: \gamma_2\bq_1(\bx)+\gamma_3\bv\bq_2(\bx)\neq 0\}$, we have that $S_\bx = (S_\bx \cap G_\bx)\cup (S_\bx \cap B_\bx)$. The set $S_\bx \cap B_\bx$ has Lebesgue measure 0 since 
\[
0 = \int_{B_\bx} f(\bx,\bv) \diff \bv = \int_{B_\bx\cap S_\bx} f(\bx,\bv) \diff \bv + \int_{B_\bx\backslash S_\bx} f(\bx,\bv) \diff \bv = \int_{B_\bx\cap S_\bx} f(\bx,\bv) \diff \bv,
\]
where we used that $f(\bx,\bv) = 0$ on $B_\bx \backslash S_\bx$. Since $S_\bx$ has positive Lebesgue measure and $S_\bx\cap B_\bx$ has zero Lebesgue measure, $S_\bx \cap G_\bx$ must also have positive Lebesgue measure,  which means there are at least 2 distinct velocities $\bv_1$ and $\bv_2$ in $S_\bx \cap G_\bx$. So the equality \eqref{eq:equality} holds
at two distinct velocities $\bv_1$ and $\bv_2$. Subtracting the two
identities gives
\[
    \gamma_3(\bv_1-\bv_2)q_2(\bx)=0,
\]
and thus $q_2(\bx)=0$. Substitution then gives
$\mathbf q_1(\bx)=0$.

The position component now reduces to
$\nabla_{\bx}\Phi_M=\gamma_1\nabla q_0$. Thus
$\nabla q_0=0$ almost everywhere, so $q_0$ is constant on
$\mathbb T^d$. We have that
\[
    \int_{\mathbb T^d}q_0\,\diff\bx
    =
    \int_{\mathbb T^d}r_0\,\diff\bx
    =0,
\]
where the first equality uses $\int_{\mathbb T^d}K_h \diff \bx = 1$ so convolution with $K_h$ preserves the integral, and the second equality uses that both $f,f^\dagger \in \mathcal P_{2,M}$ have total mass $M = |\mathbb T^d|$. Therefore $q_0=0$.

By the consistency of the observations with the true solution,
\[
\begin{aligned}
    r_0
    =K_h*\bigl(\densr{f}-\densr{f^\dagger}\bigr),
    \br_1
    =K_h*\bigl(\momr{f}-\momr{f^\dagger}\bigr),
    r_2
    =K_h*\bigl(\kinr{f}-\kinr{f^\dagger}\bigr).
\end{aligned}
\]
Since $q_0=K_h*r_0=0$ and $K_h$ is even, which implies it is self-adjoint on $L^2(\mathbb T^d)$,
\[
\begin{aligned}
0
&=
\left\langle
\densr{f}-\densr{f^\dagger},q_0
\right\rangle\\
&=
\left\langle
K_h*\bigl(\densr{f}-\densr{f^\dagger}\bigr),r_0
\right\rangle
=
\|r_0\|_{L^2}^2.
\end{aligned}
\]
Thus $r_0=0$, and the same calculation componentwise for the momentum
residual, gives
\[
    \|\br_1\|_{L^2}^2=0,
    \qquad
    \|r_2\|_{L^2}^2=0.
\]
Therefore $f\in\mathcal S_h$.
\end{proof}

\subsection{Proof of Proposition~\ref{prop:pythagoras}}
\label{app:proof:pythagoras}
\begin{proof}[Proof of Proposition~\ref{prop:pythagoras}]
With $\KL(p\|q)=\int(p\log\tfrac pq-p+q)$ from~\eqref{eq:gen-kl}, subtracting gives
\[
\KL(f\|\pi^\obs)-\KL(f\|\mathcal M[f])
=\int \Bigl[f\,\log\tfrac{\mathcal M[f]}{\pi^\obs}+\pi^\obs-\mathcal M[f]\Bigr]\,\diff\bz .
\]
At each $\bx$,
\(
\log(\mathcal M[f]/\pi^\obs)
=\log\tfrac{\densr{f}}{\rho^\obs}-\tfrac d2\log\tfrac{\tempr{f}}{T^\obs}
-\tfrac{|\bv-\velr{f}|^2}{2\tempr{f}}+\tfrac{|\bv-\bu^\obs|^2}{2T^\obs},
\)
so \(\log(\mathcal M[f]/\pi^\obs)\) lies in the span of
$1,\bv,\tfrac12|\bv|^2$. We have that $f$ and $\mathcal M[f]$ have the same moments against these three statistics, i.e. $\rho[\mathcal M[f]] = \rho[f]$, $\bj[\mathcal M[f]] = \bj[f]$, and $\mathcal K [\mathcal M[f]]= \mathcal K[f]$, so $\int f\,\log(\mathcal M[f]/\pi^\obs)
=\int \mathcal M[f]\,\log(\mathcal M[f]/\pi^\obs)$. The right-hand side therefore
equals $\int[\mathcal M[f]\log\tfrac{\mathcal M[f]}{\pi^\obs}-\mathcal M[f]+\pi^\obs]
=\KL(\mathcal M[f]\|\pi^\obs)$, which is~\eqref{eq:pythagoras}.
\end{proof}

\subsection{Proof of Proposition~\ref{prop:C-global}}
\label{app:proof:C-global}
\begin{proof}[Proof of Proposition~\ref{prop:C-global}]
For any test function $\psi(\bv)$, the
weak form of \eqref{eq:C-VP} is
\begin{equation}\label{eq:C-global-weak}
    \frac{\diff}{\diff t}\int \psi\,g_t\diff\bx\diff\bv
    =\int \nabla_\bv\psi\cdot
        \bigl(\field{g_t}+\bb_{C,v}[g_t]\bigr)\,g_t\diff\bx\diff\bv
    +\int \psi\,Q[g_t]\diff\bx\diff\bv,
\end{equation}
where the transport and position-channel terms are $\bx$-divergences and drop
against $\nabla_\bx\psi=0$ on the periodic torus. Boundary terms also vanish on the periodic torus, with the finite second moment of $\bv$ handling the $\bv$-boundary. For
$\psi\in\{1,\,v_i,\,|\bv-\bc|^2\}$ with $\bc\in\mathbb R^d$ fixed, the
collision term \(\int \psi\,Q[g_t]\diff\bx\diff\bv\) also vanishes by \eqref{eq:Q-conservation}.

For the mass, taking $\psi=1$ gives $\dot M=0$. Solvability of
\eqref{eq:poisson} on $\mathbb T^d$ requires
$\int(\densr{g_t}-1)\diff\bx=\int -\Delta_\bx \phi(\bx,t)\diff\bx=0$, since the integral of the Laplacian of a periodic function is 0. Thus, $M=|\mathbb T^d|$.

For the momentum, the self-field exerts no net force on the neutral torus:
with $\field{g}=-\nabla_\bx\phi$ and $\densr{g}-1=-\Delta_\bx\phi$,
\begin{equation}\label{eq:C-no-net-force}
\begin{aligned}
    \int_{\mathbb T^d}\densr{g}\field{g}\diff\bx
    &= \int_{\mathbb T^d}\densr{g}\field{g}- \field{g} +\field{g}\diff\bx\\
    &=\int_{\mathbb T^d}(\densr{g}-1)\field{g}\diff\bx\\
    &=\int_{\mathbb T^d}\Delta_\bx\phi\nabla_\bx\phi\diff\bx\\
    &=-\tfrac12\int_{\mathbb T^d}\nabla_\bx|\nabla_\bx\phi|^2\diff\bx\\
    &=0,
\end{aligned}
\end{equation}
where the second equality uses
$\int_{\mathbb T^d}\field{g}\diff\bx=-\int_{\mathbb T^d}\nabla_\bx\phi
\diff\bx=0$, since the integral of the gradient of a periodic function is 0. Taking $\psi=v_i$ in \eqref{eq:C-global-weak}, the field term
vanishes by \eqref{eq:C-no-net-force}. At $\varepsilon=0$, we have $\Theta^\obs=T^\obs$ and $\regtemp{g}=\temp{g}$;
since $\bu^\obs$ and $T^\obs$ are spatially constant and $\int K_h=1$, the
convolution acts as the identity on the observation part of
\eqref{eq:Ch-drift}, and
\[
    \bb_{C,\bv}[g_t]
    =
    -\frac{\gamma}{T^\obs}(\bv-\bu^\obs)
    +\gamma\Bigl[\bv\,K_h*\bigl(\temp{g_t}^{-1}\bigr)
    -K_h*\bigl(\vel{g_t}/\temp{g_t}\bigr)\Bigr].
\]
For the first term, we have that
\begin{equation*}
\begin{aligned}
\int_{\mathbb T^d \times\mathbb R^d} -\frac{\gamma}{T^\obs}(\bv-\bu^\obs) g_t\diff \bx \diff \bv &= -\frac{\gamma}{T^\obs}\left(\int_{\mathbb T^d \times \mathbb R^d} \bv g_t \diff \bx \diff \bv -\bu^\obs \int_{\mathbb T^d \times \mathbb R^d} g_t \diff \bx \diff \bv\right)\\
&=-\frac{\gamma}{T^\obs}\left(\int_{\mathbb T^d} \bj[g_t]\diff \bx - \bu^\obs M\right)\\
&= -\frac{\gamma M}{T^\obs} \left(\overline \bu [g_t] -\bu^\obs\right),
\end{aligned}
\end{equation*}
where we used \eqref{eq:C-global-moments}.
For the second term, we have
\begin{equation*}
\begin{aligned}
&\int_{\mathbb T^d \times \mathbb R^d}\Bigl[\bv\,K_h*\bigl(\temp{g_t}^{-1}\bigr)
    -K_h*\bigl(\vel{g_t}/\temp{g_t}\bigr)\Bigr] g_t\diff\bx\diff\bv \\ = &\int_{\mathbb T^d} K_h*\bigl(\temp{g_t}^{-1}\bigr)\int_{\mathbb R^d} \bv g_t \diff \bv \diff \bx - \int_{\mathbb T^d}K_h*\bigl(\vel{g_t}/\temp{g_t}\bigr) \int_{\mathbb R^d} g_t \diff \bv \diff \bx\\
    =& \int_{\mathbb T^d} \bj[g_t] K_h*\bigl(\temp{g_t}^{-1}\bigr) \diff \bx - \int_{\mathbb T^d}\rho[g_t]K_h*\bigl(\vel{g_t}/\temp{g_t}\bigr) \diff \bx\\
    =& \int_{\mathbb T^d}  \temp{g_t}^{-1} \left(K_h*\bj[g_t]\right) \diff \bx - \int_{\mathbb T^d}\bigl(\vel{g_t}/\temp{g_t}\bigr) \left(K_h*\rho[g_t]\right)\diff \bx\\
    =&\int_{\mathbb T^d}
    \frac{\mom{g_t}-\dens{g_t}\,\vel{g_t}}{\temp{g_t}}\diff\bx\\
    =&0,
\end{aligned}
\end{equation*}
where we used that $K_h$ is even and hence self-adjoint on $L^2(\mathbb T^d)$,
$K_h*\densr{g_t}=\dens{g_t}$ and $K_h*\momr{g_t}=\mom{g_t}$
by \eqref{eq:smoothed-hydrodynamic-moments}, and $\dens{g_t}\vel{g_t}=\mom{g_t}$
by \eqref{eq:smoothed-derived-fields}. Hence,
for $\psi = v_i$, \eqref{eq:C-global-weak} becomes:
\begin{equation*}
\begin{aligned}
\frac{\diff }{\diff t}\int_{\mathbb T^d \times \mathbb R^d} v_i g_t \diff \bx \diff \bv = -\frac{\gamma M}{T^\obs} \left(\overline \bu [g_t] -\bu^\obs\right).   
\end{aligned}
\end{equation*}
Since $M$ is
constant, \eqref{eq:C-global-u} follows from \eqref{eq:C-global-moments}.

Lastly, for temperature, we take $\psi(\bv)=|\bv-\bc|^2$ and then set
$\bc=\overline{\bu}[g_t]$. The motion of the center does not contribute,
because
\[
    \frac{\diff}{\diff t}\bigl(dM\,\overline{T}[g_t]\bigr)
    =\Bigl[\frac{\diff}{\diff t}\int|\bv-\bc|^2 g_t\diff\bx\diff\bv
     \Bigr]_{\bc=\overline{\bu}[g_t]}
    -2\,\frac{\diff\overline{\bu}[g_t]}{\diff t}\cdot
     \int\bigl(\bv-\overline{\bu}[g_t]\bigr)g_t\diff\bx\diff\bv ,
\]
and the last integral equals $M\overline{\bu}[g_t]-M\overline{\bu}[g_t]=0$.
With $\nabla_\bv\psi=2(\bv-\overline{\bu}[g_t])$, the weak form
\eqref{eq:C-global-weak} produces three contributions. The field term is
the electric work,
\begin{align*}
    2\int\bigl(\bv-\overline{\bu}[g_t]\bigr)\cdot\field{g_t}\,
      g_t\diff\bx\diff\bv
    &=2\int_{\mathbb T^d}\field{g_t}\cdot\momr{g_t}\diff\bx
     -2\,\overline{\bu}[g_t]\cdot
      \int_{\mathbb T^d}\densr{g_t}\,\field{g_t}\diff\bx\\
    &=2\int_{\mathbb T^d}\field{g_t}\cdot\momr{g_t}\diff\bx
\end{align*}
by \eqref{eq:C-no-net-force}. The observation part of the feedback gives:
\begin{equation*}
\begin{aligned}
&-\frac{2\gamma}{T^\obs}
    \int_{\mathbb T^d\times \mathbb R^d}\bigl(\bv-\overline{\bu}[g_t]\bigr)\cdot(\bv-\bu^\obs)\,
      g_t\diff\bx\diff\bv \\
      =& -\frac{2\gamma}{T^\obs}
    \int_{\mathbb T^d\times \mathbb R^d}\bigl(\bv-\overline{\bu}[g_t]\bigr)\cdot(\bv-\overline{\bu}[g_t]+\overline{\bu}[g_t]-\bu^\obs)\,
      g_t\diff\bx\diff\bv\\
      =& -\frac{2\gamma}{T^\obs}\int_{\mathbb T^d \times \mathbb R^d} |\bv-\overline \bu[g_t]|^2 g_t \diff \bx \diff \bv + (\overline{\bu}[g_t]-\bu^\obs) \int_{\mathbb T^d \times \mathbb R^d} (\bv-\overline \bu[g_t])g_t \diff \bx \diff \bv\\
      =& -\frac{2\gamma}{T^\obs}\int_{\mathbb T^d \times \mathbb R^d} |\bv-\overline \bu[g_t]|^2 g_t \diff \bx \diff \bv\\
      =& -\frac{2\gamma}{T^\obs}dM\overline T[g_t],
\end{aligned}
\end{equation*}
where we used \eqref{eq:C-global-moments} and $\int(\bv-\overline{\bu}[g_t])\,g_t\diff\bx\diff\bv=0$.

For the model part, the velocity integrals give, pointwise in $\bx$,
\[
    \int_{\mathbb R^d}\bigl(\bv-\overline{\bu}[g_t]\bigr)\cdot\bv\,
    g_t\diff\bv
    =2\kinr{g_t}-\overline{\bu}[g_t]\cdot\momr{g_t},
    \qquad
    \int_{\mathbb R^d}\bigl(\bv-\overline{\bu}[g_t]\bigr)g_t\diff\bv
    =\momr{g_t}-\densr{g_t}\,\overline{\bu}[g_t],
\]
and the self-adjointness of $K_h*$ turns the raw moments into their
smoothed counterparts, so the model part contributes
\begin{equation*}
\begin{aligned}
&\int_{\mathbb T^d \times \mathbb R^d}2\gamma(\bv-\overline \bu[g_t])\cdot\Bigl[\bv\,K_h*\bigl(\temp{g_t}^{-1}\bigr)
    -K_h*\bigl(\vel{g_t}/\temp{g_t}\bigr)\Bigr] g_t\diff\bx\diff\bv\\
    =&2\gamma\int_{\mathbb T^d}
    \Bigl[
    \frac{2\kin{g_t}-\overline{\bu}[g_t]\cdot\mom{g_t}}{\temp{g_t}}
    -\frac{\bigl(\mom{g_t}-\dens{g_t}\,\overline{\bu}[g_t]\bigr)
      \cdot\vel{g_t}}{\temp{g_t}}
    \Bigr]\diff\bx\\
    =&2\gamma\int_{\mathbb T^d}
    \frac{2\kin{g_t}-\dens{g_t}|\vel{g_t}|^2}{\temp{g_t}}\diff\bx\\
    =& 2\gamma\,d\int_{\mathbb T^d}\dens{g_t}\diff\bx\\
    =&2\gamma\,dM,
\end{aligned}
\end{equation*}
where the cross terms cancel because $\dens{g_t}\vel{g_t}=\mom{g_t}$,
the third equality uses
$d\,\dens{g_t}\temp{g_t}=2\kin{g_t}-\dens{g_t}|\vel{g_t}|^2$
from \eqref{eq:smoothed-derived-fields}, and the last uses
$\int\dens{g_t}\diff\bx=\int\densr{g_t}\diff\bx=M$ by the unit mass of
$K_h$.
Summing the three contributions,
\[
    \frac{\diff}{\diff t}\bigl(dM\,\overline{T}[g_t]\bigr)
    =-\frac{2\gamma\,dM}{T^\obs}\bigl(\overline{T}[g_t]-T^\obs\bigr)
    +2\int_{\mathbb T^d}\field{g_t}\cdot\momr{g_t}\diff\bx ,
\]
which is \eqref{eq:C-global-T} after dividing by the constant $dM$.
\end{proof}

\section{Proofs for Section~\ref{sec:wp-finite-N}}
\label{app:sec:wp-finite-N}
Throughout, $L$ is the coefficient-bound constant of Lemma~\ref{lem:reg-bounds} on
$\mathcal P_{R_4}=\{\mu\in\mathcal P_4:\int|\bv|^4\diff\mu\le R_4\}$ for a
finite radius $R_4$. It depends on $R_4$, $\varepsilon$, and the fixed
data of Assumption~\ref{ass:wp-reg}, never on $t$. Write $\sigma:=\sqrt{2\nu\Theta_{\mathrm{LB}}}$,
recall the affine--isotropic velocity drift
$\bb^a_\bv(t,\bz,\mu)=\alpha^a[\mu](t,\bx)\bv+\beta^a[\mu](t,\bx)$ with the slope upper
bound $\alpha^a[\mu]\le\bar\alpha_a$ of \eqref{eq:gain-diss}. No relation
between $\bar\alpha_a$ and $\nu$ is assumed. The inhomogeneous coefficients are bounded by
$\|\beta^a[\mu]\|_\infty+\|\field{\mu}\|_\infty\le\bar c<\infty$, uniformly on
$\mathcal P_{R_4}$ and in $t$.
\subsection{proof of Lemma \ref{lem:field}}
\label{subapp:lem:field}
\begin{proof}
Since $\int_{\mathbb T^d}S_{\Delta x}*B_0\mu=1=\int_{\mathbb T^d}1$, the source
$S_{\Delta x}*B_0\mu-1$ has zero mean, and convolution associativity gives
$\field{\mu}=K_E*B_0\mu$ with $K_E=-\nabla_\bx(-\Delta_\bx)^{-1}S_{\Delta x}$, a fixed kernel
independent of $\mu$. To see $K_E\in C^1_b$, commute derivatives onto $S_{\Delta x}$: $K_E=-\nabla_\bx(-\Delta_\bx)^{-1}S_{\Delta x} = -(-\Delta_\bx)^{-1}\nabla_\bx S_{\Delta x}$, which is bounded because $\nabla_\bx S_{\Delta x} \in L^\infty$ since $S_{\Delta x}\in W^{2,\infty}_\bx$ and Green's function $(-\Delta_\bx)^{-1}$ on the torus is integrable. Also, $\nabla_\bx K_E=-\nabla_\bx^2(-\Delta_\bx)^{-1}S_{\Delta x}=-(-\Delta_\bx)^{-1}\nabla_\bx^2S_{\Delta x}$,
which is bounded because $S_{\Delta x}\in W^{2,\infty}_\bx$ implies $\nabla_\bx^2S_{\Delta x}\in L^\infty$
and the torus Green's function is integrable.
The field bound is then
$\|\field{\mu}\|_{W^{1,\infty}}=\|K_E*B_0\mu\|_{W^{1,\infty}}\le\|K_E\|_{W^{1,\infty}}$,
since $B_0\mu$ is a probability measure and convolution with it does not increase the
sup norm.

For the Lipschitz bound, fix $\bx\in\mathbb T^d$ and let $\pi$ be an optimal
$W_2$-coupling of $\mu$ and $\mu'$, with marginals $(\by,\bw)\sim\mu$ and
$(\by',\bw')\sim\mu'$. Taking $B_0\mu,B_0\mu'$ to be the position marginals of $\mu,\mu'$,
\[\begin{aligned}
    \field{\mu}(\bx)-\field{\mu'}(\bx)
  =&\int K_E(\bx-\by)\,B_0\mu(\diff\by)-\int K_E(\bx-\by')\,B_0\mu'(\diff\by')\\
  =&\int\bigl[K_E(\bx-\by)-K_E(\bx-\by')\bigr]\diff\pi .
\end{aligned}
\]
Since $K_E\in C^1_b$, the mean value theorem gives
$|K_E(\bx-\by)-K_E(\bx-\by')|\le\|\nabla K_E\|_\infty\,|\by-\by'|$, and Cauchy--Schwarz
on the probability measure $\pi$ yields
\[\begin{aligned}
  |\field{\mu}(\bx)-\field{\mu'}(\bx)|
  \le&\|\nabla K_E\|_\infty\!\int|\by-\by'|\diff\pi\\
  \le&\|\nabla K_E\|_\infty\Bigl(\!\int|\by-\by'|^2\diff\pi\Bigr)^{1/2}\\
  \le&\|\nabla K_E\|_\infty\,W_2(\mu,\mu') ,
  \end{aligned}
\]
the last inequality because
$\int|\by-\by'|^2\diff\pi\le\int\bigl(|\by-\by'|^2+|\bw-\bw'|^2\bigr)\diff\pi
=W_2(\mu,\mu')^2$, $\pi$ being $W_2$-optimal for $\mu,\mu'$. Taking the supremum over
$\bx$ gives
$\|\field{\mu}-\field{\mu'}\|_{L^\infty_\bx}\le\|\nabla K_E\|_\infty\,W_2(\mu,\mu')$.
\end{proof}

\subsection{proof of  Lemma \ref{lem:reg-bounds}}
\label{app:sub:reg-bounds}
\begin{proof}
We claim that for every
$\mu,\mu'\in\mathcal P_{R_4}$ and $i\in\{0,1,2\}$,
\begin{equation}
  \|K_h*B_i\mu\|_{W^{2,\infty}_\bx}\le L,
  \qquad
  \|K_h*B_i\mu-K_h*B_i\mu'\|_{W^{1,\infty}_\bx}\le L\,W_2(\mu,\mu'),
  \label{eq:pf-mom}
\end{equation}
where $B_0 \mu = \int_{\mathbb R^d} \mu \diff \bv, B_1\mu = \int_{\mathbb R^d} \bv \mu\diff \bv, B_2\mu=\int_{\mathbb R^d} \tfrac{1}{2}|\bv|^2\mu \diff\bv$. 
For the value bound, derivatives move to the kernel,
$\nabla_\bx^k(K_h*B_i\mu)=(\nabla_\bx^kK_h)*B_i\mu$, and by Jensen's inequality on the
probability measure $\mu$, $\left(\int|\bv|\,\diff\mu\right)^4\le \int|\bv|^4 \diff \mu \le R_4$ and $\left(\int|\bv|^2\,\diff\mu\right)^2\le \int|\bv|^4 \diff \mu \le R_4$ so
$\int|\bv|\,\diff\mu\le R_4^{1/4}$ and $\int|\bv|^2\,\diff\mu\le R_4^{1/2}$, so that
$\int|B_i|\,\diff\mu\le1+R_4^{1/4}+\frac12R_4^{1/2}$ and
$|\nabla_\bx^k(K_h*B_i\mu)(\bx)|\le\|\nabla_\bx^kK_h\|_\infty(1+R_4^{1/4}+\frac12R_4^{1/2})$ for $k\le2$,
giving the first bound with $L=\|K_h\|_{W^{2,\infty}}(1+R_4^{1/4}+\frac12R_4^{1/2})$.

For the Lipschitz bound, let $\pi$ be an optimal $W_2$-coupling of $\mu,\mu'$, with
marginals in $(\by,\bw)$ and $(\by',\bw')$. Take $B_i\mu,B_i\mu'$ to be the position marginals of $\mu,\mu'$ again. The binding case is $i=2$. Write, for
$k\in\{0,1\}$ and any $\bx$,
\[
\begin{aligned}
   &\nabla_\bx^k\kin{\mu}(\bx)-\nabla_\bx^k\kin{\mu'}(\bx)
   =\int\Bigl[\nabla_\bx^kK_h(\bx-\by)\frac12|\bw|^2
            -\nabla_\bx^kK_h(\bx-\by')\frac12|\bw'|^2\Bigr]\diff\pi\\
            =& \int\bigl[\nabla_\bx^kK_h(\bx-\by)-\nabla_\bx^kK_h(\bx-\by')\bigr]\frac12|\bw|^2
+\nabla_\bx^kK_h(\bx-\by')\frac12\bigl(|\bw|^2-|\bw'|^2\bigr)\diff\pi .
\end{aligned}
\]
By the mean value theorem and Cauchy--Schwarz, the first term is bounded by
\[
  \frac12\|\nabla_\bx^{k+1}K_h\|_\infty
  \Bigl(\!\int|\by-\by'|^2\diff\pi\Bigr)^{1/2}\Bigl(\!\int|\bw|^4\diff\pi\Bigr)^{1/2}
  \le\frac12\|\nabla_\bx^{k+1}K_h\|_\infty\,R_4^{1/2}\,W_2(\mu,\mu').
\] With $|\bw|^2-|\bw'|^2=(\bw-\bw')\!\cdot\!(\bw+\bw')$, Cauchy-Schwarz, and
$\bigl(\int|\bw+\bw'|^2\diff\pi\bigr)^{1/2}\le2R_4^{1/4}$, the second term is bounded by
\[
\begin{aligned}
\frac12 \|\nabla_\bx^{k}K_h\|_\infty\int |\bw - \bw'||\bw+\bw'|\diff \pi&\le \frac12 \|\nabla_\bx^{k}K_h\|_\infty\left(\int |\bw-\bw'|^2\diff\pi\right)^{1/2}\left(\int |\bw+\bw'|^2\diff\pi\right)^{1/2}\\
&\le \|\nabla_\bx^{k}K_h\|_\infty R_4^{1/4}W_2(\mu,\mu')
\end{aligned}
\]
Summing over $k=0,1$ gives
\eqref{eq:pf-mom} for $i=2$. For $i = 0$, we have for $k\in\{0,1\}$ and any $\bx$,
\[
\nabla^k_\bx \mathcal K_h[\mu] (\bx) - \nabla^k_\bx\mathcal K_h[\mu'](\bx) = \int \left[\nabla^k_\bx K_h(\bx-\by)-\nabla^k_\bx K_h(\bx-\by')\right]\diff \pi,
\]
which is bounded by
\[
\|\nabla^{k+1}_\bx K_h\|_\infty \int |\by-\by'|^2 \diff \pi \le \|\nabla^{k+1}_\bx K_h\|_\infty W_2(\mu,\mu'),
\]
where we used mean value theorem and Cauchy-Schwarz. Summing over $k=0,1$ gives \eqref{eq:pf-mom} for $i=0$. For $i=1$, $k\in\{0,1\}$, and any $\bx$,
\[
\begin{aligned}
&\nabla^k_\bx \mathcal K_h[\mu] (\bx) - \nabla^k_\bx\mathcal K_h[\mu'](\bx) = \int \left[\nabla^k_\bx K_h(\bx-\by)\bw - \nabla^k_\bx K_h(\bx-\by') \bw' \right]\diff \pi\\
=& \int \left[\nabla^k_\bx K_h(\bx - \by) - \nabla^k_\bx K_h(\bx-\by')\right]\bw + \nabla^k_\bx K_h(\bx-\by')(\bw-\bw')\diff \pi
\end{aligned}
\]
Using mean value theorem and Cauchy-Schwarz, this is bounded by
\[
\begin{aligned}
&\|\nabla^{k+1}_\bx K_h \|_\infty\left(\int |\by-\by'|^2\diff \pi\right)^{1/2}\left(\int |\bw|^2 \diff \pi\right)^{1/2} + \|\nabla^k_\bx K_h\|_\infty\left(\int|\bw - \bw'|^2 \diff \pi\right)^{1/2}\\\le& \left(\|\nabla^{k+1}_\bx K_h\|_\infty R_4^{1/4} + \|\nabla^k_\bx K_h\|_\infty\right)W_2(\mu,\mu')
\end{aligned}
\]
Summing over $k=0,1$ gives \eqref{eq:pf-mom} for $i=1$.

For the bulk velocity $\vel{\mu}$ and the temperature $\temp{\mu}$, we claim that
\begin{equation}
\begin{gathered}
  \|\vel{\mu}\|_{W^{2,\infty}_\bx}+\|\temp{\mu}\|_{W^{2,\infty}_\bx}\le \tilde L,\\
  \|\vel{\mu}-\vel{\mu'}\|_{W^{1,\infty}_\bx}+\|\temp{\mu}-\temp{\mu'}\|_{W^{1,\infty}_\bx}\le \tilde L\,W_2(\mu,\mu').
\end{gathered}
  \label{eq:pf-quot}
\end{equation}
For the bulk velocity, $\bu_h[\mu]=\bj_h[\mu]/\rho_h[\mu]$ is bounded by $L/\kappa_h$,
$\nabla_\bx\vel{\mu}=\nabla_\bx\mom{\mu}/\dens{\mu}-\mom{\mu}\otimes\nabla_\bx\dens{\mu}/\dens{\mu}^2$
is bounded by $L/\kappa_h+L^2/\kappa_h^2$, and the second $\bx$-derivative
\[
\nabla_\bx^2 \bu_h[\mu] = \frac{\nabla^2_\bx \bj_h[\mu]}{\rho_h[\mu]}-2\frac{\nabla_\bx\bj_h[\mu]\otimes\nabla_\bx\rho_h[\mu]}{\rho_h[\mu]^2} -\frac{\bj_h[\mu]\otimes\nabla^2_\bx \rho_h[\mu]}{\rho_h[\mu]^2}+2\frac{\bj_h[\mu]\otimes\nabla_\bx\rho_h[\mu]^{\otimes2}}{\rho_h[\mu]^3}
\]
is bounded by $L/\kappa_h + 3L^2/\kappa_h^2 + 2L^3/\kappa_h^3$, where we used \eqref{eq:Kh-floor} and
\eqref{eq:pf-mom}. \[
\vel{\mu}-\vel{\mu'}=\frac{\mom{\mu}-\mom{\mu'}}{\dens{\mu}}
-\frac{\mom{\mu'}(\dens{\mu}-\dens{\mu'})}{\dens{\mu}\dens{\mu'}}
\]
is bounded in $L^\infty_\bx$ by $L\,W_2(\mu,\mu')/\kappa_h+L^2W_2(\mu,\mu')/\kappa_h^2$ using \eqref{eq:Kh-floor} and
\eqref{eq:pf-mom} and 
\[
\begin{aligned}
&\nabla_\bx\left(\vel{\mu}-\vel{\mu'}\right) \\= &\frac{\nabla_\bx\bj_h[\mu]}{\rho_h[\mu]} - \frac{\nabla_\bx \bj_h[\mu']}{\rho_h[\mu']}-\frac{\bj_h[\mu]\otimes\nabla_\bx \rho_h[\mu]}{\rho_h[\mu]^2}+\frac{\bj_h[\mu']\otimes\nabla_\bx \rho_h[\mu']}{\rho_h[\mu']^2}\\
=& \frac{\nabla_\bx \bj_h[\mu]-\nabla_\bx \bj_h[\mu']}{\rho_h[\mu]}- \frac{\nabla_\bx \bj_h[\mu'](\rho_h[\mu]-\rho_h[\mu'])}{\rho_h[\mu]\rho_h[\mu']} - \\&\left(\frac{(\bj_h[\mu]-\bj_h[\mu'])\otimes\nabla_\bx\rho_h[\mu]}{\rho_h[\mu]^2} + \frac{\bj_h[\mu']\otimes(\nabla_\bx\rho_h[\mu]-\nabla_\bx\rho_h[\mu'])}{\rho_h[\mu]^2} - \right.\\&\left.\frac{\bj_h[\mu']\otimes\nabla_\bx \rho_h[\mu'](\rho_h[\mu]^2-\rho_h[\mu']^2)}{\rho_h[\mu^2]\rho_h[\mu']^2}\right)
\end{aligned}
\]
is bounded by $LW_2(\mu,\mu')/\kappa_h+ 3L^2W_2(\mu,\mu')/\kappa_h^2+2L^4W_2(\mu,\mu')/\kappa_h^4$, where we used \eqref{eq:Kh-floor} and
\eqref{eq:pf-mom} and $\rho_h[\mu]^2-\rho_h[\mu']^2=(\rho_h[\mu]-\rho_h[\mu'])(\rho_h[\mu]+\rho_h[\mu'])$. Hence, we have bounds for $\|\bu_h[\mu]\|_{W^{2,\infty}_\bx}$ and $\|\bu_h[\mu]-\bu_h[\mu']\|_{W^{1,\infty}_\bx}$ of the form in \eqref{eq:pf-quot}.

The temperature
is controlled identically once it is split as
$\temp{\mu}=\frac2d\,\kin{\mu}/\dens{\mu}-\frac1d|\vel{\mu}|^2$. The quotient
$\kin{\mu}/\dens{\mu}$ obeys the same estimate as $\vel{\mu}$, now using
$\|\kin{\mu}\|_{W^{2,\infty}_\bx}\le L$ from \eqref{eq:pf-mom} with $i=2$ together
with $\dens{\mu}\ge\kappa_h$, while the algebra property of $W^{2,\infty}_\bx$ gives
\[\||\vel{\mu}|^2\|_{W^{2,\infty}_\bx}\le C_1\|\vel\mu\|_{W^{2,\infty}_\bx}^2=C_1\left(\frac{3L}{\kappa_h}+\frac{4L^2}{\kappa_h^2}+\frac{2L^3}{\kappa_h^3}\right)^2,\] 
where we summed the $L^\infty_\bx$ bounds for $\bu_h[\mu]$, $\nabla_\bx\bu_h[\mu]$, and $\nabla^2_\bx\bu_h[\mu]$ derived earlier. Via
$|\vel{\mu}|^2-|\vel{\mu'}|^2=(\vel{\mu}-\vel{\mu'})\!\cdot\!(\vel{\mu}+\vel{\mu'})$ and the algebra property of $W^{1,\infty}_\bx$,
\[
\begin{aligned}
\||\vel{\mu}|^2-|\vel{\mu'}|^2\|_{W^{1,\infty}_\bx}&\le C_2 \|\vel{\mu}-\vel{\mu'}\|_{W^{1,\infty}_\bx}\|\vel{\mu}+\vel{\mu'}\|_{W^{1,\infty}_\bx}\\
&\le C_2 \left(\frac{2L}{\kappa_h}+\frac{4L^2}{\kappa_h^2}+\frac{2L^4}{\kappa_h^4}\right)\left(\frac{4L}{\kappa_h}+\frac{2L^2}{\kappa_h^2}\right)W_2(\mu,\mu')
\end{aligned}
\]
which is obtained from summing the $L^\infty_\bx$ bounds involving $\bu_h$. Summing the $\mathcal K_h[\mu]/\rho_h[\mu]$ and $|\bu_h[\mu]|^2$ terms yields bounds for $\temp{\mu}$ in \eqref{eq:pf-quot}.

For Method~C, the regularized temperature
$\regtemp{\mu}=\temp{\mu}+\varepsilon$ of \S\ref{sec:formC} satisfies
$\regtemp{\mu}\ge\varepsilon$ with no temperature lower bound assumed, so the
$\|\temp{\mu}\|_{W^{2,\infty}_\bx}\le \tilde L$ and $\|\temp{\mu}-\temp{\mu'}\|_{W^{1,\infty}_\bx}\le \tilde L W_2(\mu,\mu')$ bounds just proved and the smooth map $t\mapsto t^{-1}$ on $[\varepsilon,\infty)$
(derivative bounded by $\varepsilon^{-2}$) give
\begin{equation}
\begin{aligned}
  \|(\regtemp{\mu})^{-1}\|_{W^{2,\infty}_\bx}\le&\|(\regtemp{\mu})^{-1}\|_{L^\infty_\bx}+\|\nabla_\bx \Theta[\mu](\regtemp{\mu})^{-2}]\|_{L^\infty_\bx}+\|\nabla_\bx^2\Theta[\mu](\regtemp{\mu})^{-2}\|_{L^\infty_\bx}+\\&\|2(\nabla_\bx\Theta[\mu])^2(\regtemp{\mu})^{-3}\|_{L^\infty_\bx}\\\le& \frac{1}{\varepsilon} + \frac{2\tilde L}{\varepsilon^2}+\frac{2\tilde L^2}{\varepsilon^3}.
\end{aligned}
  \label{eq:pf-Tinv-1}
\end{equation}
Additionally, we have that $(\Theta[\mu])^{-1}-(\Theta[\mu'])^{-1}$ is bounded in $L^\infty_\bx$ by $\tilde{L}W_2(\mu,\mu')/\varepsilon^2$ using mean value theorem, and 
\begin{equation*}
\begin{aligned}
\nabla_\bx((\Theta[\mu])^{-1}-(\Theta[\mu'])^{-1}) &= -\frac{\nabla_\bx\Theta[\mu]}{(\Theta[\mu])^2}+\frac{\nabla_\bx\Theta[\mu']}{(\Theta[\mu'])^2}\\
&=-\frac{\nabla_\bx \Theta[\mu]-\nabla_\bx\Theta[\mu']}{(\Theta[\mu])^2} -\nabla_\bx\Theta[\mu']\left(\frac{1}{(\Theta[\mu])^2}-\frac{1}{(\Theta[\mu'])^2}\right),
\end{aligned}
\end{equation*}
which is bounded by $\tilde L W_2(\mu,\mu')/\varepsilon^2+2\tilde L^2 (\tilde L+\varepsilon)W_2(\mu,\mu')/\varepsilon^4$, where we used $\tfrac{1}{(\Theta[\mu])^2}-\tfrac{1}{(\Theta[\mu'])^2} = \tfrac{(\Theta[\mu'])^2-(\Theta[\mu])^2}{(\Theta[\mu])^2(\Theta[\mu'])^2}=\tfrac{(\Theta[\mu']-\Theta[\mu])(\Theta[\mu']+\Theta[\mu])}{(\Theta[\mu])^2(\Theta[\mu'])^2}=\tfrac{(T_h[\mu']-T_h[\mu])(T_h[\mu']+T_h[\mu]+2\varepsilon)}{(\Theta[\mu])^2(\Theta[\mu'])^2}$. Hence, we have
\begin{equation}
\|(\regtemp{\mu})^{-1}-(\regtemp{\mu'})^{-1}\|_{W^{1,\infty}_\bx}\le \frac{2\tilde LW_2(\mu,\mu')}{\varepsilon^2}+\frac{2\tilde L^2(\tilde L+\varepsilon)W_2(\mu,\mu')}{\varepsilon^4}.
\label{eq:pf-Tinv-2}
\end{equation}

For Methods~A and~B, the velocity drift read off from \eqref{eq:split-v} is
$\bb^a_\bv=\alpha^a[\mu]\bv+\beta^a[\mu]$ with
\[
  \alpha^A=\alpha^B=-\gamma_3\,K_h*(\kin{\mu}-\mathcal K^\obs),\quad
  \beta^A=\beta^B=-\gamma_2\,K_h*(\mom{\mu}-\bj^\obs).
\]
From here on, we use $L$ as a generic constant depending on $R_4, \varepsilon$, and the fixed data in Assumption \ref{ass:wp-reg}, absorbing the explicit bounds derived above. Since the observed fields $\rho^\obs, \bu^\obs,T^\obs$ lie in $W^{2,\infty}_\bx$ uniformly in time, \eqref{eq:smoothed-derived-fields} implies that $\mathcal K^\obs,\bj^\obs\in W^{2,\infty}_\bx$ uniformly in time as well. Using this and \eqref{eq:pf-mom}, we have
\[
\begin{aligned}
  \|\alpha^A\|_{W^{1,\infty}_\bx}
  &=\gamma_3\|K_h*(\kin{\mu}-\mathcal K^\obs)\|_{W^{1,\infty}_\bx}
  \le\gamma_3\|\kin{\mu}-\mathcal K^\obs\|_{W^{1,\infty}_\bx}\\
  &\le\gamma_3\bigl(\|\kin{\mu}\|_{W^{2,\infty}_\bx}+\|\mathcal K^\obs\|_{W^{2,\infty}_\bx}\bigr)\le L,\\
  \|\beta^A\|_{W^{1,\infty}_\bx}
  &=\gamma_2\|K_h*(\mom{\mu}-\bj^\obs)\|_{W^{1,\infty}_\bx}
  \le\gamma_2\|\mom{\mu}-\bj^\obs\|_{W^{1,\infty}_\bx}\\
  &\le\gamma_2\bigl(\|\mom{\mu}\|_{W^{2,\infty}_\bx}+\|\bj^\obs\|_{W^{2,\infty}_\bx}\bigr)\le L,
\end{aligned}
\]
while the observed term cancels in each difference, leaving
\[
\begin{aligned}
  \|\alpha^A[\mu]-\alpha^A[\mu']\|_{L^\infty_\bx}
  &=\gamma_3\|K_h*(\kin{\mu}-\kin{\mu'})\|_{L^\infty_\bx}
  \le\gamma_3\|\kin{\mu}-\kin{\mu'}\|_{L^\infty_\bx}\\
  &\le\gamma_3\|\kin{\mu}-\kin{\mu'}\|_{W^{1,\infty}_\bx}\le L\,W_2(\mu,\mu'),\\
  \|\beta^A[\mu]-\beta^A[\mu']\|_{L^\infty_\bx}
  &=\gamma_2\|K_h*(\mom{\mu}-\mom{\mu'})\|_{L^\infty_\bx}
  \le\gamma_2\|\mom{\mu}-\mom{\mu'}\|_{L^\infty_\bx}\\
  &\le\gamma_2\|\mom{\mu}-\mom{\mu'}\|_{W^{1,\infty}_\bx}\le L\,W_2(\mu,\mu').
\end{aligned}
\] For Method~C, \eqref{eq:Ch-drift} with the $\varepsilon$-shift,
\[
  \alpha^C=-\gamma K_h*\Bigl(\frac{1}{\Theta^{\obs}}-\frac{1}{\regtemp{\mu}}\Bigr),
  \quad
  \beta^C=\gamma K_h*\Bigl(\frac{\bu^\obs}{\Theta^{\obs}}-\frac{\vel{\mu}}{\regtemp{\mu}}\Bigr).
\]
Since $\Theta^{\obs}\ge\varepsilon$ and $\bu^\obs,T^\obs\in W^{2,\infty}_\bx$,
the observed reciprocal $(\Theta^{\obs})^{-1}$ and product
$\bu^\obs(\Theta^{\obs})^{-1}$ are bounded in $W^{2,\infty}_\bx$ by $L$, and we use the algebra property of $W^{1,\infty}_\bx$ to get:
\[
\begin{aligned}
  \|\alpha^C\|_{W^{1,\infty}_\bx}
  &\le\gamma\bigl\|(\regtemp{\mu})^{-1}-(\Theta^{\obs})^{-1}\bigr\|_{W^{1,\infty}_\bx}\\
  &\le\gamma\bigl(\|(\regtemp{\mu})^{-1}\|_{W^{2,\infty}_\bx}+\|(\Theta^{\obs})^{-1}\|_{W^{2,\infty}_\bx}\bigr)\le L,\\
  \|\beta^C\|_{W^{1,\infty}_\bx}
  &\le\gamma\bigl\|\vel{\mu}(\regtemp{\mu})^{-1}-\bu^\obs(\Theta^{\obs})^{-1}\bigr\|_{W^{1,\infty}_\bx}\\
  &\le\gamma C\bigl(\|\vel{\mu}\|_{W^{2,\infty}_\bx}\|(\regtemp{\mu})^{-1}\|_{W^{2,\infty}_\bx}
       +\|\bu^\obs\|_{W^{2,\infty}_\bx}\|(\Theta^{\obs})^{-1}\|_{W^{2,\infty}_\bx}\bigr)\le L,
\end{aligned}
\]
while the observed term cancels in each difference, so
\[
\begin{aligned}
&  \|\alpha^C[\mu]-\alpha^C[\mu']\|_{L^\infty_\bx}
  \le \gamma\bigl\|(\regtemp{\mu})^{-1}-(\regtemp{\mu'})^{-1}\bigr\|_{L^\infty_\bx}\\
  \le&\gamma\bigl\|(\regtemp{\mu})^{-1}-(\regtemp{\mu'})^{-1}\bigr\|_{W^{1,\infty}_\bx}
  \le L\,W_2(\mu,\mu'),\\
  &\|\beta^C[\mu]-\beta^C[\mu']\|_{L^\infty_\bx}\le \gamma\bigl\|\vel{\mu}(\regtemp{\mu})^{-1}-\vel{\mu'}(\regtemp{\mu'})^{-1}\bigr\|_{L^\infty_\bx}\\
  \le&\gamma\|\vel{\mu}-\vel{\mu'}\|_{L^\infty_\bx}\,\|(\regtemp{\mu})^{-1}\|_{L^\infty_\bx}
     +\gamma\|\vel{\mu'}\|_{L^\infty_\bx}\,\bigl\|(\regtemp{\mu})^{-1}-(\regtemp{\mu'})^{-1}\bigr\|_{L^\infty_\bx}\\
  \le& L\,W_2(\mu,\mu').
\end{aligned}
\]
This proves \eqref{eq:wp-v}--\eqref{eq:wp-vmu} for all $a\in\{A,B,C\}$.

For Methods~A and~C, the position drift read off
from \eqref{eq:weighted-descent-velocity} and \eqref{eq:Ch-drift} has the form
\[
  \bb^a_\bx(t,\bz,\mu)=-\frac{P^a[\mu](\bx,\bv)}{w(\bx,\bv)},
  \qquad
  w(\bx,\bv)=1+\frac{|\bv-\bu^\obs(\bx)|^2}{V_*^2}\ge1,
\]
with $P^a[\mu]=P^a_0[\mu](\bx)+P^a_1[\mu](\bx)^\top\bv+P^a_2[\mu](\bx)|\bv|^2$ a polynomial
in $\bv$ of degree $\le2$.
For Method~A,
\[
\begin{aligned}
  P^A_0[\mu]&=\gamma_1\,\nabla_\bx\bigl[K_h*\bigl(K_h*B_0\mu-\rho^\obs\bigr)\bigr],\\
  P^A_1[\mu]&=\gamma_2\,\nabla_\bx\bigl[K_h*\bigl(K_h*B_1\mu-\bj^\obs\bigr)\bigr],\\
  P^A_2[\mu]&=\frac{\gamma_3}{2}\,\nabla_\bx\bigl[K_h*\bigl(K_h*B_2\mu-\mathcal K^\obs\bigr)\bigr].
\end{aligned}
\]
With Assumption~\ref{ass:wp-reg} and Step~1,
\[
  \|P^A_j[\mu]\|_{W^{1,\infty}_\bx}\le L,
  \qquad
  \|P^A_j[\mu]-P^A_j[\mu']\|_{L^\infty_\bx}\le L\,W_2(\mu,\mu'),
  \qquad j=0,1,2,
\]
since $K_h*B_0\mu-\rho^\obs,K_h*B_1\mu-\bj^\obs, K_h*B_2\mu-\mathcal K^\obs \in W^{2,\infty}_\bx$ and convolution does not increase the sup norm, which implies $\nabla_\bx\bigl[K_h*\bigl(K_h*B_0\mu-\rho^\obs\bigr)\bigr],\nabla_\bx\bigl[K_h*\bigl(K_h*B_1\mu-\bj^\obs\bigr)\bigr],\nabla_\bx\bigl[K_h*\bigl(K_h*B_2\mu-\mathcal K^\obs\bigr)\bigr] \in W^{1,\infty}_\bx$.

For Method~C, collecting \eqref{equ:regularized} the powers of $\bv$ ,
\[
\begin{aligned}
  P^C_2[\mu]&=\frac{\gamma}{2}\,\nabla_\bx \left[K_h*\bigl((\Theta^\obs)^{-1}-\regtemp{\mu}^{-1}\bigr)\right],
  \\
  P^C_1[\mu]&=\gamma\nabla_\bx \left[K_h*\bigl(\vel{\mu}\regtemp{\mu}^{-1}-\bu^\obs(\Theta^\obs)^{-1}\bigr)\right],\\
  P^C_0[\mu]&=\gamma\,\nabla_\bx \Bigl[K_h* \Bigl(
      \log\frac{\dens{\mu}}{\rho^\obs}-\frac d2\log\frac{\regtemp{\mu}}{\Theta^\obs}
       -\frac{|\vel{\mu}|^2}{2\regtemp{\mu}}\\
  &\hspace{7em}
       +\frac{|\bu^\obs|^2}{2\Theta^\obs}
       +\frac{d\varepsilon}{2}\bigl((\Theta^\obs)^{-1}-\regtemp{\mu}^{-1}\bigr)\Bigr)\Bigr].
\end{aligned}
\]
The lower bounds $\dens{\mu}\ge\kappa_h$, $\regtemp{\mu}\ge\varepsilon$, $\rho^\obs\ge\rho^\obs_{\min}$,
$\Theta^\obs\ge\varepsilon$ with \eqref{eq:pf-mom}--\eqref{eq:pf-Tinv-2}, the algebra property of $W^{2,\infty}_\bx$, and the smooth map $t\mapsto\log t$  on $[\varepsilon,L+\varepsilon]$ and $[\kappa_h,L]$ for $\Theta[\mu]$ and $\rho_h[\mu]$, respectively, yields
\[
\begin{gathered}
  \bigl\|\regtemp{\mu}^{-1}\bigr\|_{W^{2,\infty}_\bx},\
  \bigl\|\log\regtemp{\mu}\bigr\|_{W^{2,\infty}_\bx},\
  \bigl\|\log\dens{\mu}\bigr\|_{W^{2,\infty}_\bx},\\
  \bigl\|\vel{\mu}\bigr\|_{W^{2,\infty}_\bx},\
  \bigl\||\vel{\mu}|^2\regtemp{\mu}^{-1}\bigr\|_{W^{2,\infty}_\bx}\ \le L,
\end{gathered}
\]
and the observed analogues $\le L$ by Assumption~\ref{ass:wp-reg}, thus
\[
\begin{aligned}
  \|P^C_2[\mu]\|_{W^{1,\infty}_\bx}
  &\le\frac\gamma2\bigl(\|\regtemp{\mu}^{-1}\|_{W^{2,\infty}_\bx}+\|(\Theta^\obs)^{-1}\|_{W^{2,\infty}_\bx}\bigr)\le L,\\
  \|P^C_1[\mu]\|_{W^{1,\infty}_\bx}
  &\le\gamma\bigl(\|\vel{\mu}\regtemp{\mu}^{-1}\|_{W^{2,\infty}_\bx}+\|\bu^\obs(\Theta^\obs)^{-1}\|_{W^{2,\infty}_\bx}\bigr)\le L,\\
  \|P^C_0[\mu]\|_{W^{1,\infty}_\bx}
  &\le\gamma\bigl(\|\log\dens{\mu}\|_{W^{2,\infty}_\bx}+\frac d2\|\log\regtemp{\mu}\|_{W^{2,\infty}_\bx}\\
& +\frac12\||\vel{\mu}|^2\regtemp{\mu}^{-1}\|_{W^{2,\infty}_\bx}+\frac{d\varepsilon}2\|\regtemp{\mu}^{-1}\|_{W^{2,\infty}_\bx}\\
& +\text{(observed analogues)}\bigr)\le L.
\end{aligned}
\]
The observed terms cancel in each difference, so \eqref{eq:pf-quot}--\eqref{eq:pf-Tinv-2} give
\[
  \|P^C_j[\mu]-P^C_j[\mu']\|_{L^\infty_\bx}\le L\,W_2(\mu,\mu'),\qquad j=0,1,2,
\]
using a similar mean value theorem argument and adding and subtracting mixed terms as in the previous steps. We have that 
\[
\begin{aligned}
\frac{1+|\bv|+|\bv|^2}{w(\bx,\bv)} &= \frac{1+|\bv|+|\bv|^2}{1+\tfrac{|\bv-\bu^\obs(\bx)|^2}{V_*^2}} = \frac{1+|\bv|+|\bv|^2}{1+\tfrac{|\bv|^2-2|\bv||\bu^\obs|+|\bu^\obs|^2}{V_*^2}}\\&\le\frac{1+|\bv|+|\bv|^2}{1+\tfrac{|\bv|^2-2L|\bv|}{V_*^2}},
\end{aligned}
\]
and as $|\bv|\to\infty$, this converges to $V_*^2$. Since $\bv \mapsto (1+|\bv|+|\bv|^2)/w(\bx,\bv)$ is continuous on $\mathbb R^d$, we must have $(1+|\bv|+|\bv|^2)/w\le V_*^2+C$ uniformly in $\bx$.
Then for $a = A,C$, with
$\|P^a_j[\mu]\|_{W^{1,\infty}_\bx}\le L$, and $w\ge1$,
\[
  |\bb^a_\bx|=\frac{|P^a[\mu]|}{w}\le L\,\frac{1+|\bv|+|\bv|^2}{w}\le L.
\]
With $\nabla_\bv P^a=P^a_1+2P^a_2\bv$, $\nabla_\bv w=2(\bv-\bu^\obs)/V_*^2$ and
$\nabla_\bx w=-2(\bv-\bu^\obs)^\top\nabla_\bx\bu^\obs/V_*^2$,
\[
\begin{aligned}
      |\nabla_\bv\bb^a_\bx|
  \le& L\,\frac{1+|\bv|}{w}+|\bb^a_\bx|\,\frac{2}{V_*^2}\frac{|\bv-\bu^\obs|}{w}\le L,\\
  |\nabla_\bx\bb^a_\bx|
  \le& L\,\frac{1+|\bv|+|\bv|^2}{w}+|\bb^a_\bx|\,\frac{2\|\bu^\obs\|_{W^{1,\infty}_\bx}}{V_*^2}\frac{|\bv-\bu^\obs|}{w}\le L,
\end{aligned}
\]
since $\bv\mapsto|\bv-\bu^\obs|/w(\bx,\bv)$ is bounded by $V_*/2$ on $\mathbb R^d$ uniformly in $\bx$.
Combining these two gives $|\bb^a_\bx(t,\bz,\mu)-\bb^a_\bx(t,\bz',\mu)|\le L|\bz-\bz'|$, which with the last
display is \eqref{eq:wp-x} for $a\in\{A,C\}$.
Since $w$ independent of $\mu$,
\[
  |\bb^a_\bx(t,\bz,\mu)-\bb^a_\bx(t,\bz,\mu')|=\frac{|P^a[\mu]-P^a[\mu']|}{w}\le L\,W_2(\mu,\mu').
\]
For Method~B, the position drift from \eqref{eq:split-x} is
$\bb^B_\bx=-\gamma_1\nabla_\bx\left[K_h*\left(K_h*B_0\mu-\rho^\obs\right)\right]$,
which is independent of $\bv$. By \eqref{eq:pf-mom} for $i=0$,
$\rho^\obs\in W^{2,\infty}_\bx$, and the non-expansiveness of $K_h*$,
\[
\begin{aligned}
&|\bb^B_\bx|\le\gamma_1\bigl(\|\nabla_\bx\dens{\mu}\|_{L^\infty_\bx}+\|\nabla_\bx\rho^\obs\|_{L^\infty_\bx}\bigr)\le \gamma_1\bigl(\|\dens{\mu}\|_{W^{2,\infty}_\bx}+\|\rho^\obs\|_{W^{2,\infty}_\bx}\bigr)\le L,\\ &|\nabla_\bx\bb^B_\bx|\le\gamma_1\bigl(\|\nabla^2_\bx\dens{\mu}\|_{L^\infty_\bx}+\|\nabla^2_\bx\rho^\obs\|_{L^\infty_\bx}\bigr)\le\gamma_1\bigl(\|\dens{\mu}\|_{W^{2,\infty}_\bx}+\|\rho^\obs\|_{W^{2,\infty}_\bx}\bigr)\le L,
\end{aligned}
\]
while $\nabla_\bv\bb^B_\bx=0$, and by the $i=0$ Lipschitz bound of \eqref{eq:pf-mom},
\[
  |\bb^B_\bx[\mu]-\bb^B_\bx[\mu']|\le\gamma_1\|\nabla_\bx\dens{\mu}-\nabla_\bx\dens{\mu'}\|_{L^\infty_\bx}\le\gamma_1\|\dens{\mu}-\dens{\mu'}\|_{W^{2,\infty}_\bx}\le L\,W_2(\mu,\mu').
\]
This is \eqref{eq:wp-x} for $a=B$, completing the proof.
\end{proof}
\subsection{Proof of Lemma \ref{lem:local-wp}} \label{app:sub:local-wp}
\begin{lemma}[Empirical coefficient regularity]
\label{lem:empirical-coefficient-regularity}
Let Assumption~\ref{ass:wp-reg} hold, fix \(a\in\{A,B,C\}\) and \(N_p\ge1\), and set
\(
  \mathcal Z:=\mathbb T^d\times\mathbb R^d.
\)
For a particle configuration
\(
  \mathbf Z=(\bz^1,\ldots,\bz^{N_p})\in\mathcal Z^{N_p},
  \bz^i=(\bx^i,\bv^i),
\)
define its empirical measure by
\(
  \mu_{\mathbf Z}^{N_p}:=\frac1{N_p}\sum_{j=1}^{N_p}\delta_{\bz^j}.
\)
Let
\( 
  \varrho_{\mathcal Z}(\bz,\widetilde\bz)
  :=
  (
    d_{\mathbb T^d}(\bx,\widetilde\bx)^2
    +|\bv-\widetilde\bv|^2
  )^{1/2},
\)
and equip \(\mathcal Z^{N_p}\) with
\(
  \varrho_{N_p}(\mathbf Z,\widetilde{\mathbf Z})
  :=
  \max_{1\le j\le N_p}
  \varrho_{\mathcal Z}(\bz^j,\widetilde\bz^j).
\)
Then the drift of the finite-particle system is locally Lipschitz in the
particle configuration. Specifically, for every \(R<\infty\), there exists
\(L_R<\infty\), independent of \(N_p\) and \(t\), such that, whenever
\[
  \max_{1\le j\le N_p}|\bv^j|\le R,
  \qquad
  \max_{1\le j\le N_p}|\widetilde\bv^j|\le R,
\]
one has, for every \(i=1,\ldots,N_p\),
\begin{equation}
\begin{aligned}
&\Bigl| \bv^i+\bb^a_\bx(t,\bz^i,\mu_{\mathbf Z}^{N_p})
  -\widetilde\bv^i
  -\bb^a_\bx
     \bigl(t,\widetilde\bz^i,\mu_{\widetilde{\mathbf Z}}^{N_p}\bigr)
 \Bigr|\\
&+
 \Bigl|
   \field{\mu_{\mathbf Z}^{N_p}}(\bx^i)
   +\bb^a_\bv(t,\bz^i,\mu_{\mathbf Z}^{N_p})
   -\nu\bv^i
   -\field{\mu_{\widetilde{\mathbf Z}}^{N_p}}(\widetilde\bx^i)
   -\bb^a_\bv
      \bigl(t,\widetilde\bz^i,\mu_{\widetilde{\mathbf Z}}^{N_p}\bigr)
   +\nu\widetilde\bv^i
 \Bigr| \\
 &\le
L_R\,\varrho_{N_p}(\mathbf Z,\widetilde{\mathbf Z}).
\label{eq:empirical-drift-lip}
\end{aligned}
\end{equation}
\end{lemma}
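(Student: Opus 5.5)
The plan is to reduce everything to Lemma~\ref{lem:field} and Lemma~\ref{lem:reg-bounds}. First we place the empirical measures inside a fixed moment ball. If $\max_j|\bv^j|\le R$, then $\int|\bv|^4\diff\mu_{\mathbf Z}^{N_p}=\frac1{N_p}\sum_j|\bv^j|^4\le R^4$. Hence both $\mu_{\mathbf Z}^{N_p}$ and $\mu_{\widetilde{\mathbf Z}}^{N_p}$ lie in $\mathcal P_{R_4}$ with $R_4:=R^4$, and this bound does not depend on $N_p$. Lemma~\ref{lem:reg-bounds} then supplies a constant $L=L(R^4)$, independent of $t$ and $N_p$, that controls the position drift \eqref{eq:wp-x} and the affine coefficients \eqref{eq:wp-v}--\eqref{eq:wp-vmu}. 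Lemma~\ref{lem:field} controls the field.

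Next we compare the two empirical measures by coupling particles index by index. Take the coupling $\pi=\frac1{N_p}\sum_j\delta_{(\bz^j,\widetilde\bz^j)}$. This gives
\[
W_2\bigl(\mu_{\mathbf Z}^{N_p},\mu_{\widetilde{\mathbf Z}}^{N_p}\bigr)^2\le\frac1{N_p}\sum_j\varrho_{\mathcal Z}(\bz^j,\widetilde\bz^j)^2\le\varrho_{N_p}(\mathbf Z,\widetilde{\mathbf Z})^2,
\]
with the torus distance in the position component, which is the distance used implicitly in the earlier lemmas. This step is what makes $L_R$ independent of $N_p$. It is also the reason for choosing the max-metric $\varrho_{N_p}$ rather than a sum.

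We then bound the drift differences term by term for a fixed index $i$. The free transport term satisfies $|\bv^i-\widetilde\bv^i|\le\varrho_{N_p}$. For the position drift, \eqref{eq:wp-x} gives $|\bb^a_\bx(t,\bz^i,\mu)-\bb^a_\bx(t,\widetilde\bz^i,\widetilde\mu)|\le L(\varrho_{\mathcal Z}(\bz^i,\widetilde\bz^i)+W_2)\le 2L\varrho_{N_p}$.

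For the field we split
\[
\field{\mu}(\bx^i)-\field{\widetilde\mu}(\widetilde\bx^i)=\bigl[\field{\mu}(\bx^i)-\field{\mu}(\widetilde\bx^i)\bigr]+\bigl[\field{\mu}-\field{\widetilde\mu}\bigr](\widetilde\bx^i).
\]
The first bracket is controlled by the $W^{1,\infty}$ bound of Lemma~\ref{lem:field}, which gives $\|K_E\|_{W^{1,\infty}}\,d_{\mathbb T^d}(\bx^i,\widetilde\bx^i)$. The second is controlled by its $W_2$-Lipschitz bound. The collision term contributes $\nu|\bv^i-\widetilde\bv^i|$.

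The velocity nudging term is the only place where the velocity cutoff $R$ enters a second time. Using the affine structure \eqref{eq:wp-v}, we write
\[
\alpha[\mu](\bx^i)\bv^i-\alpha[\widetilde\mu](\widetilde\bx^i)\widetilde\bv^i=\alpha[\mu](\bx^i)(\bv^i-\widetilde\bv^i)+\bigl(\alpha[\mu](\bx^i)-\alpha[\mu](\widetilde\bx^i)\bigr)\widetilde\bv^i+\bigl(\alpha[\mu]-\alpha[\widetilde\mu]\bigr)(\widetilde\bx^i)\,\widetilde\bv^i .
\]
By \eqref{eq:wp-vmu}, the first piece is bounded by $L\varrho_{N_p}$, the second by $LR\varrho_{N_p}$, and the third by $LR\,W_2\le LR\varrho_{N_p}$. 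The $\beta$ term is handled the same way, without the factor $R$. Collecting everything yields \eqref{eq:empirical-drift-lip} with
\[
L_R=C\bigl(1+\nu+\|K_E\|_{W^{1,\infty}}+L(R^4)(1+R)\bigr),
\]
which depends on neither $t$ nor $N_p$.

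The main point needing care is the compatibility of the torus geometry with the $W^{1,\infty}_\bx$ bounds. The mean-value estimates must be applied along the shortest periodic geodesic, so that Lipschitz-in-$\bx$ bounds hold with respect to $d_{\mathbb T^d}$. Likewise, the $W_2$ appearing in Lemmas~\ref{lem:field} and~\ref{lem:reg-bounds} must be the one built on $\varrho_{\mathcal Z}$. I expect both to follow directly from periodicity of all kernels and observed fields.
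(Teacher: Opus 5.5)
Your proposal is correct and follows essentially the same route as the paper. Both arguments use the index-wise coupling to get $W_2(\mu_{\mathbf Z}^{N_p},\mu_{\widetilde{\mathbf Z}}^{N_p})\le\varrho_{N_p}(\mathbf Z,\widetilde{\mathbf Z})$, place both empirical measures in $\mathcal P_{R^4}$, and apply Lemmas~\ref{lem:field} and~\ref{lem:reg-bounds} term by term, including the same splitting of the affine velocity drift where the bound $|\widetilde\bv^i|\le R$ enters.
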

\begin{proof}
Consider the coupling
\(
  \pi_{\mathbf Z,\widetilde{\mathbf Z}}^{N_p}
  :=
  \frac1{N_p}\sum_{j=1}^{N_p}
  \delta_{(\bz^j,\widetilde\bz^j)}
  \in
  \Gamma\bigl(
    \mu_{\mathbf Z}^{N_p},
    \mu_{\widetilde{\mathbf Z}}^{N_p}
  \bigr).
\)
By the definition of the Wasserstein-2 distance,
\[
\begin{aligned}
W_2^2(\mu_\bZ^{N_p},\mu_{\widetilde \bZ}^{N_p}) &\le \int_{\mathcal Z \times\mathcal Z} \varrho^2_{\mathcal Z}(\bz,\widetilde \bz) \diff \pi^{N_p}_{\bZ,\widetilde\bZ}=\frac{1}{N_p}\sum^{N_p}_{j=1}\varrho_{\mathcal Z}^2(\bz^j,\widetilde \bz^j)\\
&\le \frac{1}{N_p}\sum_{j=1}^{N_p} \varrho_{N_p}^2(\bZ,\widetilde\bZ)=\varrho_{N_p}(\bZ,\widetilde\bZ)
\end{aligned}
\]
Hence,
\begin{equation}
  W_2\bigl(
    \mu_{\mathbf Z}^{N_p},
    \mu_{\widetilde{\mathbf Z}}^{N_p}
  \bigr)
  \le
  \varrho_{N_p}(\mathbf Z,\widetilde{\mathbf Z}).
\label{eq:empirical-W2}
\end{equation}
Moreover,
\[
  \int_{\mathcal Z}|\bv|^4
  \,\mu_{\mathbf Z}^{N_p}(\diff\bz)
  \le R^4,
  \qquad
  \int_{\mathcal Z}|\bv|^4
  \,\mu_{\widetilde{\mathbf Z}}^{N_p}(\diff\bz)
  \le R^4.
\]
So both empirical measures belong to the moment ball
\(\mathcal P_{R^4}\) appearing in Lemma~\ref{lem:reg-bounds}.
The position-drift estimate follows immediately from
\eqref{eq:wp-x} and \eqref{eq:empirical-W2}:
\[
\begin{aligned}
\Bigl|
  \bb^a_\bx(t,\bz^i,\mu)
  - \bb^a_\bx(t,\widetilde\bz^i,\widetilde\mu)
\Bigr| &\le L_{R^4}(\varrho_{\mathcal Z}(\bz^i,\widetilde\bz^i)+W_2(\mu,\widetilde\mu))\\
&\le L_{R^4}\varrho_{N_p}(\bZ,\widetilde\bZ).
\end{aligned}
\]
For the velocity drift, write
\[
  \bb^a_\bv(t,\bz,\mu)
  =
  \alpha^a[\mu](t,\bx)\bv+\beta^a[\mu](t,\bx).
\]
Then, using \eqref{eq:wp-v}--\eqref{eq:wp-vmu},
\begin{align*}
&\Bigl|
  \bb^a_\bv(t,\bz^i,\mu)
  - \bb^a_\bv(t,\widetilde\bz^i,\widetilde\mu)
\Bigr|
\\
=&
\Bigl|
  \alpha^a[\mu](t,\bx^i)\bv^i
  -
  \alpha^a[\widetilde\mu](t,\widetilde\bx^i)
  \widetilde\bv^i
  +
  \beta^a[\mu](t,\bx^i)
  -
  \beta^a[\widetilde\mu](t,\widetilde\bx^i)
\Bigr|
\\
\le&
  \bigl|\alpha^a[\mu](t,\bx^i)\bigr|
  \,|\bv^i-\widetilde\bv^i|
  +
  \bigl|
    \alpha^a[\mu](t,\bx^i)
    -
    \alpha^a[\mu](t,\widetilde\bx^i)
  \bigr|
  \,|\widetilde\bv^i|\\
  &   +
  \bigl|
    \alpha^a[\mu](t,\widetilde\bx^i)
    -
    \alpha^a[\widetilde\mu](t,\widetilde\bx^i)
  \bigr|
  |\widetilde\bv^i|
  +
  \bigl|
    \beta^a[\mu](t,\bx^i)
    -
    \beta^a[\mu](t,\widetilde\bx^i)
  \bigr|
\\
&\quad
  +
  \bigl|
    \beta^a[\mu](t,\widetilde\bx^i)
    -
    \beta^a[\widetilde\mu](t,\widetilde\bx^i)
  \bigr|
\\
&\le
  L_{R}|\bv^i-\widetilde\bv^i|
  +
  L_{R}|\widetilde\bv^i|\,
  d_{\mathbb T^d}(\bx^i,\widetilde\bx^i)
\\
&\quad
  +
  L_{R}|\widetilde\bv^i|\,
  W_2(\mu,\widetilde\mu)
  +
  L_{R}d_{\mathbb T^d}(\bx^i,\widetilde\bx^i)
  +
  L_{R}W_2(\mu,\widetilde\mu)
\\
&\le
  L_R
  \left(
    \varrho_{\mathcal Z}(\bz^i,\widetilde\bz^i)
    +
    W_2(\mu,\widetilde\mu)
  \right)\le
  L_R\,\varrho_{N_p}(\mathbf Z,\widetilde{\mathbf Z}),
\end{align*}
where in the last two lines we used
\(
|\widetilde\bv^i|\le R
\),
\eqref{eq:empirical-W2}, and
\(
\varrho_{\mathcal Z}(\bz^i,\widetilde\bz^i)
\le \varrho_{N_p}(\mathbf Z,\widetilde{\mathbf Z})
\).

For the position component, adding and subtracting
\(\bb_\bx^a(t,\widetilde\bz^i,\mu)\) gives
\begin{align*}
&\Bigl|
  \bv^i+\bb_\bx^a(t,\bz^i,\mu)
  -
  \widetilde\bv^i
  -
  \bb_\bx^a(t,\widetilde\bz^i,\widetilde\mu)
\Bigr|
\\
&\le
  |\bv^i-\widetilde\bv^i|
  +
  \Bigl|
    \bb_\bx^a(t,\bz^i,\mu)
    -
    \bb_\bx^a(t,\widetilde\bz^i,\mu)
  \Bigr|
\\
&\quad
  +
  \Bigl|
    \bb_\bx^a(t,\widetilde\bz^i,\mu)
    -
    \bb_\bx^a(t,\widetilde\bz^i,\widetilde\mu)
  \Bigr|
\\
&\le
  |\bv^i-\widetilde\bv^i|
  +
  L_{R^4}\varrho_{\mathcal Z}(\bz^i,\widetilde\bz^i)
  +
  L_{R^4}W_2(\mu,\widetilde\mu)
\\
&\le
  L_R\,\varrho_{N_p}(\mathbf Z,\widetilde{\mathbf Z}),
\end{align*}
where we used \eqref{eq:wp-x} and
\eqref{eq:empirical-W2}.
For the self-consistent field, Lemma~\ref{lem:field}
gives
\begin{align*}
&\Bigl|
  \field{\mu}(\bx^i)
  -
  \field{\widetilde\mu}(\widetilde\bx^i)
\Bigr|
\\
&\le
  \Bigl|
    \field{\mu}(\bx^i)
    -
    \field{\mu}(\widetilde\bx^i)
  \Bigr|
  +
  \Bigl|
    \field{\mu}(\widetilde\bx^i)
    -
    \field{\widetilde\mu}(\widetilde\bx^i)
  \Bigr|
\\
&\le
  L\,d_{\mathbb T^d}(\bx^i,\widetilde\bx^i)
  +
  L\,W_2(\mu,\widetilde\mu)
\\
&\le
  L\,\varrho_{N_p}(\mathbf Z,\widetilde{\mathbf Z}).
\end{align*}
Consequently,
\begin{align*}
&\Bigl|
  \field{\mu}(\bx^i)
  +
  \bb_\bv^a(t,\bz^i,\mu)
  -
  \nu\bv^i
  -
  \field{\widetilde\mu}(\widetilde\bx^i)
  -
  \bb_\bv^a(t,\widetilde\bz^i,\widetilde\mu)
  +
  \nu\widetilde\bv^i
\Bigr|
\\
&\le
  \Bigl|
    \field{\mu}(\bx^i)
    -
    \field{\widetilde\mu}(\widetilde\bx^i)
  \Bigr|
  +
  \Bigl|
    \bb_\bv^a(t,\bz^i,\mu)
    -
    \bb_\bv^a(t,\widetilde\bz^i,\widetilde\mu)
  \Bigr|
  +
  \nu|\bv^i-\widetilde\bv^i|
\\
&\le
  L_R\,\varrho_{N_p}(\mathbf Z,\widetilde{\mathbf Z}).
\end{align*}
Combining the position and velocity estimates proves
\eqref{eq:empirical-drift-lip}.
\end{proof}

\begin{proof}[Proof of Lemma~\ref{lem:local-wp}]
Lift the positions periodically and write
$\mathbf Z=(\mathbf X,\mathbf V)\in\mathbb R^{2dN_p}=:\mathsf E$, an open set with
$\partial \mathsf E=\emptyset$. In integral form, the lifted particle system
\eqref{eq:nudged-particles} is an SDE of the type treated in
\cite[Theorem~D1]{birrell2020langevin}, with constant forcing
$\mathbf Z_0$ (a continuous semimartingale valued in $\mathsf E$), drift
$\mathbf B$ given by the bracketed terms of \eqref{eq:nudged-particles},
driven by a $dN_p$-dimensional Brownian motion with constant diffusion
matrix
\[
  \boldsymbol\Sigma
  =
  \begin{pmatrix}
    \mathbf{0}_{dN_p\times dN_p}\\
    \sqrt{2\nu\Theta_{\mathrm{LB}}}\,\mathbf{I}_{dN_p}
  \end{pmatrix},
\]
so the noise enters the velocity components only.

We verify the hypotheses of \cite[Theorem~D1]{birrell2020langevin}.
$\mathbf B$ is Borel measurable in $(t,\mathbf z)$ by
Assumption~\ref{ass:wp-reg}, and $\boldsymbol\Sigma$ is constant. Fix a
compact set $K\subset\mathsf E$ and choose $R$ with
$K\subset\{\|\mathbf v\|_{\max}\le R\}$. By
Lemmas~\ref{lem:reg-bounds} and~\ref{lem:empirical-coefficient-regularity},
the drift is bounded on $\{\|\mathbf v\|_{\max}\le R\}$ and Lipschitz in
the configuration with a
constant $L_R$ uniform in $t$ and $N_p$. That estimate is stated for the
metric $\varrho_{N_p}$, whose position component is the torus distance.
Since
$d_{\mathbb T^d}(\bx,\widetilde\bx)\le|\bx-\widetilde\bx|$ and the
coefficients are periodic in the lifted positions, the same bound holds
for the Euclidean distance on $\mathsf E=\mathbb R^{2dN_p}$, uniformly
over the lifted position, and in particular on every compact $K$.
\cite[Theorem~D1]{birrell2020langevin} therefore yields a pathwise-unique
maximal strong solution up to an blow-up time $\tau_\infty:=e$,
requiring no moment assumption on $\mathbf Z_0$.

Because $\partial \mathsf E=\emptyset$, alternative~(3) of the blow-up in \cite[Theorem~D1]{birrell2020langevin} is vacuous, so on
$\{\tau_\infty<\infty\}$, there is a sequence $t_n\uparrow\tau_\infty$ with
$|\mathbf Z_{t_n}|\to\infty$. It remains to show this forces the
velocities to blow up. By the position-drift bound of
Lemma~\ref{lem:reg-bounds}, whenever $\sup_{s<t}\|\mathbf V_s\|_{\max}\le R$,
one has $|\bb^a_\bx(s,\cdot)|\le L$, so for the lifted positions and every
$t<\tau_\infty$,
\[
  |\widetilde\bX^i_t-\widetilde\bX^i_0|
  \le\int_0^t\bigl(|\bV^i_s|+|\bb^a_\bx(s,\bZ^i_s,\mu^{N_p}_s)|\bigr)\,\diff s
  \le t\bigl(\sup_{s<t}\|\mathbf V_s\|_{\max}+L\bigr).
\]
Hence if $\sup_{t<\tau_\infty}\|\mathbf V_t\|_{\max}<\infty$, then
$\sup_{t<\tau_\infty}|\mathbf Z_t|<\infty$, contradicting
$|\mathbf Z_{t_n}|\to\infty$. Therefore,
$\limsup_{t\uparrow\tau_\infty}\max_{1\le i\le N_p}|\bV^i_t|=\infty$. For
\[
\tau_R = \inf\{t\ge 0:\|\bV_t\|_{\max}\ge R\},
\]
we have that $\tau_R\le \tau_\infty$ for every finite $R$, since on $[0,\tau_R)$, velocities are  $<R$ by the definition of $\tau_R$ so the solution hasn't blown up yet. Hence, $\lim\limits_{R\to\infty}\tau_R\le\tau_\infty$. For the other direction, we have for all $t<\tau_\infty$, $R_0:=\sup\limits_{0\le s\le t} \|\bV_s\|_{\max}<\infty$ and $\tau_{R_0+1}>t$. So using that $\tau_R$ is nondecreasing in $R$, $\lim\limits_{R\to\infty}\tau_R = \sup\limits_R \tau_R \ge \tau_{R_0+1}>t$ for all $t<\tau_\infty$, which implies $\lim\limits_{R\to\infty}\tau_R \ge \tau_\infty$. Therefore, finite-time blow-up can occur only through the velocity variables, and
\[
  \tau_\infty=\lim_{R\to\infty}\tau_R.
\] On the original torus state space, where positions
live in the compact $\mathbb T^{dN_p}$, the restriction of blow-up to the
velocities is immediate.
\end{proof}
\subsection{Proof of Lemma \ref{lem:moment-generator}}
\label{app:sub:moment-generator}
\begin{proof}
For a smooth function $\varphi=\varphi(\bv)$, define the velocity generator:
\[  
\mathscr A_{t,\mu}^a\varphi(\bz)
:=
\Bigl(
\field{\mu}(\bx)
+\bb_{\bv}^a(t,\bz,\mu)
-\nu\bv
\Bigr)\cdot\nabla_{\bv}\varphi(\bv)
+\nu\Theta_{\mathrm{LB}}\Delta_{\bv}\varphi(\bv).
\]
For $\varphi_q(\bv)=|\bv|^q$, $q\ge2$,
\(
  \nabla_{\bv}\varphi_q
  =
  q|\bv|^{q-2}\bv,
  \Delta_{\bv}\varphi_q
  =
  q(q+d-2)|\bv|^{q-2},
\)
and hence
\[
\begin{aligned}
&   \mathcal D_q^a(t,\mu)
:=
  \int
  \mathscr A_{t,\mu}^a\varphi_q(\bz)
  \,\mu(\diff\bz)\\
  & = 
\int
\left[
q|\bv|^{q-2}\bv\cdot
\Bigl(
\field{\mu}(\bx)
+\bb_{\bv}^a(t,\bz,\mu)
-\nu\bv
\Bigr)
+
\nu\Theta_{\mathrm{LB}}q(q+d-2)|\bv|^{q-2}
\right]
\mu(\diff\bz). 
\end{aligned}
\]
It therefore suffices to prove that, for every $\zeta>0$,
\begin{equation}\label{equ:D_bound}
  \mathcal D_q^a(t,\mu)
  \le
  q(\bar\alpha_a-\nu+\zeta)m_q(\mu)
  +
  C_q(\zeta),
  \qquad
  \mu\in\mathcal P_q,
\end{equation}
where $m_q(\mu) = \int_{\mathcal Z} |\bv|^q \diff\mu(\bx,\bv)$.
From $\kappa_h\le K_h\le K_\infty$ and
$\int K_h=1$,
\[
  \kappa_h\le\dens{\mu}\le K_\infty,\qquad
  0\le\kin{\mu}\le\tfrac{K_\infty}2 m_2(\mu),\qquad
  |\mom{\mu}|^2\le 2\dens{\mu}\kin{\mu},
\]
where the last is by Cauchy--Schwarz:
\[
\begin{aligned}
|\mom{\mu}|^2 &= \left|\int_{\mathcal Z} \bv K_h(\bx-\by)\mu(\diff \by,\diff \bv)\right|^2\\
&= \left|\int_{\mathcal Z} \bv \sqrt{K_h(\bx-\by)}\sqrt{K_h(\bx-\by)}\mu(\diff \by,\diff \bv)\right|^2\\
&\le \left|\int_{\mathcal Z} |\bv|^2 K_h(\bx-\by)\mu(\diff \by,\diff \bv)\right|\left|\int_{\mathcal Z} K_h(\bx-\by)\mu(\diff \by,\diff \bv)\right|\\
&= 2\rho_h[\mu]\mathcal K_h[\mu].
\end{aligned}
\] With $\dens{\mu}\ge\kappa_h$, these reproduce the bulk-velocity
bound $\|\vel{\mu}\|_\infty^2 = \sup_\bx \frac{|\mom{\mu}|^2}{\dens{\mu}^2} \leq \sup_\bx\frac{2\kin{\mu}}{\dens{\mu}}\leq \frac{K_\infty}{\kappa_h}m_2(\mu)$.

For Methods A and B, the velocity feedback is
\[
  \bb^a_\bv(t,\bz,\mu)
  =
  -\gamma_2 K_h*(\mom{\mu}-\bj^\obs)(\bx)
  -\gamma_3\bv\,K_h*(\kin{\mu}-\mathcal K^\obs)(\bx), \quad a \in\{A,B\}.
\]
Since $K_h\ge0$ and $\int_{\mathbb T^d}K_h=1$, Jensen's inequality
and $|\mom{\mu}|^2\le 2\dens{\mu}\kin{\mu}
\le 2K_\infty\kin{\mu}$ give
\begin{equation}
  |K_h*\mom{\mu}|^2
  \le
  K_h*|\mom{\mu}|^2
  \le
  2K_\infty\,K_h*\kin{\mu}.
  \label{eq:outer-jK-bound}
\end{equation}
For $a\in\{A,B\}$,
\[
\begin{aligned}
|\bv|^{q-2}\bv\cdot\bb^a_\bv
={}&
-\gamma_2|\bv|^{q-2}\bv\cdot(K_h*\mom{\mu})
-\gamma_3|\bv|^q(K_h*\kin{\mu})
\\
&+
\gamma_2|\bv|^{q-2}\bv\cdot(K_h*\bj^\obs)
+\gamma_3|\bv|^q(K_h*\mathcal K^\obs).
\end{aligned}
\]
The terms depending on $\mu$ are dissipative up to lower order.
Indeed, we have that
\[
\begin{aligned}
&-\gamma_3(K_h*\kin{\mu})|\bv|^q
-\gamma_2|\bv|^{q-2}\bv\cdot(K_h*\mom{\mu})\\
&\qquad\le
-\gamma_3(K_h*\kin{\mu})|\bv|^q
+\gamma_2|K_h*\mom{\mu}|\,|\bv|^{q-1}
\\
&\qquad\le
-\frac{\gamma_3}{2}(K_h*\kin{\mu})|\bv|^q
+\frac{\gamma_2^2K_\infty}{\gamma_3}|\bv|^{q-2}
\le
\frac{\gamma_2^2K_\infty}{\gamma_3}|\bv|^{q-2},
\end{aligned}
\]
where we used  Young's inequality 
$ab \le \tfrac{\epsilon a^2}{2} +\tfrac{b^2}{2\epsilon}$
with $\epsilon = \gamma_3$ and
\[
a = |\bv|^{q/2}\sqrt{K_h*\mathcal K_h[\mu]}, \qquad b= \frac{\gamma_2|\bv|^{q/2-1}|K_h*\bj_h[\mu]|}{\sqrt{K_h*\mathcal K_h[\mu]}},
\]
along with \eqref{eq:outer-jK-bound} for the second term:
\[
\begin{aligned}
\gamma_2|K_h*\bj_h[\mu]||\bv|^{q-1} &\le \frac{\gamma_3|\bv|^qK_h*\mathcal K_h[\mu]}{2}+\frac{\gamma_2^2|\bv|^{q-2}|K_h*\bj_h[\mu]|^2}{2\gamma_3 K_h*\mathcal K_h[\mu]}\\
&\le \frac{\gamma_3|\bv|^qK_h*\mathcal K_h[\mu]}{2}+ \frac{\gamma_2^2 K_\infty}{\gamma_3}|\bv|^{q-2}.
\end{aligned}
\]
For the observed terms, the positivity and unit-mass normalization of $K_h$
give
\[
0\le
\gamma_3K_h*\mathcal K^\obs
\le
\gamma_3\|\mathcal K^\obs\|_{L^\infty_{t,\bx}}
=
\bar\alpha_a,
\qquad
\|K_h*\bj^\obs\|_{L^\infty_{t,\bx}}
\le
\|\bj^\obs\|_{L^\infty_{t,\bx}}.
\]
Thus,
\[
\begin{aligned}
&\gamma_2|\bv|^{q-2}\bv\cdot(K_h*\bj^\obs)
+\gamma_3|\bv|^q(K_h*\mathcal K^\obs)
\\
&\qquad\le
\bar\alpha_a|\bv|^q
+\gamma_2\|\bj^\obs\|_{L^\infty_{t,\bx}}|\bv|^{q-1}.
\end{aligned}
\]
Combining the two estimates and integrating against $\mu$ yields
\begin{equation}
\int |\bv|^{q-2}\bv\cdot\bb^a_\bv\,\mu(\diff\bz)
\le
\bar\alpha_a m_q(\mu)
+
C\bigl(m_{q-1}(\mu)+m_{q-2}(\mu)\bigr),
\qquad a\in\{A,B\}.
  \label{eq:AB-q-bound}
\end{equation} 
For Method C, the regularized velocity feedback can be written as
\[
\begin{aligned}
\bb^C_\bv(t,\bz,\mu)
=&
\gamma
K_h*
\left(
\regtemp{\mu}^{-1}
-
(\Theta^\obs)^{-1}
\right)(\bx)\,\bv\\
&+
\gamma
K_h*
\left(
\bu^\obs(\Theta^\obs)^{-1}
-
\vel{\mu}\regtemp{\mu}^{-1}
\right)(\bx).
\end{aligned}
\]
Since $K_h\ge0$, $\int_{\mathbb T^d}K_h=1$, and
$\regtemp{\mu},\Theta^\obs\ge\varepsilon$, the negative term
\[
-\gamma
\bigl(K_h*(\Theta^\obs)^{-1}\bigr)
|\bv|^q
\]
in the integral may be ignored, while
\[
K_h*\regtemp{\mu}^{-1}\le \varepsilon^{-1}.
\]
Moreover,
\[
\left\|
K_h*(\vel{\mu}\regtemp{\mu}^{-1})
\right\|_{L^\infty_\bx}
\le
\frac{1}{\varepsilon}
\|\vel{\mu}\|_{L^\infty_\bx}
\le
\frac{1}{\varepsilon}
\sqrt{\frac{K_\infty}{\kappa_h}}\,
m_2(\mu)^{1/2},
\]
whereas
\[
\left\|
K_h*(\bu^\obs(\Theta^\obs)^{-1})
\right\|_{L^\infty_{t,\bx}}
\le C
\]
by Assumption~\ref{ass:wp-reg}. Consequently,
\[
\begin{aligned}
&\int
|\bv|^{q-2}\bv\cdot
\bb^C_\bv
\,\mu(\diff\bz)\le
\frac{\gamma}{\varepsilon}m_q(\mu)
+
\frac{\gamma}{\varepsilon}
\sqrt{\frac{K_\infty}{\kappa_h}}\,
m_2(\mu)^{1/2}m_{q-1}(\mu)
+
C m_{q-1}(\mu).
\end{aligned}
\]
By Lyapunov's inequality, $m_2(\mu)^{1/2}\le m_q(\mu)^{1/q}$ and $m_{q-1}(\mu)\le m_q(\mu)^{(q-1)/q}$, so
\[
m_2(\mu)^{1/2}m_{q-1}(\mu)
\le
m_q(\mu),
\]
and therefore
\begin{equation}
\int
|\bv|^{q-2}\bv\cdot
\bb^C_\bv(t,\bz,\mu)
\,\mu(\diff\bz)
\le
\frac{\gamma}{\varepsilon}
\left(
1+\sqrt{\frac{K_\infty}{\kappa_h}}
\right)
m_q(\mu)
+
C m_{q-1}(\mu).
\label{eq:C-q-bound}
\end{equation}

Recall that
\[
\begin{aligned}
\mathcal D_q^a(t,\mu)
={}&
q\int |\bv|^{q-2}\bv\cdot \field{\mu}(\bx)\,\mu(\diff\bz)
+
q\int |\bv|^{q-2}\bv\cdot \bb_\bv^a(t,\bz,\mu)\,\mu(\diff\bz)
\\
&\quad
-q\nu m_q(\mu)
+
\nu\Theta_{\mathrm{LB}}q(q+d-2)m_{q-2}(\mu).
\end{aligned}
\]
By Lemma~\ref{lem:field}, there is a constant $C_E$ such that
$\|\field{\mu}\|_{L^\infty_\bx}\le C_E$ uniformly in $\mu$. Hence,
\[
\left|
\int |\bv|^{q-2}\bv\cdot \field{\mu}(\bx)\,\mu(\diff\bz)
\right|
\le
C_E m_{q-1}(\mu).
\]
On the other hand, \eqref{eq:AB-q-bound} and \eqref{eq:C-q-bound} give, for all
$a\in\{A,B,C\}$,
\[
\int
|\bv|^{q-2}\bv\cdot \bb_\bv^a(t,\bz,\mu)
\,\mu(\diff\bz)
\le
\bar\alpha_a m_q(\mu)
+
C_q\bigl(m_{q-1}(\mu)+m_{q-2}(\mu)\bigr).
\]
Substituting these bounds into the displayed formula for $\mathcal D_q^a$ yields
\[
\mathcal D_q^a(t,\mu)
\le
q(\bar\alpha_a-\nu)m_q(\mu)
+
C_q\bigl(m_{q-1}(\mu)+m_{q-2}(\mu)\bigr).
\]
Finally, for every $0\le p<q$ and every $\epsilon'>0$, Young's inequality gives
\[
m_p(\mu)\le \epsilon' m_q(\mu)+C_{q,p}(\epsilon').
\]
Indeed for $\delta := \tfrac{q}{p}\epsilon'$, $|\bv|^p = (\delta|\bv|^q)^{p/q}\delta^{-p/q}$ and using Young's inequality with exponents $\tfrac{q}{p}$ and $\tfrac{q}{q-p}$, we have
\[
\begin{aligned}
|\bv|^p &\le \frac{p\delta|\bv|^q}{q} + \frac{q-p}{q}\delta^{-\tfrac{p}{q-p}}\\
&= \epsilon' |\bv|^q +C_{q,p}(\epsilon'),
\end{aligned}
\]
and integrating over $\mu(\diff \bz)$ gives the desired inequality.
Choosing \(\epsilon'\) small enough so that the lower-order terms contribute at most
\(q\zeta m_q(\mu)\), we obtain \eqref{equ:D_bound}.

Applying the generator of \eqref{eq:nudged-particles} to the configuration function
$\mathbf Z\mapsto m^{N_p}_q(\mathbf Z)=\frac1{N_p}\sum_{i=1}^{N_p}|\bv^i|^q$, for
$\mathbf Z=(\bz^1,\dots,\bz^{N_p})$, $\bz^i=(\bx^i,\bv^i)$, and using that the drift and the
constant diffusion  particle-wise, gives
\[
\bigl(\mathcal L_{N_p,t}m^{N_p}_q\bigr)(\mathbf Z)
=\int \mathscr A^a_{t,\mu^{N_p}_{\mathbf Z}}\varphi_q\,\diff\mu^{N_p}_{\mathbf Z}
=\mathcal D^a_q\bigl(t,\mu^{N_p}_{\mathbf Z}\bigr),
\qquad \mu^{N_p}_{\mathbf Z}:=\tfrac1{N_p}\textstyle\sum_{i=1}^{N_p}\delta_{\bz^i}.
\]
Evaluating at $\mathbf Z=\bZ_t$ up to the blow-up time and applying
\eqref{equ:D_bound} with $\mu=\mu^{N_p}_t\in\mathcal P_q$ gives \eqref{eq:Mq-generator}.
\end{proof}

\subsection{Proof of Corollary~\ref{cor:finite-time-moment}}
\label{app:sub:finite-time-moment}
\begin{proof}[Proof of Corollary~\ref{cor:finite-time-moment}]
Apply It\^o's formula to the stopped process
\(m_q^{N_p}(t\wedge\tau_R)\), where \(\tau_R\) is defined in
Lemma~\ref{lem:local-wp}. The stopped martingale has zero expectation, and
\eqref{eq:Mq-generator} gives
\[
\begin{aligned}
  \mathbb E\,m_q^{N_p}(t\wedge\tau_R)
  &\le
  \mathbb E m_q^{N_p}(0)
  +
  \mathbb E\int_0^{t\wedge\tau_R}
  \left[
    q(\bar\alpha_a-\nu+\zeta)m_q^{N_p}(s)+C_q(\zeta)
  \right]\diff s  \\
  &\le
  \mathbb E m_q^{N_p}(0)
  +
  \int_0^t
  \left[
    \max\{q(\bar\alpha_a-\nu+\zeta),0\}
    \mathbb E\,m_q^{N_p}(s\wedge\tau_R)
    +C_q(\zeta)
  \right]\diff s .
\end{aligned}
\]
Gronwall's inequality yields
\[
  \sup_{0\le t\le T_{\mathrm{fin}}}
  \mathbb E\,m_q^{N_p}(t\wedge\tau_R)
  \le
  \bigl(\mathbb E m_q^{N_p}(0)+C_q(\zeta)T_{\mathrm{fin}}\bigr)
  \exp\!\left(
    \max\{q(\bar\alpha_a-\nu+\zeta),0\}\,T_{\mathrm{fin}}
  \right).
\]
On the event \(\{\tau_R\le T_{\mathrm{fin}}\}\), continuity of the velocity paths implies
\[
  m_q^{N_p}(T_{\mathrm{fin}}\wedge\tau_R)
  =
  m_q^{N_p}(\tau_R) = \frac{1}{N_p}\sum\limits_{i=1}^{N_p} |\bv^i_{\tau_R}|^q
  \ge \frac{1}{N_p}|\bv^{i^*}_{\tau_R}|^q=
  \frac{R^q}{N_p},
\]
where $1\le i^*\le N_p$ is a particle index such that $|\bv^{i^*}_{\tau_R}|=R$.
Therefore, by Markov's inequality,
\[
  \mathbb P(\tau_R\le T_{\mathrm{fin}})
  \le \mathbb P\left(m^{N_p}_q(T_{\mathrm{fin}}\wedge \tau_R)\ge \frac{R^q}{N_p}\right) \le
  \frac{N_p}{R^q}
  \mathbb E\,m_q^{N_p}(T_{\mathrm{fin}}\wedge\tau_R)
  \xrightarrow[R\to\infty]{}0 .
\]
Since \(\tau_R\uparrow\tau_\infty\), this proves
\(\tau_\infty>T_{\mathrm{fin}}\) almost surely. As \(T_{\mathrm{fin}}<\infty\) was arbitrary, we have that $\tau_\infty = \infty$ almost surely, and the finite particle
system does not blow up globally.
Finally, \(t\wedge\tau_R\to t\) as $R\to \infty$ almost surely for each fixed \(t\). Fatou's lemma
applied to the stopped bound gives \eqref{eq:finite-time-Mq-bound}.
\end{proof}

\subsection{Proof of Proposition~\ref{prop:wp-finiteN}}
\label{app:sub:wp-finiteN}
\begin{proof}[Proof of Proposition~\ref{prop:wp-finiteN}]
By Lemma~\ref{lem:local-wp}, the particle system admits a unique strong solution up
to the maximal time \(\tau_\infty\). Applying
Corollary~\ref{cor:finite-time-moment} with \(q=2\) gives
\[
  \mathbb P(\tau_\infty\le T_{\mathrm{fin}})=0
  \qquad
  \text{for every } T_{\mathrm{fin}}<\infty .
\]
Hence \(\tau_\infty=\infty\) almost surely, and the local strong solution is global.
The global pathwise uniqueness follows from the uniqueness in
Lemma~\ref{lem:local-wp}, since the maximal lifetime is almost surely infinite.
\end{proof}

\fi


\bibliographystyle{unsrt}
\bibliography{main}
\end{document}